\def\pv#1{\ensuremath{{\mathsf{#1}}}}
\def\F#1{\ensuremath\widehat{#1^\ast}}
\def\FG#1{\ensuremath\widehat{F_G}(#1)}
\def\Syn{\ensuremath{M}}
\newtheorem{Thm}{Theorem}[section]
\newtheorem{Prop}[Thm]{Proposition}
\newtheorem{Lemma}[Thm]{Lemma}
\newtheorem{Cor}[Thm]{Corollary}
\newtheorem{Example}[Thm]{Example}
\theoremstyle{remark}
\newtheorem{Rmk}[Thm]{Remark}}
\numberwithin{equation}{section}
\begin{document}

\title[On the group of a rational maximal bifix code]{On the group of a
  rational maximal bifix code}

\thanks{%
  J.~Almeida and A.~Costa acknowledge partial funding by CMUP
  (UID/MAT/ 00144/2013) and CMUC (UID/MAT/00324/2013), respectively,
  which are funded by FCT (Portugal) with national (MCTES) and
  European structural funds (FEDER) under the partnership agreement
  PT2020. %
  The work of J.~Almeida was carried out in part at Masaryk
  University, whose hospitality is gratefully acknowledged, with the
  support of the FCT sabbatical scholarship SFRH/BSAB/142872/2018.}

\author[J.~Almeida]{Jorge Almeida}
\address{CMUP, Departamento de Matem\'atica,
     Faculdade de Ci\^encias, Universidade do Porto, 
 Rua do Campo Alegre 687, 4169-007 Porto, Portugal.}
\email{jalmeida@fc.up.pt}

\author[A.~Costa]{Alfredo Costa}
\address{CMUC, Department of Mathematics, University of Coimbra,
  Apartado 3008, EC Santa Cruz, 3001-501 Coimbra, Portugal.}
\email{amgc@mat.uc.pt}

\author[R.~Kyriakoglou]{Revekka Kyriakoglou}
\address{LIGM, Universit\'e Paris-Est}
\email{krevekka@hotmail.gr}

\author[D.~Perrin]{Dominique Perrin}
\address{LIGM, Universit\'e Paris-Est}
\email{dominique.perrin@esiee.fr}

\subjclass[2010]{20M05, 20E18, 37B10, 68R15}

\keywords{maximal bifix code, rational code, group code, syntactic monoid, $F$-group,
  Sch\"utzenberger group, uniformly recurrent set, free profinite monoid}

\begin{abstract}
  We give necessary and sufficient conditions for
  the group of a rational maximal bifix code $Z$ to be isomorphic
  with the $F$-group of \mbox{$Z\cap F$}, when $F$ is recurrent and
  $Z\cap F$ is rational. The case where
  $F$ is uniformly recurrent, which is known to imply the finiteness of
  \mbox{$Z\cap F$}, receives special attention.
  The proofs are done by exploring the connections with the structure of the free profinite monoid over the alphabet of $F$.
\end{abstract}

\maketitle

\section{Introduction}

Maximal bifix codes (and, more generally, maximal prefix codes and maximal suffix codes) play a central role in the theory of codes~\cite{Berstel&Perrin&Reutenauer:2010}. In the past few years,
special attention has been given to (bifix, prefix, suffix) codes
which may not be maximal but are maximal within some language,
usually
a recurrent or uniformly recurrent language. This line of research has produced new and strong connections between bifix codes, subgroups of free groups and symbolic dynamical systems
(see the survey~\cite{Perrin:2018}
and the series of papers~\cite{Berstel&Felice&Perrin&Reutenauer&Rindone:2012,Berthe&Felice&Dolce&Leroy&Perrin&Reutenauer&Rindone:2015,Berthe&Felice&Dolce&Leroy&Perrin&Reutenauer&Rindone:2015b,Berthe&Felice&Dolce&Leroy&Perrin&Reutenauer&Rindone:2015c,Berthe&Felice&Dolce&Leroy&Perrin&Reutenauer&Rindone:2015d}).

If $Z$ is a thin maximal bifix code and $F$ is a recurrent set, then
$X=Z\cap F$ is an $F$-maximal bifix code, that is,
a maximal bifix code within $F$~\cite{Berstel&Felice&Perrin&Reutenauer&Rindone:2012}. Moreover, $X$ is finite if
$F$ is uniformly recurrent.
One can informally speak of a  process of relativization, or localization,
going from maximal bifix codes to $F$-maximal bifix codes, when $F$ is recurrent.
Inevitably, the study of $F$-maximal bifix codes
leads to a process of relativization of several previously known
notions and results about maximal bifix codes.
This paper is focused in one of these notions. The group of a rational code $Z$, denoted by $G(Z)$, is the Sch\"utzenberger group of the minimum ideal of
the syntactic monoid $\Syn(Z^\ast)$ of $Z^\ast$. This group is an important parameter in the study of rational codes~\cite{Berstel&Perrin&Reutenauer:2010}.
The relativization of this parameter consists in taking
the intersection $X=Z\cap F$ and the Sch\"utzenberger group of the minimum $\mathcal J$-class
that intersects the image of $F$ in the syntactic monoid of
$X^\ast$. This group, denoted by $G_F(X)$, is the \emph{$F$-group of $X$}.
How are $G(Z)$ and $G_F(X)$ related? They are not always isomorphic,
even if $Z$ is a group code and $F$ is uniformly recurrent.
In~\cite{Berstel&Felice&Perrin&Reutenauer&Rindone:2012} it is shown that
if $Z$ is a group code  and $F$ is a Sturmian set, then
$G(Z)$ and $G_F(X)$ are isomorphic. This result was extended
to arbitrary uniformly recurrent tree sets in the manuscript~\cite{Kyriakoglou&Perrin:2017}. That was done with a novel approach consisting in exploring and applying links between $G(Z)$, $G_F(X)$
and the Sch\"utzenberger (profinite) group $G(F)$
of the minimum $\mathcal J$-class $J(F)$
of the topological closure of $F$ within the free profinite monoid generated by the alphabet of~$F$, and by taking advantage, with the help of these links,
of results on $G(F)$ established in~\cite{Almeida&ACosta:2013}
and~\cite{Almeida&ACosta:2016b}.

Free profinite monoids have proved in the last few decades to be of major importance in the study
of formal (rational) languages~\cite{Almeida:1994a,Rhodes&Steinberg:2009qt}.
Their elements, called pseudowords, can be seen as generalizations of words,
but the algebraic structure of free profinite monoids is much richer than
that of free monoids. The first author established a connection
with symbolic dynamics~\cite{Almeida:2004a} which led to research on
the $\mathcal J$-classes of the form $J(F)$, when $F$ is recurrent, and of their maximal subgroups,
thereby elucidating structural aspects of free profinite monoids~\cite{Almeida:2004a,ACosta&Steinberg:2011,Almeida&ACosta:2013,Almeida&ACosta:2016b}.
The approach followed in~\cite{Kyriakoglou&Perrin:2017}
indicates that it is
worthwhile to
extend to the theory of codes this connection
between free profinite monoids and recurrent sets. 
In this paper we corroborate this, by further exploring the relationship between $G(Z)$, $G_F(X)$ and~$G(F)$. We do it
inspired by~\cite{Kyriakoglou&Perrin:2017}, but without
depending on results first appearing in that~manuscript.

Our main result (Theorem~\ref{t:group-code-isomor-maximal-subgroups}) gives necessary and sufficient conditions for the isomorphism $G(Z)\simeq G_F(X)$
when $Z$ is a rational maximal bifix code, $F$ is recurrent, and $Z\cap F$ is rational.
When $Z$ is a group code and $F$ is uniformly recurrent, we recover the
corresponding result from the manuscript~\cite{Kyriakoglou&Perrin:2017}, which in this paper is slightly improved (cf.~Theorem~\ref{t:uniformly-recurr-version-group-code-isomor-maximal-subgroups}).
Moreover, we deduce
the isomorphism $G(Z)\simeq G_F(X)$
when $Z$ is a group code and $F$ is a uniformly recurrent connected set (cf.~Corollary~\ref{t:equal-degree-connected-set-group-code}).
These results are framed under the new notion of \emph{$F$-charged code}.

The paper is organized as follows. Following the introduction, we
have a section of preliminaries.
It is divided in several subsections, in order to
encompass codes, recurrent sets, free profinite monoids and syntactic monoids, and connections between all of these. The section contains some preparatory results
needed for our main contributions.
In the third section, we further develop the extension to pseudowords,
initiated in~\cite{Kyriakoglou&Perrin:2017}, of the key notion of parse of a word, considering now arbitrary rational codes. In particular, we study the continuity
of the function that counts the number of parses, with respect to the discrete topology of the set of natural numbers. This facilitates
the development of the fourth section, which presents the main results.
In this section, preference
 was given to the more algebraic
 characterization of
 the syntactic monoid as the monoid of classes of words with the same contexts,
 and its variation in terms of pseudowords (cf.~Lemma~\ref{l:syntactic-inequality-profinite-version}).
 In the manuscript~\cite{Kyriakoglou&Perrin:2017} the characterization of the syntactic monoid as the transition monoid of the minimal automaton seems to be more notorious. That perspective
 is also explored in the fifth section of this paper,
 where we look at $G(Z)$ and $G_F(X)$
 as permutation groups and establish conditions
 under which they are equivalent.

\section{Preliminaries}
\label{sec:preliminaries}

\subsection{Codes contained in factorial sets of words}

Along this paper, all alphabets are finite, $A^*$ denotes
the free monoid generated by the alphabet~$A$, the empty word is denoted by $1$, and
$A^+=A^*\setminus\{1\}$ is the free semigroup generated by~$A$. 

Recall that a \emph{code} of $A^*$ is a subset $X$ of $A^+$ such that
the submonoid of $A^*$ generated by $X$ is freely generated by $X$.

Let $X$ be a nonempty subset of $A^+$.
If $X\cap XA^+=\emptyset$,
then $X$ is a code, and it is said to be a \emph{prefix code}.
Dually, if $A^+X\cap X=\emptyset$, then 
$X$ is a \emph{suffix code}.
A \emph{bifix code} is a code that is simultaneously a prefix and a suffix code.

See~\cite{Berstel&Perrin&Reutenauer:2010} for a systematic study of codes. 
For the purposes of this paper, the paper~\cite{Berstel&Felice&Perrin&Reutenauer&Rindone:2012} is also a useful reference.

Let $F$ be a nonempty subset of $A^*$. A prefix code $X$ is an \emph{$F$-maximal prefix code}
if $X\subseteq F$ and $X$ is not properly contained in any other prefix code contained in $F$. Replacing the word ``prefix'' by ``suffix'' or by ``bifix'',
we obtain the notions of \emph{$F$-maximal suffix code} and
\emph{$F$-maximal bifix code}, respectively.
A subset $X$ contained in $F$
is \emph{right $F$-complete}
if every element of $F$ is a prefix of an element of
$X^\ast$. The next proposition
is an immediate application of the combination of Propositions 3.3.1
and 3.3.2 from~\cite{Berstel&Felice&Perrin&Reutenauer&Rindone:2012},
which in turn are extensions of results from~\cite{Berstel&Perrin&Reutenauer:2010}
established for the special case $F=A^*$.
Recall that a subset $F$ of $A^*$  is \emph{factorial} if it contains
every word of $A^*$ that is a factor of at least one element of $F$.

\begin{Prop}\label{p:right-F-complete}
  Suppose that $F$ is a factorial subset of $A^*$, and let $X$
  be a prefix code contained in $F$. Then $X$ is an $F$-maximal prefix code
  if and only if $X$ is right $F$-complete.
\end{Prop}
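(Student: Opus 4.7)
\medskip

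The plan is to prove both implications directly, with the nontrivial work lying in the forward direction.

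For the implication that right $F$-completeness implies $F$-maximality, I would argue by contradiction. Suppose some prefix code $Y\subseteq F$ properly contains $X$ and pick $y\in Y\setminus X$. By right $F$-completeness, $y$ is a prefix of some product $x_1x_2\cdots x_n$ with each $x_i\in X$. Since $y$ is nonempty, $y$ and $x_1$ are prefix-comparable, and both belong to $Y$ (because $X\subseteq Y$). The prefix-code property of $Y$ then forces $y=x_1\in X$, a contradiction. This step is short and should present no difficulty.

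For the converse, I would prove the contrapositive: if $X$ is not right $F$-complete, then it is properly contained in a prefix code within $F$. Pick $w\in F$ which is not a prefix of any element of $X^\ast$, and consider the longest prefix $p$ of $w$ that \emph{is} a prefix of some element of $X^\ast$; such a $p$ exists since the empty word qualifies, and $p\neq w$ by hypothesis. Writing $w=pav$ with $a\in A$, factoriality of $F$ gives $pa\in F$. Decompose $p=x_1\cdots x_k\,q$ with $x_i\in X$ and $q$ either empty or a proper prefix of some element of $X$. The candidate word I would adjoin to $X$ is then $qa$ (which reduces to the letter $a$ when $q=1$), and I would verify by case analysis that $X\cup\{qa\}$ is still a prefix code contained in $F$.

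The technical core of the proof is checking those two incomparability conditions. On one side, if $qa$ were a prefix of some $x\in X$, then $pa=x_1\cdots x_k\,qa$ would be a prefix of $x_1\cdots x_k\,x\in X^\ast$, contradicting the maximality of $p$; this is the key use of the choice of $p$. On the other side, if some $x\in X$ were a prefix of $qa$, then either $x$ is a prefix of $q$ — which, combined with $q$ being a proper prefix of some element of $X$, violates the prefix-code property of $X$ — or $x=qa$, which would place $pa$ itself in $X^\ast$ and again contradict maximality of $p$. The main obstacle is handling the degenerate case $q=1$ cleanly (where the candidate is just the letter $a$ and one must rule out that any element of $X$ begins with $a$), and bookkeeping throughout to ensure that every ingredient, in particular the adjoined word, lies in $F$ thanks to factoriality.
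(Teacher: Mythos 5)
Your proof is correct. Note, however, that the paper itself gives no argument for this proposition: it simply declares it ``an immediate application of the combination of Propositions 3.3.1 and 3.3.2'' of Berstel, De Felice, Perrin, Reutenauer and Rindone (2012), which are themselves relativizations of classical results from \emph{Codes and Automata}. What you have written is a self-contained reconstruction of that standard argument. The easy direction (right $F$-completeness forces maximality, via prefix-comparability of $y$ and $x_1$ inside the larger prefix code $Y$) is exactly the classical one. For the converse your construction --- take a witness $w\in F$ of non-completeness, let $p$ be its longest prefix lying in the prefix closure of $X^\ast$, write $p=x_1\cdots x_k q$ with $q$ a proper prefix of a codeword, and adjoin $qa$ where $a$ is the next letter of $w$ --- is sound: the two incomparability checks you describe do go through, the maximality of $p$ is used in precisely the right places (to exclude $qa$ being a prefix of a codeword and to exclude $qa\in X$), the degenerate case $q=1$ is handled, and factoriality of $F$ guarantees $qa\in F$ since $qa$ is a factor of $pa$, itself a prefix of $w\in F$. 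The only thing worth making explicit in a written-up version is the existence of the decomposition $p=x_1\cdots x_kq$ (take $k$ maximal with $x_1\cdots x_k$ a prefix of $p$ inside a fixed element of $X^\ast$ extending $p$), which is routine. What your approach buys is independence from the external reference; what the paper's citation buys is brevity, at the cost of sending the reader elsewhere.
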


There is a dual definition of \emph{left $F$-complete} subset of $A^*$, and
a result for suffix codes that is dual to Proposition~\ref{p:right-F-complete}.

\subsection{Recurrent and uniformly recurrent sets}

We say that a factorial subset $F$ of $A^*$ is \emph{recurrent} if
$F\neq \{1\}$ (in the definition given in~\cite{Berstel&Felice&Perrin&Reutenauer&Rindone:2012}
one may have $F=\{1\}$) and for every $u,v\in F$ there is $w\in F$ such that $uwv\in F$.
A recurrent set is said to be \emph{uniformly recurrent} if, for every $u\in F$,
there is a positive integer $n$ such that $u$ is a factor of every
element of $F$ with length at least $n$.
A special case of a uniformly recurrent set is that of
a \emph{periodic set}, that is a set which is the set
of factors of a language of the form  $u^\ast$, with $u$ a nonempty word.
These notions appear in the field of symbolic dynamics,
for which we give \cite{Lind&Marcus:1996,Lothaire:2001,Fogg:2002}
as references.
Indeed, a subset $F$ of $A^*$ is recurrent (respectively, uniformly recurrent) if and only if it is the language of finite words appearing in the
elements of an irreducible symbolic dynamical system
(respectively, of a minimal symbolic dynamical system)
of $A^{\mathbb Z}$, and $F$ is periodic
when the corresponding symbolic dynamical system is periodic.

We briefly describe an important mechanism for producing
uniformly recurrent sets. A \emph{substitution}
$\varphi$ over a finite alphabet $A$
is an endomorphism of $A^*$.
If there is $n$ such that, for all $a\in A$, every letter of $A$ is
a factor of $\varphi^n(a)$, then $\varphi$ is said to be \emph{primitive}.
If $\varphi$ is primitive and not the identity on the free monoid over a one-letter alphabet,
then the set $F(\varphi)$ of factors of elements of $\{\varphi^k(a)\mid a\in A,k\geq 1\}$ is a uniformly recurrent subset of $A^*$.

\begin{Example}\label{eg:fibonacci-set}
  Let $A=\{a,b\}$ and let $\varphi$ be the substitution
  over $A$ given by $\varphi(a)=ab$ and $\varphi(b)=a$.
  This substitution is primitive. It is called
  the \emph{Fibonacci substitution}.
  The uniformly recurrent set $F(\varphi)$ is
  the \emph{Fibonacci~set}.
\end{Example}

\begin{Example}\label{eg:tribonacci-set}
  A related example is the \emph{Tribonacci substitution},
  the substitution $\psi$ over $A=\{a,b,c\}$,
  defined by $\psi(a)=ab$, $\psi(b)=ac$ and $\psi(c)=a$.
  The corresponding uniformly recurrent set
  $F(\psi)$ is the \emph{Tribonacci~set}.
\end{Example}

\subsection{The extension graph}

Consider a recurrent subset $F$ of $A^*$.
Given $w\in F$, let
\begin{align*}
  L(w)&=\{a\in A\mid aw\in F\},\\
  R(w)&=\{a\in A\mid wa\in F\},\\
  E(w)&=\{(a,b)\in A\times A\mid awb\in F\}.
\end{align*}
The \emph{extension graph $G(w)$} is the bipartite undirected graph whose
vertex set is the union of disjoint copies of $L(w)$ and $R(w)$,
and whose edges are the pairs $(a,b)\in E(w)$, with incidence in
$a\in L(w)$ and $b\in R(w)$.
One says that $F$ is a~\emph{tree set}
if $G(w)$ is a tree for every $w\in F$.
If $G(w)$ is connected for every $w\in F$, then one
says that $F$ is \emph{connected}.

The class of uniformly recurrent tree sets contains the
extensively studied class of \emph{Arnoux-Rauzy sets}. 
The Arnoux-Rauzy sets over two letters, of which the Fibonacci set
is an example, are the well known \emph{Sturmian sets}.
The Tribonacci set is also an example of an Arnoux-Rauzy set.
See the survey \cite{Glen&Justin:2009} and the research paper~\cite{Berthe&Felice&Dolce&Leroy&Perrin&Reutenauer&Rindone:2015} (note
that in \cite{Berthe&Felice&Dolce&Leroy&Perrin&Reutenauer&Rindone:2015}
the Arnoux-Rauzy sets are called Sturmian).

\begin{Example}
  Here is an example, taken from~\cite{Dolce&Perrin:2018},  of a uniformly recurrent connected set
  which is not a tree set.
  Take the Tribonnaci set
  $F=F(\psi)$, as in Example~\ref{eg:tribonacci-set}.
  Consider the morphism $\alpha$
  given by $\alpha(a)=\alpha(b)=a$ and $\alpha(c)=c$.
  Since identifying letters clearly transforms
  connected extensions graphs into connected extension graphs,
  $\alpha(F)$ is connected, but, as observed
  in~\cite[Example 3.6]{Dolce&Perrin:2018},
  the extension graph of $a^3$ in $\alpha(F)$ is not a tree (it is the complete bipartite graph $K_{2,2}$).
\end{Example}

\subsection{Parses}

A \emph{parse} of a word $w$ with respect to a subset $X$ of $A^*$
is a triple~$(v,x,u)$
such that $w=vxu$ with $v\in A^*\setminus A^* X$, $x\in X^\ast$
and $u\in A^*\setminus X A^*$.
The number of parses of $w$ with respect to $X$ is denoted by~$\delta_X(w)$.

\begin{Rmk}\label{rmk:number-of-special-prefixes}
  For every set $X$,  the integer $\delta_X(w)$ is greater than or equal to the number of prefixes of $w$ not in $A^* X$.
  Equality holds when $X$ is a prefix code (cf.~\cite[Proposition 6.1.6]{Berstel&Perrin&Reutenauer:2010}).
\end{Rmk}

\begin{Lemma}[{cf.~\cite[Subsection 4.1]{Berstel&Felice&Perrin&Reutenauer&Rindone:2012}}]\label{l:parse-inequalities}
  If $X$ is a bifix code, then the inequality $\delta_X(v)\leq \delta_X(uvw)$
  holds, for every $u,v,w\in A^*$.
\end{Lemma}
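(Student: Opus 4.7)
The plan is to reduce the claim to a trivial monotonicity argument via Remark~\ref{rmk:number-of-special-prefixes} and its suffix-code dual.

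First, since $X$ is a prefix code, Remark~\ref{rmk:number-of-special-prefixes} gives that $\delta_X(y)$ counts the prefixes of $y$ that do not lie in $A^{\ast}X$, for every $y\in A^{\ast}$. A completely symmetric argument (alternatively, applying the reversal bijection on words, which preserves $\delta_X$ and exchanges prefix codes with suffix codes) yields that, since $X$ is also a suffix code, $\delta_X(y)$ counts the suffixes of $y$ that do not lie in $XA^{\ast}$. These two reformulations are the only inputs to the argument.

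Second, I would record the obvious monotonicity: if $y_1$ is a prefix of $y_2$, then every prefix of $y_1$ is a prefix of $y_2$, and membership in $A^{\ast}X$ is an intrinsic property of the word, not of the ambient word. Hence the set of prefixes of $y_1$ not in $A^{\ast}X$ is contained in the corresponding set for $y_2$, so $\delta_X(y_1)\le\delta_X(y_2)$ by the prefix-code reformulation. Applied to $y_1=v$ and $y_2=vw$, this gives $\delta_X(v)\le\delta_X(vw)$. Dually, the suffix-code reformulation together with the same monotonicity reasoning shows that $\delta_X$ does not decrease under suffix extension; applied to $vw$ as a suffix of $uvw$, it yields $\delta_X(vw)\le\delta_X(uvw)$. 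Chaining the two inequalities gives $\delta_X(v)\le\delta_X(uvw)$.

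There is no real obstacle in this argument; the only point that requires a line of justification is the suffix-code analogue of Remark~\ref{rmk:number-of-special-prefixes}, which is immediate from the reversal bijection (or directly from Propositions~3.3.1--3.3.2 of \cite{Berstel&Felice&Perrin&Reutenauer&Rindone:2012}, read on the opposite monoid). In particular, the statement does not require constructing an explicit injection between parse triples of $v$ and parse triples of $uvw$: passing through the prefix/suffix reformulation makes such combinatorial bookkeeping unnecessary.
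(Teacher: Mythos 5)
Your proof is correct. The paper gives no proof of this lemma---it only cites \cite[Subsection 4.1]{Berstel&Felice&Perrin&Reutenauer&Rindone:2012}---and your argument (identifying $\delta_X(y)$ with the number of prefixes of $y$ not in $A^*X$ via Remark~\ref{rmk:number-of-special-prefixes}, invoking the suffix-code dual, and then chaining the two obvious monotonicity inequalities $\delta_X(v)\le\delta_X(vw)\le\delta_X(uvw)$) is exactly the standard argument given in that reference.
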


Consider a factorial subset $F$ of $A^*$.
The \emph{$F$-degree} of $X$, denoted by $d_F(X)$,
is the supremum of the set $\{\delta_X(w)\mid w\in F\}$. It may be infinite.
The \emph{degree} of $X$ is $d_{A^*}(X)$, usually denoted by $d(X)$.

In this paper, we pay special attention
to bifix codes with finite $F$-degree, where $F$ is recurrent.
A characterization of such codes is given in the next theorem, taken from~\cite{Berstel&Felice&Perrin&Reutenauer&Rindone:2012}. For its statement, we need some more definitions. When $X,F\subseteq A^*$, one says that
$X$ is \emph{$F$-thin} if there is a word in $F$ that
is not a factor of an element of $X$; a \emph{thin set}
is an $A^*$-thin set. A word $u\in A^*$ is an
\emph{internal factor} of
a word $w\in A^*$ if $w\in A^+uA^+$.

\begin{Thm}[{cf.~\cite[Theorem 4.2.8]{Berstel&Felice&Perrin&Reutenauer&Rindone:2012}}]\label{t:characterization-of-finite-degree-codes}
  Let $F$ be a recurrent set and let $X$ be a bifix code contained in $F$.
  The $F$-degree of $X$ is finite
  if and only if $X$ is an $F$-thin and $F$-maximal bifix code.
  In this case, a word $w\in F$
  is an internal factor of a word of $X$
  if and only if $\delta_X(w)<d_F(X)$.
\end{Thm}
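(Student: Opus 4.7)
The plan is to express $\delta_X(w)$ via the bifix characterization of Remark~\ref{rmk:number-of-special-prefixes} (for $X$ bifix, $\delta_X(w)$ equals the number of prefixes of $w$ not in $A^*X$, and dually the number of suffixes not in $XA^*$), and to combine this with the monotonicity Lemma~\ref{l:parse-inequalities} and the recurrence of~$F$. The overall strategy is to prove both equivalences largely by contrapositive, exploiting recurrence to embed any word of $F$ into larger words whose parse structure can be controlled.

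For the direction $d_F(X)<\infty \Rightarrow X$ is $F$-thin and $F$-maximal, I would argue as follows. If $X$ is not $F$-thin, every $w\in F$ is a factor of some $y\in X\subseteq F$; picking $w$ with $\delta_X(w)=d_F(X)$ and using recurrence to force a factorisation $y=\alpha w\beta$ with $\alpha,\beta\neq 1$, Lemma~\ref{l:parse-inequalities} together with an extra prefix of $y$ outside $A^*X$ contributed by $\alpha$ (bifix-freeness preventing the new prefix from being absorbed into $A^*X$) yields $\delta_X(y)>\delta_X(w)=d_F(X)$, a contradiction. If $X$ is not $F$-maximal bifix, Proposition~\ref{p:right-F-complete} supplies a word of $F$ that is not a prefix of any element of $X^*$; concatenating such words inside $F$ via recurrence produces elements of $F$ with arbitrarily many special prefixes, so $d_F(X)=\infty$. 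For the converse, let $u\in F$ fail to be a factor of any element of $X$, and let $w\in F$; by recurrence, embed $w$ in some $w'\in F$ flanked by copies of $u$. In any parse $(v,x_1\cdots x_n,s)$ of $w'$, no $x_i$ contains $u$, so the two extremal occurrences of $u$ in $w'$ must lie inside the outer $v,s$ pieces or straddle the boundaries between consecutive $x_i$; this bounds $\delta_X(w')$, and hence $\delta_X(w)$, by a function of $|u|$ alone.

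For the internal-factor characterization, assume $d=d_F(X)<\infty$. If $w$ is internal to $y=\alpha w\beta\in X$ with $\alpha,\beta\neq 1$, the parses of $w$ correspond injectively to parses of $y$ (via padding $(v,x,s)$ to $(\alpha v,x,s\beta)$ and trimming at the bifix boundary to restore the parse condition), while the distinguished parse $(1,y,1)$ of $y$ lies outside this image because $1\notin \alpha A^*$ when $\alpha\neq 1$; hence $\delta_X(w)<\delta_X(y)\leq d$. Conversely, if $w\in F$ is not internal to any element of $X$, pick $w_0\in F$ with $\delta_X(w_0)=d$ and, via recurrence, build $w_1\in F$ containing $w_0$ as a factor and having $w$ strictly interior. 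No $X$-factor appearing in a parse of $w_1$ can span the central occurrence of $w$ (otherwise $w$ would be internal to such a factor), so restricting each parse of $w_1$ to the window of $w$ produces a parse of $w$ injectively, giving $\delta_X(w)\geq\delta_X(w_1)\geq d$ and hence equality. The main difficulty I anticipate lies precisely in the lifting and restriction of parses: both the trimming step and the restriction step depend delicately on bifix-freeness to prevent two distinct $X$-factors from colliding at the relevant boundary, and the careful case analysis needed to verify injectivity of the lift and to exhibit the parse $(1,y,1)$ as genuinely new constitutes the technical core of the argument.
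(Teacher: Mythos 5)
First, a point of order: the paper does not prove this theorem --- it is imported with the citation \cite{Berstel&Felice&Perrin&Reutenauer&Rindone:2012} in the theorem header --- so there is no internal proof to compare against and I am judging your argument on its own merits. The serious gap is that in the direction ``$F$-thin and $F$-maximal $\Rightarrow d_F(X)<\infty$'', and again in the converse half of the internal-factor characterization, you never use $F$-maximality, and both arguments are false without it. Concretely, take $F=\{a,b\}^*$, $X=\{aa\}$ (a bifix code, $F$-thin since $b$ is not a factor of $aa$) and $u=b$: your converse argument applies verbatim to $w'=b\,b^n\,b$ (no $x_i$ contains $b$, and the two extremal occurrences of $b$ lie in the outer pieces $v,s$ of every parse), yet $\delta_X(b^{n+2})=n+3$, so $d_F(X)=\infty$. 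Knowing where the flanking copies of $u$ sit relative to the parse boundaries does not bound the number of parses, because arbitrarily many distinct prefixes $v\notin A^*X$ can all cover the left copy of $u$. The missing ingredient is exactly the consequence of $F$-maximality (right and left $F$-completeness, via Proposition~\ref{p:right-F-complete} and its dual --- and note that ``$F$-maximal bifix $\Rightarrow$ $F$-maximal prefix'' is itself a nontrivial theorem for $F$-thin codes, \cite[Thm.~4.2.2]{Berstel&Felice&Perrin&Reutenauer&Rindone:2012}, not the easy contrapositive you correctly use in the other direction): completeness forces every prefix of a word of $F$ lying outside $A^*X$ to be a proper suffix of a word of $X$, hence to avoid $u$ as a factor, and that is what bounds $\delta_X$ by roughly $|u|$. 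The same defect breaks the claimed injectivity of ``restriction to the window of $w$'': with $X=\{aa\}$ and $w=a$ (not an internal factor of any word of $X$), the parses of $aabb$ with first components $aab$ and $aabb$ both restrict to $(a,1,1)$, and indeed $\delta_X(a)=2<4=\delta_X(aabb)$.

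The remaining pieces are sound in outline but under-justified at the point you yourself flag as the technical core. For ``$w$ internal in $y=\alpha w\beta\in X$ implies $\delta_X(w)<\delta_X(y)$'', your reason that $(1,y,1)$ is missed, namely ``$1\notin\alpha A^*$'', does not survive the trimming step: the trimmed first component need not lie in $\alpha A^*$. What is true is that it is always \emph{nonempty}, because if it vanished then $\alpha v$ would lie in $X^+$ and $y$ would have a proper prefix in $X$ (here both $\beta\neq1$ and the prefix-code property are needed); likewise the injectivity of the padded-and-trimmed lift requires the right-unitarity of $X^*$. A shorter route avoiding the lift altogether: $\alpha w\notin XA^*$, since any $X$-prefix of $\alpha w$ would be a proper $X$-prefix of $y$, so the suffix-counting dual of Remark~\ref{rmk:number-of-special-prefixes} gives $\delta_X(\alpha w)\geq\delta_X(w)+1$ directly. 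Your reductions via recurrence (flanking $w$ by letters to make it internal, and the chain $w\mapsto wzu$ so that only one of the two one-sided bounds is needed) are fine.
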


We turn our attention to rational bifix codes.

\begin{Prop}\label{p:drop-F-thin}
  For any recurrent set $F$,
  if $X$ is a rational bifix code contained in~$F$,
  then $X$ is $F$-thin.
\end{Prop}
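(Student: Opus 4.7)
My plan is to argue by contradiction: assume $F\subseteq F(X)$, meaning every word of $F$ is a factor of some element of $X$. By the classical theorem that every rational code is thin (e.g., Proposition~2.5.20 of \cite{Berstel&Perrin&Reutenauer:2010}), there is $w_0\in A^*$ not a factor of any element of $X$, so $F(X)\subsetneq A^*$. Without loss of generality the alphabet of $F$ equals $A$, since $X\subseteq F$ can be viewed as a rational bifix code over the smaller alphabet of $F$. The case where $X$ is finite is immediate: $F(X)$ is then finite while $F$ is infinite (being recurrent with $F\neq\{1\}$), so some word of $F$ is not in $F(X)$.

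Assume henceforth that $X$ is infinite. Fix $x\in X\subseteq F$. By iterated application of recurrence, I build a sequence $w_k\in F$ containing $x$ as a factor at least $k$ times: set $w_1 = x$ and, given $w_k\in F$, apply recurrence to the pair $w_k, x \in F$ to obtain $v_k\in F$ with $w_kv_kx\in F$, and put $w_{k+1}=w_kv_kx$. Under the contradiction hypothesis, each $w_k$ is a factor of some $y_k\in X$, say $y_k = u_kw_kv_k'$, so $X$ contains arbitrarily long elements in which $x$ occurs as a factor arbitrarily often.

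To convert this into a contradiction, I pass to the free profinite monoid $\widehat{A^*}$: let $\overline F$ denote the closure of $F$ and $J(F)$ its minimum $\mathcal J$-class, which is nonempty because $F$ is recurrent. By compactness of $\widehat{A^*}$, extract a subnet along which $w_k\to p$, $u_k\to u$, $v_k'\to v$, and $y_k\to y$; by continuity of multiplication, $y = u\,p\,v$ in $\widehat{A^*}$. Since $X$ is rational, $\overline X$ is clopen in $\widehat{A^*}$, so $y\in\overline X$. Moreover, in any finite-monoid quotient of $\widehat{A^*}$ the sequence $\pi(w_k)$ is $\mathcal J$-decreasing (since $w_k$ is a factor of $w_{k+1}$), hence stabilizes in the minimum $\mathcal J$-class of $\pi(F)$, which forces $p\in J(F)$. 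Combined with $y\le_{\mathcal J}p$ and $y\in\overline X\subseteq\overline F$, the minimality of $J(F)$ in $\overline F$ then gives $y\in J(F)$.

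The main obstacle is the final step: to reach a contradiction I must rule out $y\in\overline X\cap J(F)$. The underlying claim is that $\overline X\cap J(F) = \emptyset$ for any rational bifix code $X\subseteq F$ with $F$ recurrent. Intuitively, the bifix structure together with rationality controls how $X$-factorizations can accumulate in $\widehat{A^*}$, whereas elements of $J(F)$, by the $\mathcal J$-minimality of their class in $\overline F$, correspond to pseudowords of ``maximal parse complexity'' incompatible with being in the closure of a bifix code. Making this incompatibility rigorous is the technical heart of the argument; it rests on the continuity properties of the $X$-parse counting function (bounded on $\overline X$ but exhibiting unbounded growth along any approximation of an element of $J(F)$), properties that will be developed systematically in the following section of the paper.
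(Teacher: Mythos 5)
Your argument has a genuine gap, in fact two. First, the step asserting $p\in J(F)$ does not follow from your construction: the sequence $w_{k+1}=w_kv_kx$ only guarantees that the fixed word $x$ occurs at least $k$ times in $w_k$, so an accumulation point $p$ of $(w_k)$ need not have every element of $F$ as a factor and hence need not lie in $J(F)$ (a $\mathcal J$-decreasing sequence in a finite quotient stabilizes, but not necessarily in the minimum $\mathcal J$-class of the image of $F$). This could be repaired by interleaving an enumeration of $F$ into the recurrence construction, but as written the claim is false. Second, and more seriously, the entire proof hinges on the unproved claim that $\overline{X}\cap J(F)=\emptyset$, which you explicitly defer to the parse-counting machinery of Section~3. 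That deferral is circular in the logic of this paper: the continuity of $\delta_X$ (Proposition~\ref{p:the-parse-is-continuous}) and Lemma~\ref{l:degree-in-JF} are stated for codes of \emph{finite $F$-degree}, and by Theorem~\ref{t:characterization-of-finite-degree-codes} finiteness of the $F$-degree is equivalent to $X$ being $F$-thin and $F$-maximal --- which is precisely what Proposition~\ref{p:drop-F-thin} is needed to establish (cf.\ Remark~\ref{rmk:rational-codes-are-thin}). So the ``technical heart'' you postpone cannot be supplied by the later sections without a vicious circle.

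For comparison, the paper's proof is entirely elementary and stays inside $A^*$: it shows there is a word of $X$ that is not a \emph{proper} factor of any word of $X$ (which suffices, since by recurrence such a word extends to a word of $F$ that then cannot be a factor of any element of $X$). Assuming otherwise yields a sequence $u_{n+1}=x_nu_ny_n$ in $X$ of strictly increasing lengths; Proposition~3.2.9 of \cite{Berstel&Perrin&Reutenauer:2010} for rational prefix codes forces $y_n=1$ eventually, and the suffix-code property then forces the sequence to be eventually constant, a contradiction. If you want to salvage your profinite approach, you would need an independent, non-circular proof that $\overline{X}\cap J(F)=\emptyset$ for a rational bifix code $X\subseteq F$; at present that is exactly the missing content of the proposition.
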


\begin{proof}
  It suffices to show that there is a word in $X$ which is not a proper factor of
  a word of $X$. Suppose that that is not the case.
  Then, there is a sequence $(u_n)_n$ of words
  of $X$, with strictly increasing lengths, such that, for every $n\geq 1$,
  one has $u_{n+1}=x_nu_ny_n$ for
  some words $x_n,y_n$.
  Since $X$ is a rational prefix code,
  applying Proposition~3.2.9 from \cite{Berstel&Perrin&Reutenauer:2010},
  we conclude that
  there is $n_0$ such that $y_n=1$ for all $n\geq n_0$.
  And since $X$ is a bifix code, we get $u_n=u_{n_0}$ for all
  $n\geq n_0$, a contradiction.
\end{proof}

\begin{Rmk}\label{rmk:rational-codes-are-thin}
  Combining
  Theorem~\ref{t:characterization-of-finite-degree-codes}
  and Proposition~\ref{p:drop-F-thin}
  we conclude that
  a rational bifix code, contained in a recurrent
  set $F$, has finite $F$-degree if and only if it is an $F$-maximal bifix code.
  In particular, a rational bifix code
  has finite degree if and only if it is a maximal bifix code --- in fact, every rational code is thin~\cite[Proposition 2.5.20]{Berstel&Perrin&Reutenauer:2010}.
\end{Rmk}

We next state some properties of the intersection of
a (uniformly) recurrent set with a thin maximal bifix code (equivalently, a bifix code of finite degree, cf.~Theorem~\ref{t:characterization-of-finite-degree-codes}).

\begin{Thm}[{\cite[Theorems 4.2.11 and 4.4.3]{Berstel&Felice&Perrin&Reutenauer&Rindone:2012}}]\label{t:thin-code-intersects-F}
  Let $Z$ be a maximal bifix code of $A^*$
  with finite degree and let $F$ be a recurrent
  subset of $A^*$. The intersection $X=Z\cap F$ is an $F$-maximal bifix code.
  One has $d_F(X)\leq d(Z)$, with equality if $Z$ is finite. If, moreover, $F$ is uniformly recurrent, then $X$ is finite.
\end{Thm}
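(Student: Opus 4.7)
The plan is to derive everything from Theorem~\ref{t:characterization-of-finite-degree-codes} once a key identity is in hand: for every $w\in F$, the set of parses of $w$ with respect to $X$ coincides with the set of parses of $w$ with respect to $Z$, so in particular $\delta_X(w)=\delta_Z(w)$. The crucial point is that $F$ is factorial, so any element of $Z$ appearing as a factor of some $w\in F$ is itself in $F$, hence in $Z\cap F=X$; combined with the inclusions $X^{*}\subseteq Z^{*}$, $A^{*}X\subseteq A^{*}Z$ and $XA^{*}\subseteq ZA^{*}$, this forces a triple $(v,x,u)$ to be a parse of $w\in F$ with respect to $X$ precisely when it is a parse with respect to $Z$. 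In particular $X$ is bifix and $d_F(X)\leq d(Z)<\infty$, so Theorem~\ref{t:characterization-of-finite-degree-codes} applied to $X$ inside $F$ yields at once that $X$ is both $F$-thin and $F$-maximal bifix.

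For the equality $d_F(X)=d(Z)$ when $Z$ is finite, I would let $N$ denote the maximum length of an element of $Z$. Since $F$ is recurrent and nonempty it is infinite, so one may pick $w\in F$ with $|w|>N$. Such a $w$ cannot be an internal factor of any element of $Z$ for length reasons, and so Theorem~\ref{t:characterization-of-finite-degree-codes} applied to $Z$ inside $A^{*}$ gives $\delta_Z(w)=d(Z)$. Combined with the parse-matching identity $\delta_X(w)=\delta_Z(w)$, this yields the reverse bound $d_F(X)\geq d(Z)$.

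Finally, for the uniformly recurrent case, I would use the finite-degree structure directly. Since $X$ is $F$-thin, there is $u\in F$ that is not a factor of any element of $X$. By uniform recurrence, there is an integer $n$ such that every word of $F$ of length at least $n$ contains $u$ as a factor. Because $X\subseteq F$, no element of $X$ can have length $\geq n$, so $X$ has bounded length and, being over a finite alphabet, is finite. The main subtlety of the whole argument is the parse-matching identity for $w\in F$, where the factoriality of $F$ does the essential work; once it is granted, the remaining assertions reduce to routine applications of Theorem~\ref{t:characterization-of-finite-degree-codes} together with elementary length estimates drawn from recurrence.
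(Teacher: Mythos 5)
The paper does not actually prove this statement --- it is imported verbatim from \cite[Theorems 4.2.11 and 4.4.3]{Berstel&Felice&Perrin&Reutenauer&Rindone:2012} --- so there is no internal proof to compare against; I can only assess your argument on its own terms, and it is correct and self-contained given the tools the paper does state. Your parse-coincidence identity is the right engine: factoriality of $F$ puts every code word of $Z$ occurring inside a word $w\in F$ into $X=Z\cap F$, while the inclusions $X^*\subseteq Z^*$, $A^*X\subseteq A^*Z$ and $XA^*\subseteq ZA^*$ handle the outer components of a parse, so the parses of $w\in F$ with respect to $X$ and to $Z$ literally coincide and $\delta_X(w)=\delta_Z(w)$; the three conclusions then follow from Theorem~\ref{t:characterization-of-finite-degree-codes} and the length arguments exactly as you describe. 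One point you should make explicit: before invoking Theorem~\ref{t:characterization-of-finite-degree-codes} you must know that $X$ is nonempty (the paper's definition of a bifix code requires this, and a priori $Z\cap F$ could be empty). This does follow from your own identity: a recurrent set is infinite, and if $X$ were empty then every one of the $|w|+1$ prefixes of $w$ would lie outside $A^*X$, so $\delta_X(w)\ge |w|+1$ would be unbounded on $F$, contradicting $\delta_X(w)=\delta_Z(w)\le d(Z)<\infty$. With that sentence added, the proof is complete; the finite-$Z$ equality via words longer than every element of $Z$, and the finiteness of $X$ under uniform recurrence via $F$-thinness, are both correct as written.
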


A \emph{group code} of $A^*$ is a subset $Z$ of $A^*$
such that $Z^\ast$ is recognized by a finite \emph{group automaton}, that is,
a trim
automaton whose initial state is the unique final state and such that the action
of each letter of $A$ on the finite set of states is a permutation.
Notice that the group automaton defining a group code $Z$ is
complete, deterministic and reduced, and so it is the minimal automaton of $Z^\ast$. Several properties
of group codes can be found in~\cite[Section 6]{Berstel&Felice&Perrin&Reutenauer&Rindone:2012}. For instance, a group code is a maximal bifix code with degree equal to the number of states of the group automaton defining~it.

The next result is a property which is explained in detail in the proof
of \cite[Theorem 5.10]{Berthe&Felice&Dolce&Leroy&Perrin&Reutenauer&Rindone:2015d}. It is a consequence of the
fact that uniformly recurrent tree sets satisfy the so called
\emph{finite index basis property}, cf.~\cite[Theorem 4.4]{Berthe&Felice&Dolce&Leroy&Perrin&Reutenauer&Rindone:2015c}. 

\begin{Thm}\label{t:equal-degree-tree-set-group-code}
  If $Z$ is a group code of $A^*$ and $F$
  is a uniformly recurrent tree set with alphabet $A$,
  then $d_F(Z\cap F)=d(Z)$.
\end{Thm}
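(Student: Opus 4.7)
The plan is to establish the inequality $d_F(Z\cap F)\geq d(Z)$; the reverse inequality is already given by Theorem~\ref{t:thin-code-intersects-F}, since every group code is a maximal bifix code of finite degree (equal to the number of states of its defining group automaton). Put $X=Z\cap F$. Since $F$ is uniformly recurrent, Theorem~\ref{t:thin-code-intersects-F} guarantees that $X$ is a \emph{finite} $F$-maximal bifix code.

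The key tool will be the finite index basis property of uniformly recurrent tree sets (\cite[Theorem~4.4]{Berthe&Felice&Dolce&Leroy&Perrin&Reutenauer&Rindone:2015c}), which asserts that every finite $F$-maximal bifix code $X\subseteq F$ is a basis of a subgroup of the free group $F_A$ on $A$ whose index in $F_A$ is exactly $d_F(X)$. Applying this to our $X=Z\cap F$, I obtain a subgroup $\langle X\rangle\leq F_A$ of index $d_F(X)$.

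On the other hand, since $Z$ is a group code of degree $d(Z)$, there is a surjective morphism $\pi\colon A^\ast\to G$ onto a finite group $G$ with $|G|=d(Z)$ and $Z^\ast=\pi^{-1}(1)$. The morphism $\pi$ extends to a group morphism $\widetilde\pi\colon F_A\to G$, whose kernel $N$ is a normal subgroup of $F_A$ of index $d(Z)$; and since $Z\subseteq Z^\ast\subseteq N$, we have in particular $X\subseteq N$, hence $\langle X\rangle\leq N$. Comparing indices,
\[
d_F(X)=[F_A:\langle X\rangle]\;\geq\;[F_A:N]=d(Z),
\]
which is the desired inequality.

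The whole argument is short once the finite index basis property is invoked, so there is no serious technical obstacle; the only subtlety worth stressing is that this property is precisely what produces the subgroup whose index coincides with the $F$-degree, thereby allowing a direct index comparison with the kernel $N$ attached to the group code. The nontrivial content of the theorem is entirely absorbed into \cite[Theorem~4.4]{Berthe&Felice&Dolce&Leroy&Perrin&Reutenauer&Rindone:2015c}.
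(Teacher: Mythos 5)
Your overall strategy is exactly the one the paper relies on: the paper states Theorem~\ref{t:equal-degree-tree-set-group-code} without proof as a consequence of the finite index basis property, referring to the proof of \cite[Theorem 5.10]{Berthe&Felice&Dolce&Leroy&Perrin&Reutenauer&Rindone:2015d}, and that argument is precisely the index comparison you carry out. The reduction to $d_F(X)\geq d(Z)$ via Theorem~\ref{t:thin-code-intersects-F}, and the use of \cite[Theorem 4.4]{Berthe&Felice&Dolce&Leroy&Perrin&Reutenauer&Rindone:2015c} to produce the subgroup $\langle X\rangle$ of index $d_F(X)$ in the free group, are both correct.

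There is, however, a genuine error in your description of group codes. It is not true that a group code $Z$ of degree $d(Z)$ satisfies $Z^\ast=\pi^{-1}(1)$ for a surjective morphism $\pi$ onto a group of order $d(Z)$. By definition, $Z^\ast$ is the stabilizer language $\{w\in A^\ast : i\cdot w=i\}$ of the initial state $i$ of a group automaton with $d(Z)$ states; if $G$ is the (transitive) transition group of that automaton and $H$ the stabilizer of $i$ in $G$, then $Z^\ast=\pi^{-1}(H)$ with $d(Z)=[G:H]$, while $|G|$ can be much larger. Example~\ref{eg:exampleThueMorse8} of the paper exhibits a group code with $d(Z)=5$ whose syntactic group is $A_5$, of order $60$; for that code no morphism onto a group of order $5$ recognizes $Z^\ast$, and with your $N=\ker\widetilde\pi$ one does not have $\langle X\rangle\leq N$ in general (only $\langle X\rangle\leq\widetilde\pi^{-1}(H)$), so the displayed index comparison would break. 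The repair is immediate: replace $N$ by the subgroup $\widetilde\pi^{-1}(H)$ of $F_A$, which has index $[G:H]=d(Z)$ and contains $Z$, hence contains $\langle X\rangle$; then $d_F(X)=[F_A:\langle X\rangle]\geq[F_A:\widetilde\pi^{-1}(H)]=d(Z)$ as you intend. With that correction your proof is the intended one.
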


In Section~\ref{sec:f-group-rational} a generalization
of Theorem~\ref{t:equal-degree-tree-set-group-code}
is deduced for arbitrary uniformly recurrent connected sets (cf.~Corollary~\ref{t:equal-degree-connected-set-group-code}),
and actually for a more general setting expressed in Theorem~\ref{t:group-code-isomor-maximal-subgroups}.
Note that the finite index basis property applies only for uniformly recurrent tree sets
(cf.~\cite[Corollary 4.11]{Berthe&Felice&Dolce&Leroy&Perrin&Reutenauer&Rindone:2015c}).

In contrast, we have the following example. It concerns
a uniformly recurrent set which is not connected, but
that belongs to the class of  \emph{eventually tree sets}
(studied in~\cite{Dolce&Perrin:2018}), meaning that the
extension graph  of every sufficiently long word is a tree.\footnote{In~\cite{Dolce&Perrin:2018}, and also in~\cite{Perrin:2018}, the subshifts corresponding to (eventually) tree sets are called~\emph{(eventually) dendric
    subshifts}.}

 \begin{Example}\label{eg:non-forbidden-example}
   Consider the group automaton over the alphabet $A=\{a,b,c,d\}$
   presented in Figure~\ref{fig:group-automaton}, with initial state $1$. 
   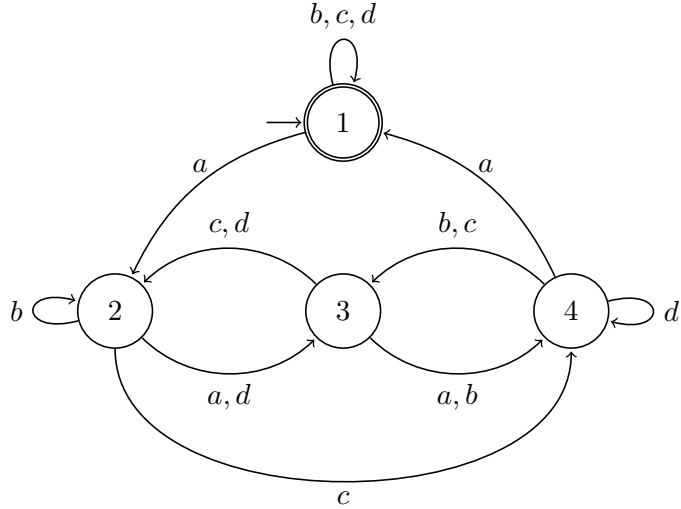
\begin{figure}[h]
     \centering
     \begin{tikzpicture}[shorten >=1pt, node distance=2.5cm and 3cm, on grid,initial text=,semithick]
  \node[state]   (2)                {$2$};
  \node[state]   (3) [right=of 2]   {$3$};
  \node[state]   (4) [right=of 3]   {$4$};
  \node[state,initial,accepting]   (1) [above=of 3]   {$1$};
  \path[->]  (3)   edge  [bend right=45] node [above] {$c,d$} (2)
             (3)   edge  [bend right=45]  node [below] {$a,b$} (4)
             (2)   edge  [bend right=45] node [below] {$a,d$} (3)
             (4)   edge  [bend right=45]  node [above] {$b,c$} (3)
             (1)   edge  [bend right=25]  node [above] {$a$} (2)
             (4)   edge  [bend right=25]  node [above] {$a$} (1)
             (2)   edge  [bend right=90]  node [below] {$c$} (4)
             (1)      edge [loop above]   node         {$b,c,d$} ()
             (2)      edge [loop left]   node         {$b$} ()
             (4)      edge [loop right]   node         {$d$} ();

\end{tikzpicture}
     \caption{A group automaton of degree $4$.}
     \label{fig:group-automaton}
   \end{figure}

   Let $Z$ be the group code defined by this group automaton.
   Consider the set $Y=\{ab,ac,bc,cd,ca,da\}$.
   If $u\in Y^\ast\cup Y^\ast A$,
   then every path in the automaton which is labeled by $u$, and starts at
   state $3$, passes only through states belonging to $\{2,3,4\}$. Hence, every element of $Y^\ast \cup Y^\ast A$
   is an internal factor of an element of $Z$.

    Let $\varphi$ be the substitution over $A$ given
    by
    \begin{equation*}
      \varphi(a)=ab,\quad \varphi(b)=cda,\quad \varphi(c)=cd
      \quad\text{and}\quad\varphi(d)=abc.
    \end{equation*}
    This substitution is borrowed from~\cite[Example 3.4]{Berthe&Felice&Dolce&Leroy&Perrin&Reutenauer&Rindone:2015}.
    Let $F$ be the uniformly recurrent subset
    of $A^*$ defined by $\varphi$.
    If $u\in F\setminus \{1\}$, then $G(u)$
    is a tree, but $G(1)$ is acyclic with
    two connected components, displayed in Figure~\ref{fig:extension-graph}.
    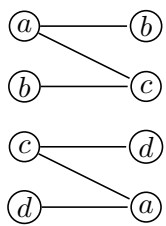
\begin{figure}[h]
      \centering
     \begin{tikzpicture}[shorten >=1pt, node distance=0.8cm and 1.6cm, on grid,initial text=,semithick]
       \tikzstyle{state}=[draw,circle,minimum size=1em,inner sep=1]
  \node[state]   (1)                {$a$};
  \node[state]   (2) [below=of 1]   {$b$};
  \node[state]   (3) [right=of 1]   {$b$};
  \node[state]   (4) [below=of 3]   {$c$};
  \path      (1)   edge (3)
             (1)   edge (4)
             (2)   edge (4);
  \node[state]   (5) [below=of 2]   {$c$};
  \node[state]   (6) [below=of 5]   {$d$};
  \node[state]   (7) [right=of 5]   {$d$};
  \node[state]   (8) [below=of 7]   {$a$};
  \path      (5)   edge (7)
             (5)   edge (8)
             (6)   edge (8);           
     \end{tikzpicture}
     \caption{The extension graph $G(1)$, with $L(1)$ on the left column and
     $R(1)$ on the right column.}
     \label{fig:extension-graph}
    \end{figure}
    Notice that $F\subseteq Y^\ast\cup Y^\ast A$,
    and so every element of $F$ is an internal factor of an element of $Z$.
    Hence, we have $d_F(Z\cap F)<d(Z)$.
 \end{Example}

\subsection{Restriction to rational languages}
 
 In this paper we are concerned with intersections of
 the form $X=Z\cap F$, in which
 $Z$ is a maximal bifix code of $A^*$
 and $F$ is a recurrent subset of~$A^*$, for some finite alphabet $A$.
 Our results restrict to the case where
 both $Z$ and $X$ are rational. The rationality of
 $Z$ implies that of $X$ when the recurrent set $F$ is
 in one of the following two situations: $F$ is rational, or $F$ is uniformly recurrent, and in the second case $X$ is even finite, as seen in Theorem~\ref{t:thin-code-intersects-F} (cf.~Remark~\ref{rmk:rational-codes-are-thin}).
 From the viewpoint of symbolic dynamics, this corresponds to the
 two most studied classes of symbolic dynamical systems: sofic systems (corresponding to rational recurrent sets) and minimal systems (corresponding to uniformly recurrent sets). Out of this realm, it is possible to
 have~$Z$ rational and $X$ not rational,
 as seen in the next example.

 \begin{Example}\label{eg:intersection-not-rational}
   Consider the group code $Z$ whose minimal automaton
   is represented in Figure~\ref{fig:intersection-dyck},
   with initial state $1$.
   \begin{figure}[h]
     \centering
     \begin{tikzpicture}[shorten >=1pt, node distance=3cm, on grid,initial text=,semithick]
  \node[state,initial above,accepting]   (1)                {$1$};
  \node[state]   (2) [right=of 1]   {$2$};
  \node[state]   (3) [right=of 2]   {$3$};
  \path[->]  (1)   edge  [loop left]     node [left]  {$a,b$}
             (1)   edge  [bend right=30] node [below] {$c,d$} (2)
             (2)   edge  [bend right=30] node [above] {$c,d$} (1)
             (2)   edge  [bend right=30] node [below] {$a,b$} (3)
             (3)   edge  [bend right=30] node [above] {$a,b$} (2)
             (3)   edge  [loop right]    node [right] {$c,d$} (3);
\end{tikzpicture}
     \caption{A group automaton of degree $3$.}
     \label{fig:intersection-dyck}
   \end{figure}
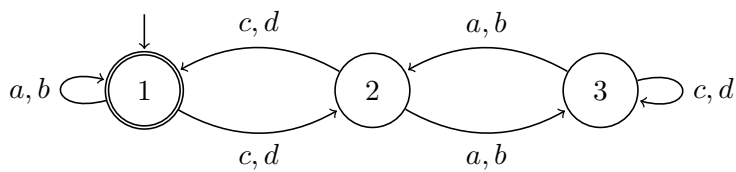
   Suppose that $a$, $b$, $c$ and $d$
   represent respectively the four parentheses
   $``{(}"$, $``{)}"$, $``{[}"$ and $``{]}"$,
   and that $F$ is the recurrent set of factors of words of $\{a,b,c,d\}^\ast$
   that are correctly parenthesized. The set $F$
   defines a \emph{Dyck shift} (a class
   of symbolic dynamical systems going back to~\cite{Krieger:1974}).
   For every positive integer~$n$, the word $ca^nb^nd$ belongs
   to~$X=Z\cap F$. If $X$ were rational, then, by the Pumping Lemma,
   we would have $ca^nb^{n+k}d$
   for some positive integers $n$ and $k$. But such a word
   does not belong to $F$, whence $X$ is not rational.
 \end{Example}

\subsection{The free profinite monoid}

We introduce free profinite monoids,
giving the introductory texts~\cite{Almeida:2003cshort,Pin:2009S}
and the books \cite{Almeida:1994a,Rhodes&Steinberg:2009qt}
as supporting references.
Let $A$ be a finite alphabet.
When $u$ and $v$ are distinct elements of $A^*$, there is
some finite monoid $M$ and some homomorphism $\varphi\colon A^*\to M$
such that $\varphi(u)\neq \varphi(v)$.
Denote by $r(u,v)$ the least possible value for the cardinal of $M$.
The function $d\colon A^*\times A^*\to \mathbb R^+$
such that
\begin{equation*}
  d(u,v)=
  \begin{cases}
    2^{-r(u,v)}&\text{if $u\neq v$}\\
     0 &\text{if $u=v$}
  \end{cases}
\end{equation*}
is a metric. It is in fact an ultrametric: $d(u,v)\leq\max\{d(u,w),d(w,v)\}$,
for all $u,v,w\in A^*$.
The metric space~$A^*$ thus defined admits a completion~$\F A$, which is actually a compact metric space.
The multiplication in $A^*$ is uniformly continuous with respect to $d$,
and so $\F A$ is a topological monoid whose 
multiplication is the unique continuous extension
of the multiplication of~$A^*$.
In particular, $A^*$ is a dense submonoid of $\F A$.
Intuitively, the elements of~$\F A$ can be viewed as generalizations of
words over $A$. The elements of $A^*$ are isolated in the topological space $\F A$. We say that the elements of $\F A$ are \emph{pseudowords} over the alphabet $A$. The elements of $\F A\setminus A^*$ are the \emph{infinite pseudowords}, while those of $A^*$ are the \emph{finite pseudowords}.
The topological monoid $\F A$ is a special
example of a profinite monoid. A \emph{profinite monoid} is a compact\footnote{We adopt the convention that being compact requires being Hausdorff.} monoid $M$ such that, if $u$ and $v$ are distinct elements of $M$, then $\varphi(u)\neq\varphi(v)$ for some continuous homomorphism
$\varphi\colon M\to N$, where $N$ is a finite monoid endowed with the discrete topology. Note that the finite monoids are profinite, if endowed with the discrete topology, as we do from hereon. It turns out that $\F A$ is the \emph{free profinite monoid generated by $A$}, in the following sense: if $\varphi\colon A\to N$ is a mapping into a profinite monoid, then there is a unique extension of $\varphi$
to a continuous homomorphism $\hat\varphi\colon \F A\to N$.

The notion of profinite monoid can be generalized to abstract topological algebras. In particular, we can consider
\emph{profinite semigroups} and \emph{profinite groups}, and the latter will appear frequently along this paper.
The \emph{free profinite semigroup}
generated by a finite alphabet $A$ is denoted by $\widehat {A^+}$. The construction of $\widehat {A^+}$ is entirely similar to that of $\F A$. Moreover, the closed subsemigroup $\F A\setminus \{1\}$ can be identified with $\widehat {A^+}$.
Frequently, results concerning $\widehat {A^+}$
have an immediate translation to $\F A$, and vice-versa.
The reader should keep this in mind when checking references.
Finally, we denote by $F_G(A)$
the free group generated by $A$,  and
by $\FG A$ the \emph{free profinite group} generated by $A$, which
has $F_G(A)$ as a dense subgroup.
The cardinal of $A$ is the \emph{rank} of $\FG A$.
These notions can be generalized to infinite alphabets, but some care is needed when doing that. Except for mentioning \emph{en passant} an example,
we shall not need to consider such generalizations.

Viewing $\FG A$ as a profinite monoid, we may
consider the canonical projection from $\F A$ onto $\FG A$,
which is the unique continuous homomorphism $p_G\colon \F A\to \FG A$
fixing the elements of $A$.

In this paper we explore some connections between structural aspects
of the free profinite monoid and the structure of the syntactic monoid (whose definition we recall in Subsection~\ref{sec:syntactic-monoid}) of the monoid generated by a code.
That is why we next recall some basics of the structural theory
of monoids~\cite{Clifford&Preston:1961,Almeida:1994a,Rhodes&Steinberg:2009qt}.
Green's relations $\mathcal J$, $\mathcal R$ and $\mathcal L$
are defined by
\begin{equation*}
  u\mathrel{\mathcal J} v\Leftrightarrow MuM=MvM,\quad
  u\mathrel{\mathcal R} v\Leftrightarrow uM=vM,\quad
  u\mathrel{\mathcal L} v\Leftrightarrow Mu=Mv.
\end{equation*}
The other important Green's relations are
$\mathcal D=\mathcal R\vee\mathcal L$
and $\mathcal H=\mathcal R\cap\mathcal L$.
An element $u$ of $M$ is \emph{regular} if $u\in uMu$.
A $\mathcal D$-class is \emph{regular} if it contains a regular element, equivalently, if all its elements are regular.
The regular $\mathcal D$-classes are the ones that contain idempotents.
In the monoid $M$, the $\mathcal H$-classes of idempotents are the
maximal subgroups of $M$, with respect to the inclusion relation (a \emph{subgroup} of $M$ is a subsemigroup of $M$ which is a group). The maximal subgroups contained in the same regular $\mathcal D$-class are isomorphic. 
If $M$ is profinite, then $\mathcal J=\mathcal D$,
and the maximal subgroups of the same regular $\mathcal D$-class
are isomorphic as profinite groups.
Furthermore, we have the following
elementary property, isolated for future reference.

\begin{Prop}\label{p:maximal-onto-maximal}
  Consider a continuous homomorphism
  $\varphi\colon M\to N$ of profinite
  monoids.  Let $H$ and $H'$ be maximal subgroups
  of $M$ contained in the same $\mathcal D$-class.
  Then
  $\varphi(H)$ and $\varphi(H')$ are isomorphic profinite groups.
  Moreover, $\varphi(H)$ is a maximal subgroup of $N$ if and only if
  $\varphi(H')$ is a maximal subgroup of $N$.
\end{Prop}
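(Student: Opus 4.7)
The plan is to realise both claims through a pair of ``conjugating'' elements of $M$ and then push them through $\varphi$. Write $e$ and $f$ for the identities of $H$ and $H'$, respectively. Since $e\mathrel{\mathcal D} f$, Green's lemma (applied to any element in the intersection of the $\mathcal R$-class of $e$ and the $\mathcal L$-class of $f$) supplies $a,b\in M$ satisfying
\begin{equation*}
  ab=e,\quad ba=f,\quad ea=af=a,\quad fb=be=b.
\end{equation*}
A short verification, relying only on these identities and on the fact that $e,f$ act as identities on $H,H'$, shows that $h\mapsto bha$ is a group isomorphism $H\to H'$ with inverse $h'\mapsto ah'b$.

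Set $\alpha=\varphi(a)$, $\beta=\varphi(b)$. The homomorphism $\varphi$ transports the displayed relations verbatim to $N$, giving $\alpha\beta=\varphi(e)$, $\beta\alpha=\varphi(f)$, and so on. Consequently, the continuous self-maps $\Theta\colon y\mapsto\beta y\alpha$ and $\Theta'\colon y\mapsto\alpha y\beta$ of $N$, by the very same algebraic manipulation, restrict to mutually inverse continuous group isomorphisms between $\varphi(H)$ and $\varphi(H')=\beta\varphi(H)\alpha$. Since $\varphi(H)$ and $\varphi(H')$ are continuous images of the compact $\mathcal H$-classes $H,H'$ (closed in the profinite monoid $M$), they are compact subgroups of~$N$, hence profinite groups, and the established isomorphism is one of profinite groups.

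For the maximality statement I would repeat the argument inside the ambient maximal subgroups $H_{\varphi(e)}$ and $H_{\varphi(f)}$ of~$N$. A brief computation shows that for every $y\in H_{\varphi(e)}$ the element $\beta y\alpha$ lies in $\varphi(f)N\varphi(f)$ and has two-sided inverse $\beta y^{-1}\alpha$ there, so it belongs to $H_{\varphi(f)}$; symmetrically for $\Theta'$. Thus $\Theta,\Theta'$ restrict to mutually inverse bijections between $H_{\varphi(e)}$ and $H_{\varphi(f)}$, and these bijections send $\varphi(H)$ onto $\varphi(H')$. Hence $\varphi(H)=H_{\varphi(e)}$ exactly when $\varphi(H')=H_{\varphi(f)}$, which is the required equivalence of maximality. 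The only genuinely delicate point in the plan is the simultaneous extraction of the four identities for $a$ and $b$ at the outset; once they are in place, everything reduces to routine ``conjugation calculus.''
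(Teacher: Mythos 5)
Your proof is correct and follows essentially the same route as the paper's: the paper also invokes Green's Lemma to produce sandwiching elements $x,y$ (your $b,a$) making $h\mapsto xhy$ a continuous isomorphism $H\to H'$, and then pushes everything through $\varphi$. Your version merely spells out the standard details (the four identities for $a,b$, the conjugation calculus, and the transfer of maximality via the induced bijection $H_{\varphi(e)}\leftrightarrow H_{\varphi(f)}$) that the paper leaves implicit.
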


\begin{proof}
  The proposition is a direct consequence of
  Green's Lemma (cf.~\cite[Appendix A]{Rhodes&Steinberg:2009qt}), which in this setting
  affirms in particular the existence of elements $x,y\in M$
  such that $h\in H\mapsto xhy\in H'$
  is a continuous isomorphism, with
  inverse given by $h\in H'\mapsto x'hy'\in H$
  for some $x',y'\in M$.
\end{proof}

The abstract (profinite) group
defining the isomorphism class of the maximal subgroups of
a regular $\mathcal D$-class of a (profinite) monoid
is the \emph{Sch\"utzenberger group} of the $\mathcal D$-class.
This notion can be extended to non-regular $\mathcal D$-classes, but
we shall not need the generalization for this paper.

A profinite monoid $M$ also satisfies the \emph{stability property}, that
states that $u\mathrel{\mathcal J}ux$
if and only if $u\mathrel{\mathcal R}ux$,
and that $u\mathrel{\mathcal J}xu$
if and only if $u\mathrel{\mathcal L}xu$,
for every $u,x\in M$. We shall occasionally use this property without reference.

We denote by $\overline Y$ the topological closure in $\F A$ of a subset $Y$
of $\F A$. We have the following characterization of rational languages, which
offers a glimpse of the usefulness of the free profinite monoid.

\begin{Thm}[{cf.~\cite[Theorem 3.6.1]{Almeida:1994a}}]\label{t:closure-of-rational-lang}
  Let $L$ be a language of $A^*$.
  Then $\overline{L}$ is open if and only if
  $L$ is rational.
\end{Thm}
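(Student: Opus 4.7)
The plan is to prove both implications by passing through continuous homomorphisms $\hat\varphi\colon\F A\to M$ into finite monoids, exploiting that $A^*$ is dense in $\F A$ and that every finite word is isolated in $\F A$.

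For the ``if'' direction, I would fix a homomorphism $\varphi\colon A^*\to M$ into a finite monoid recognizing $L$, say $L=\varphi^{-1}(P)$ with $P=\varphi(L)$, and consider its unique continuous extension $\hat\varphi$. I would then establish $\overline{L}=\hat\varphi^{-1}(P)$: one inclusion is immediate because $\hat\varphi^{-1}(P)$ is closed (as $M$ is discrete) and contains $L$; for the reverse inclusion, given $x\in\hat\varphi^{-1}(P)$, density of $A^*$ yields a net $(w_i)$ in $A^*$ converging to $x$, and continuity together with the discreteness of $M$ forces $\varphi(w_i)$ to be eventually equal to $\hat\varphi(x)\in P$, so that $w_i\in L$ eventually and $x\in\overline{L}$. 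Since $P$ is clopen in the discrete $M$, this identity makes $\overline{L}$ clopen, in particular open.

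For the ``only if'' direction, assume $\overline{L}$ is open; then $\overline{L}$ is clopen in the compact space $\F A$. Here the key fact I would invoke is that the clopen subsets of a profinite monoid are precisely the preimages of subsets of finite continuous quotients, a consequence of $\F A$ being the projective limit of its finite continuous homomorphic images together with the fact that finite Boolean combinations of such preimages can be realised, by taking products, as a preimage under a single continuous homomorphism to a finite monoid. This would produce $\hat\varphi\colon\F A\to M$ with $M$ finite and $P\subseteq M$ such that $\overline{L}=\hat\varphi^{-1}(P)$. Setting $\varphi=\hat\varphi|_{A^*}$, the inclusion $L\subseteq\varphi^{-1}(P)$ is immediate from $L\subseteq\overline{L}$; conversely, if $w\in A^*$ satisfies $\varphi(w)\in P$, then $w\in\overline{L}$, and since $w$ is isolated in $\F A$ the singleton $\{w\}$ is an open neighborhood of $w$ which must meet $L$, giving $w\in L$. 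Hence $L=\varphi^{-1}(P)$ is recognised by a finite monoid, so it is rational.

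The step I expect to be the main obstacle is justifying that $\overline{L}$, once known to be clopen, is actually of the form $\hat\varphi^{-1}(P)$ for a \emph{single} continuous homomorphism into a finite monoid. The definition of profinite monoid used in the preliminaries only guarantees, for any two distinct points, a continuous homomorphism to a finite monoid separating them; upgrading this to the uniform description of clopen sets needed here uses the projective-limit presentation of $\F A$ together with compactness of $\overline{L}$ (to collapse finitely many separating quotients into one product quotient). Everything else rests on the two elementary topological facts highlighted in the preliminaries: density of $A^*$ in $\F A$ and isolation of the elements of $A^*$.
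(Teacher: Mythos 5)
Your proof is correct, and it is essentially the standard argument from the source the paper cites (Almeida, Theorem 3.6.1); the paper itself states this result without proof. Both directions are sound: the identification $\overline{L}=\hat\varphi^{-1}(P)$ via density of $A^*$ and discreteness of $M$, the recovery $L=\overline{L}\cap A^*$ from the isolation of finite words, and the compactness argument reducing a clopen subset of the projective limit $\F A$ to a preimage under a single continuous homomorphism onto a finite monoid are all exactly the ingredients of the classical proof.
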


For a proof of the following useful technical result,
see \cite[Section 3]{Almeida&ACosta&Costa&Zeitoun:2017}.
It is an improved version of~\cite[Lemma 2.5]{Almeida&ACosta:2007a}.
It expresses in terms of sequences the property that the multiplication in $\F A$ is an open mapping.

\begin{Lemma}\label{l:refine-factorization}
  Let $u,v,w\in \F A$ be such that $w=uv$.
  Suppose that $(w_n)_n$ is a sequence of elements of $\F A$
  converging to $w$.
  Then there are factorizations $w_n=u_nv_n$
  such that the sequences $(u_n)_n$ and $(v_n)_n$ respectively converge
  to $u$ and $v$.
\end{Lemma}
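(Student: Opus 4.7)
The plan is to reduce the statement to the openness of the multiplication map $m\colon \F A\times \F A\to \F A$ at the pair $(u,v)$ and then deduce the sequential version by a standard diagonal argument.

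Since $\F A$ is a compact metrizable Stone space, $u$ and $v$ admit countable decreasing bases of clopen neighborhoods $(U_k)_k$ and $(V_k)_k$ with $\bigcap_k U_k=\{u\}$ and $\bigcap_k V_k=\{v\}$; one may take $U_k=\varphi_k^{-1}(\varphi_k(u))$ and $V_k=\varphi_k^{-1}(\varphi_k(v))$ for suitable continuous surjections $\varphi_k\colon \F A\to M_k$ onto finite $A$-generated monoids. Assuming openness of $m$ at $(u,v)$, each product $U_kV_k$ is a neighborhood of $w=uv$, so there exists an index $n_k$ such that $w_n\in U_kV_k$ for every $n\geq n_k$; arranging $n_k<n_{k+1}$ and choosing, for $n_k\le n<n_{k+1}$, any factorization $w_n=u_nv_n$ with $(u_n,v_n)\in U_k\times V_k$ yields sequences with $u_n\to u$ and $v_n\to v$.

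The main obstacle is therefore the openness of $m$ at $(u,v)$, which amounts to showing that for each continuous surjection $\varphi\colon \F A\to M$ onto a finite monoid the product $\varphi^{-1}(\varphi(u))\cdot\varphi^{-1}(\varphi(v))$ is a neighborhood of $w$. The natural candidate $\varphi^{-1}(\varphi(w))$ need not lie inside this product, because a $\varphi$-class of $w$ can contain pseudowords whose every factorization in $\F A$ fails to project to the target pair $(\varphi(u),\varphi(v))$ in $M\times M$. The remedy is to pass to a refinement $\varphi'\colon \F A\to M'$ of $\varphi$ that records enough positional information about initial factors so that every word $w'\in A^*$ with $\varphi'(w')=\varphi'(w)$ admits a positional split $w'=u'v'$ with $\varphi(u')=\varphi(u)$ and $\varphi(v')=\varphi(v)$; existence of such a split for finite words is a finite combinatorial statement, provable by iterating through the positions of $w'$ and using that $\varphi'(w)$ forces a match. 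Taking closures in $\F A\times \F A$ and invoking compactness then extends the conclusion from $A^*\cap (\varphi')^{-1}(\varphi'(w))$ to the full clopen set $(\varphi')^{-1}(\varphi'(w))$, which yields the required openness of $m$ at $(u,v)$ and closes the proof.
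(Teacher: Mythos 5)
The paper gives no proof of this lemma: it cites Section~3 of the reference by Almeida, Costa, Costa and Zeitoun and merely records that the statement is the sequential form of the fact that multiplication in $\F A$ is an open map. Your reduction to that openness statement, and the diagonal extraction of $(u_n)_n$ and $(v_n)_n$ from a shrinking base of clopen neighbourhoods $U_k\times V_k$, are correct and match exactly the framing the paper itself suggests. Your final closure step is also sound: $\varphi^{-1}(\varphi(u))\cdot\varphi^{-1}(\varphi(v))$ is a product of compact sets, hence closed, so containing the dense subset $A^*\cap(\varphi')^{-1}(\varphi'(w))$ forces it to contain the whole clopen set.

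The genuine gap is in your third paragraph, which is where all the mathematical content of the lemma lives. You assert the existence of a refinement $\varphi'$ that ``records enough positional information about initial factors'' and call the resulting splitting property ``a finite combinatorial statement, provable by iterating through the positions'', but you neither construct $\varphi'$ nor prove the splitting claim; as written it is a restatement of what must be shown. Two things are needed. First, an explicit construction, e.g.\ the Schützenberger-product-type morphism sending a finite word $w'$ to the pair $\bigl(\{(\varphi(p),\varphi(q)) : w'=pq\},\varphi(w')\bigr)$, together with the verification (using equidivisibility of $A^*$) that this is a homomorphism into a finite monoid. Second --- and this is the step your sketch skips entirely --- an argument that the value of this morphism at the \emph{pseudoword} $w$ actually contains the pair $(\varphi(u),\varphi(v))$. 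That is not automatic, since $u$ and $v$ are in general infinite pseudowords with no ``positions'': one must approximate $u=\lim u_m$ and $v=\lim v_m$ by finite words, use that the targets are finite and discrete to get $\varphi'(u_mv_m)=\varphi'(w)$ and $(\varphi(u_m),\varphi(v_m))=(\varphi(u),\varphi(v))$ for large $m$, and observe that $(\varphi(u_m),\varphi(v_m))$ is by construction among the recorded factorization pairs of the finite word $u_mv_m$. Without these two ingredients the openness of multiplication --- the nontrivial content of the cited result --- remains unproved, so the proposal as it stands does not close the argument.
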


If $S$ is a compact semigroup, and $s\in S$, then
the closed subsemigroup $\overline{s^+}$
of $S$ generated by $s$ contains a unique idempotent, denoted $s^\omega$.
It is the neutral element of the unique maximal subgroup $K_s$
of the compact semigroup~$\overline{s^+}$. Moreover,
$K_s$ is the minimum ideal of $\overline{s^+}$.
The element $s\cdot s^\omega$ is denoted $s^{\omega+1}$,
and its inverse in $K_s$ is denoted by $s^{\omega-1}$.
If $S$ is profinite, then $s^\omega=\lim s^{n!}$. 

\subsection{The topological closure of a recurrent set}

If $F$ is a factorial subset of $A^*$,
then $\overline F$ is itself factorial, that is, it contains its factors in
$\F A$, and  if $F$ is recurrent, then $\overline F$
also satisfies the property that $u,v\in\overline F$ implies the existence of $w\in \overline{F}$ such that $uwv\in\overline F$~\cite{Almeida&ACosta:2007a}.
Using a standard compactness argument (see \cite[Proposition 3.6]{ACosta&Steinberg:2011}), one also sees that when $F$ is a recurrent subset of $A^*$, there is a regular $\mathcal J$-class $J(F)$ contained in $\overline F$
such that $\overline{F}$ is the set of factors of elements of $J(F)$.
It is the unique $\mathcal J$-class with these properties, and it is the
minimum $\mathcal J$-class which has nonempty intersection with $\overline {F}$.
In the uniformly recurrent case, $J(F)$ occupies a very special place
in the structure of $\F A$, as seen next.

\begin{Thm}[{\cite{Almeida:2004a}}]
  An element $u$ of $\F A\setminus A^*$
  is such that all of its proper factors are elements of $A^*$
  if and only if $u\in J(F)$ for some uniformly recurrent set $F$.
\end{Thm}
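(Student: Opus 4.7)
The proof proceeds in two directions, both relying on the interplay between factors of pseudowords and the closure $\overline F$, and crucially using that, for every finite word $w\in A^*$, the set $\F A w\F A$ of pseudowords having $w$ as a factor is clopen, a consequence of Theorem~\ref{t:closure-of-rational-lang} applied to the rational language $A^*wA^*$.

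For the implication $u\in J(F)$ with $F$ uniformly recurrent $\Rightarrow$ all proper factors of $u$ lie in $A^*$, the auxiliary claim I would establish first is that, for uniformly recurrent $F$, one has $J(F)=\overline F\setminus A^*$. To prove it, I take any infinite $v\in\overline F$, write $v=\lim v_n$ with $v_n\in F$ of lengths tending to infinity, and show that every $u'\in J(F)$ is a factor of $v$: approximating $u'$ by finite factors $w_k\in F$ with $w_k\to u'$, each $w_k$ appears inside all sufficiently long $v_n$ by uniform recurrence, and a diagonal compactness argument combined with Lemma~\ref{l:refine-factorization} produces a factorization $v=xu'y$ in the limit. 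The conclusion then follows: $u$ must be infinite, since a finite word forms a singleton $\mathcal J$-class in $\F A$ while $J(F)$ contains infinite pseudowords by the auxiliary claim; and any infinite factor of $u$ belongs to $\overline F$ by factoriality, hence to $J(F)$, hence is $\mathcal J$-equivalent to $u$ and so not a proper factor in the strict sense.

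For the converse direction I set $F=\{w\in A^*:w\text{ is a factor of }u\}$. Factoriality is immediate, and $F\neq\{1\}$ follows because an infinite pseudoword has finite factors of unbounded length, which can be read off through continuous homomorphisms of $\F A$ onto suitable finite monoids. Uniform recurrence of $F$ is then proved by contradiction: if some $w\in F$ is not a factor of arbitrarily long $w_n\in F$, take a convergent subsequence $w_n\to v\in\F A\setminus A^*$; the limit $v$ is an infinite factor of $u$, so by hypothesis $v\mathrel{\mathcal J}u$, whence $u$ is a factor of $v$, and in turn $w$ is a factor of $v$. But then $\F A w\F A$ contains $v$ while its complement contains each $w_n$, contradicting $w_n\to v$ via the clopenness of this set.

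It remains to verify $u\in J(F)$, which, in view of the auxiliary claim above, reduces to showing $u\in\overline F$. I expect this to be the main obstacle, because the natural finite approximants of $u$ in the profinite topology need not themselves be factors of $u$. My plan is to exploit that the $\mathcal J$-class of $u$ must be regular: extract an idempotent $e$ in the minimum $\mathcal J$-class of the principal ideal generated by $u$ and use the hypothesis to establish $e\mathrel{\mathcal J}u$, producing a decomposition $u=\alpha e\beta$. Since $e\in\overline{u^+}$ it can be approximated by elements of $F$ obtained from long enough iterated products of finite factors, and transporting these approximants through $\alpha$ and $\beta$ should exhibit $u$ as a limit of elements of $F$; making this transport precise in every finite profinite quotient, and controlling it using the stability property, will be the delicate part of the argument.
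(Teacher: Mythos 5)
Your first direction is correct and follows the same lines as the source \cite{Almeida:2004a} that the paper cites for this theorem (the paper itself gives no proof): the identity $J(F)=\overline{F}\setminus A^*$ for uniformly recurrent $F$ is the right auxiliary fact, and your diagonal compactness argument for it works. The construction of $F$ as the set of finite factors of $u$ and the clopenness argument showing the uniformity property are also sound; you should additionally check that $F$ is recurrent in the sense of the paper's definition, but that follows easily from uniformity by locating an occurrence of one word in a long prefix and of the other in a long suffix of a sufficiently long element of $F$.

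The gap is in the last step, and it is fatal as written. Whichever idempotent you intend, the claim $e\mathrel{\mathcal J}u$ is false in general. If $e$ lies in the minimum $\mathcal J$-class of $\F A u\F A$, that class is the minimum ideal of $\F A$, and $e\mathrel{\mathcal J}u$ would force $F=A^*$, which is not uniformly recurrent when $|A|\geq 2$. If instead $e=u^\omega\in\overline{u^+}$, then $u^\omega$ need not be a factor of $u$ even when $u$ \emph{does} belong to $J(F)$ for a uniformly recurrent $F$: the finite factors of $u^\omega$ include the wrap-around words $sp$ with $s$ a suffix and $p$ a prefix of $u$, which need not lie in $F$. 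For instance, if $F$ is the Fibonacci set and $u\in J(F)$ begins and ends with $b$, then $bb$ is a factor of $u^2$, hence of $u^\omega$, but $bb\notin F$; so $u^\omega$ is not a factor of $u$ and $u^\omega$ is not $\mathcal J$-equivalent to $u$. The approximation step is likewise unsound, since $F$ is not closed under concatenation. Fortunately, the obstacle you flag dissolves once you note that the set of factors of $u$ is closed in $\F A$ (if $v_n\to v$ and $u=x_nv_ny_n$, pass to accumulation points of $(x_n)$ and $(y_n)$ and use continuity of the multiplication): every element of $\overline{F}$ is a limit of elements of $F$, each a factor of $u$, hence is itself a factor of $u$. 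Since $F$ is infinite, $\overline{F}$ contains an infinite pseudoword $v$, which lies in $J(F)$ by your auxiliary claim and is an infinite factor of $u$; by hypothesis $v\mathrel{\mathcal J}u$, whence $u\in J(F)$. There is no need to prove $u\in\overline{F}$ at all. (Alternatively, one can bypass the uniform recurrence of this particular $F$ by a Zorn's lemma argument producing a minimal subshift whose language consists of factors of $u$, which is essentially the route taken in \cite{Almeida:2004a}.)
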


We denote by $G(F)$ the Sch\"utzenberger group of $J(F)$, when $F$ is recurrent.
It is shown in~\cite{Almeida&Volkov:2006} that if $F$ is
periodic, then $G(F)$
is a free profinite group of rank $1$,
while in~\cite{ACosta&Steinberg:2011}
it is proved that if $F$ is a non-periodic rational recurrent subset of~$A^*$, then
$G(F)$ is a free profinite group of rank $\aleph_0$.
Concerning uniformly recurrent sets, the identification of
$G(F)$ has only been made for some special classes.
For instance, if $F$ is a uniformly recurrent tree set with alphabet $A$,
then $G(F)$ is a free profinite group with rank $|A|$,
and the restriction to a maximal subgroup of $J(F)$
of the canonical projection from $\F A$ onto $\FG A$
is a continuous isomorphism~\cite[Section 6]{Almeida&ACosta:2016b}.
 More generally, we have
 the next weaker property
 for all uniformly recurrent connected sets,
 a property which we shall invoke later in this paper.

 \begin{Thm}\label{t:projection-maximal-subgroup-connected-set}
   Let $F$ be a uniformly recurrent connected subset of $A^*$
   with alphabet $A$. 
   If $H$ is a maximal subgroup of $J(F)$
   and $p_G$ is the canonical projection from $\F A$ onto $\FG A$,
   then $p_G(H)=\FG A$.
 \end{Thm}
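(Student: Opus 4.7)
The plan is to reduce the claim to checking surjectivity against each finite continuous quotient of $\FG A$, to recast such a quotient via a group code, and then to extract the required surjectivity from the connectedness of the extension graphs of words of~$F$.

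First, I would observe that it suffices to prove $p_G(H)=\FG A$ for one conveniently chosen maximal subgroup $H=H_e$ of $J(F)$. Indeed, by Proposition~\ref{p:maximal-onto-maximal}, any other maximal subgroup $H'$ of $J(F)$ admits a continuous isomorphism $H\to H'$ of the form $h\mapsto xhy$ with $x,y\in\F A$; after applying $p_G$ and using that the images of the idempotents anchoring $H$ and $H'$ are both equal to $1$ in the group $\FG A$, this becomes a conjugation sending $p_G(H)$ onto $p_G(H')$, and since the conjugate of $\FG A$ in itself is $\FG A$, whether the image is everything does not depend on the choice of~$H$. Next, since $p_G(H)$ is a closed subgroup of the profinite group $\FG A$, the equality $p_G(H)=\FG A$ is equivalent to saying that for every continuous surjective homomorphism $\varphi\colon\FG A\to G$ onto a finite group~$G$, the composite $\psi=\varphi\circ p_G\colon\F A\to G$ satisfies $\psi(H)=G$. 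Each such~$\psi$ is encoded by the group code~$Z$ of $A^\ast$ with $Z^\ast=\psi^{-1}(1_G)\cap A^\ast$, and $G$ is then realised as the transition group of the group automaton of~$Z$, with letters acting by~$\psi$.

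The heart of the proof is to identify $\psi(H)$ with a subgroup of $G$ generated by the $\psi$-values of return words. Pick a finite factor $w\in A^\ast$ of~$e$ that is long enough. By uniform recurrence of~$F$, combined with the theorem of~\cite{Almeida:2004a} quoted above that the proper factors of elements of $J(F)$ all lie in $A^\ast$, one can write $e$ as a limit in $\F A$ of words of the form $wv_nw\in F$, each $v_n$ being a concatenation of return words of~$w$ in~$F$. Continuity of~$\psi$ then shows that $\psi(H)$ coincides with the subgroup of~$G$ generated by the elements $\psi(wrw)$ as $r$ runs over the return words of~$w$ in~$F$. The remaining claim is that this subgroup is all of~$G$ as soon as the extension graph~$G(w)$ is connected.

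This last claim is the main obstacle: it is the translation of a combinatorial connectedness property of one-letter extensions of $w$ into surjectivity of $\psi$ onto an arbitrary finite group. The intended argument generalises the tree-set case of~\cite[Section~6]{Almeida&ACosta:2016b}, where the return words of~$w$ project to a basis of the free group on $A$; in the connected case one can no longer expect a basis, but combining edges of $G(w)$ along a path connecting any two vertices (with uniform recurrence used to realise each single-letter step as an honest factor of an element of~$F$) and exploiting that $\psi$ factors through the group $\FG A$ (so that inverses along such a path are available) should suffice to realise every transition of the group automaton of~$Z$, and hence every element of~$G$, as a $\psi$-value of some return word of~$w$ in~$F$.
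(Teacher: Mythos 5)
Your overall architecture (reduce to finite continuous quotients of $\FG A$, realise each quotient by a group code, and feed in the return words of a word $w\in F$) is compatible with the paper's proof, which also runs through return words; but two of your steps are asserted rather than proved, and each of them hides the real content of the theorem. First, the claim that ``$\psi(H)$ coincides with the subgroup of $G$ generated by the elements $\psi(wrw)$'' does not follow from continuity alone. Writing the idempotent $e$ as a limit of words $wv_n$ with $v_n\in R_F(w)^{*}$ is easy, but it only constrains $\psi$ along one particular sequence converging to $e$; to control $\psi(H)$ you must know that \emph{every} element of the maximal subgroup $H$ is approximable by products of (conjugates of) return words, and that is exactly the nontrivial Lemma~\ref{l:technical-lemma-on-approximation-of-H} (i.e.\ \cite[Lemma 5.3]{Almeida&ACosta:2016b}), which expresses $H$ as a descending intersection $\bigcap_{n}\overline{X_n^+}$ with $X_n$ conjugate to $R_F(u_n)$, and which moreover is only available for non-periodic $F$ --- the paper has to treat the periodic case separately, and your proposal does not.

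Second, your ``last claim'' --- that connectedness of the extension graphs forces the return words to generate a dense subgroup of $\FG A$ --- is precisely Theorem~\ref{t:return-sets-generate-free-groups}, the Return Theorem for connected sets of \cite{Berthe&Felice&Dolce&Leroy&Perrin&Reutenauer&Rindone:2015}, which the paper imports as a black box. Your sketch (walk along a path in a single graph $G(w)$ and ``realise every transition of the group automaton'') does not establish it: connectedness of one graph $G(w)$ only relates the one-letter left and right extensions of that single $w$, and there is no direct passage from an edge $(a,b)\in E(w)$ to a return word of $w$ with a prescribed image in $G$. The actual proof is an induction over all words of $F$, using connectedness of every extension graph to compare the subgroups generated by $R_F(w)$ and by the return words of its one-letter extensions. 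Note also that weakening the target from ``generates $F_G(A)$'' to ``generates a dense subgroup of $\FG A$'' buys nothing: by M.~Hall's theorem finitely generated subgroups of $F_G(A)$ are closed in the profinite topology, so for the finite set $R_F(w)$ the two statements are equivalent, and you are in effect re-proving the Return Theorem. Either invoke that theorem and Lemma~\ref{l:technical-lemma-on-approximation-of-H} explicitly, as the paper does, or supply full proofs of both; as written, the two central steps are gaps.
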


 In the next few paragraphs we give a proof of Theorem~\ref{t:projection-maximal-subgroup-connected-set}. Like the proof for the above mentioned stronger property
 for uniformly recurrent tree sets deduced in~\cite[Section 6]{Almeida&ACosta:2016b}, it uses the important concept of return word.
 If $F$ is a recurrent set and $u\in F$, then
 the set of \emph{first return words of $F$ to $u$} is the
 prefix code $R_F(u)$ consisting of the words $v$ such that $uv\in F\cap A^\ast u$
 and $u$ is not an internal factor of $uv$.
 The set $F$ is uniformly recurrent if and only if
 $R_F(u)$ is finite for every $u\in F$.
 The following theorem from~\cite{Berthe&Felice&Dolce&Leroy&Perrin&Reutenauer&Rindone:2015} is one of the main ingredients
 in the proof of Theorem~\ref{t:projection-maximal-subgroup-connected-set}.
 The alphabet of a factorial set $F$ of words is the set of letters
 belonging to $F$.
 
 \begin{Thm}\label{t:return-sets-generate-free-groups}
   If $F$ is a uniformly recurrent connected set with alphabet~$A$, then $R_F(u)$ is a generating set of the free group
   over~$A$, for every $u\in F$.
 \end{Thm}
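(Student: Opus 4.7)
Denote by $G$ the subgroup of $F_G(A)$ generated by the image of $R_F(u)$; the goal is to show $G=F_G(A)$. My plan starts with the elementary observation that if $w\in A^*$ satisfies $uw\in F\cap A^*u$, then the image of $w$ in $F_G(A)$ lies in $G$. Indeed, enumerating the occurrences of $u$ as a factor of $uw$ from left to right, the gaps between consecutive occurrences are first return words; so $w=r_1r_2\cdots r_k$ with each $r_i\in R_F(u)$, the definition of $R_F(u)$ being phrased precisely so that this decomposition remains correct even when $u$ overlaps with itself. This preliminary step uses only the recurrence of $F$, not connectedness.

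Next, I would reformulate the problem combinatorially using the Rauzy graph $\Gamma$ of $F$ at level $|u|$: the vertices are the length-$|u|$ factors of $F$, and for each length-$(|u|+1)$ factor of $F$ there is an edge from its length-$|u|$ prefix to its length-$|u|$ suffix, labelled by its last letter. Closed walks at the vertex $u$ trace exactly the words $w$ with $uw\in F\cap A^*u$, so by the first step the subgroup of $F_G(A)$ generated by labels of such closed walks is exactly $G$. The problem thus reduces to exhibiting, for each letter $a\in A$, a closed walk at $u$ in $\Gamma$ whose label equals $a$ in $F_G(A)$.

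Fix $a\in A$. Applying the recurrence of $F$ twice ($u$ to $a$, then back to $u$), there exist words $c,d$ with $ucadu\in F$, so $cadu\in G$ by the first step, and the occurrence of $a$ must lie inside some first return word $r$ appearing in a factorisation $cadu=r_1\cdots r_k\in R_F(u)^+$; writing this $r$ as $xay$, one has $xay\in G$. Thus $a\in G$ will follow once I produce a closed walk at $u$ whose label is equal to $xy$ in $F_G(A)$, for then $a=(xay)(xy)^{-1}\in G$. Such a walk is to be built by rerouting the single edge labelled $a$ inside the walk $r$: letting $v,v'\in\Gamma$ be its endpoints and $w$ their common length-$(|u|-1)$ overlap, this edge corresponds to a pair in $E(w)$; the connectedness of the extension graph $G(w)$ supplies a sequence of alternative pairs in $E(w)$ that each translate into an edge of $\Gamma$, and each such alternative edge is closed back to $u$ using uniform recurrence, invoking the first step again to verify that these auxiliary returns contribute only elements of $G$ to the overall label.

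The main obstacle will be organising this global substitution cleanly: one must verify that each elementary exchange of one pair in $E(w)$ for an adjacent one alters the total label of the surrounding walk only by an element of $G$, so that chaining the exchanges afforded by a path in $G(w)$ from the starting pair to itself (or to another pair giving a trivially reclosable detour) yields a walk of label $xy$. I anticipate that the bookkeeping is cleanest when phrased in the Schreier graph of $G$ in $F_G(A)$: the claim to verify is that each letter, viewed as an action on cosets, fixes the coset $G\cdot x$, and the connectedness of $G(w)$ is precisely the combinatorial input that rules out any nontrivial coset splitting at the intermediate vertex.
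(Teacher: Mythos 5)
First, a point of reference: the paper does not prove Theorem~\ref{t:return-sets-generate-free-groups} at all --- it is imported verbatim from \cite{Berthe&Felice&Dolce&Leroy&Perrin&Reutenauer&Rindone:2015}, where the proof is an induction on the order of the Rauzy graphs, showing that the subgroup ``described'' by $\Gamma_n$ coincides with the one described by $\Gamma_{n-1}$ whenever the extension graphs of \emph{all} words of length $n-1$ are connected, and descending from order $|u|$ all the way to order $0$. Your proposal works with the right objects, but it is not a proof, and two of its steps contain genuine gaps. The first is the assertion that closed walks at the vertex $u$ in the Rauzy graph ``trace exactly'' the words $w$ with $uw\in F\cap A^*u$. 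Only one inclusion holds: a closed walk at $u$ labelled $w$ guarantees that every window of length $|u|+1$ of $uw$ lies in $F$, not that $uw$ itself lies in $F$. Consequently the subgroup generated by labels of closed walks may a priori be strictly larger than $G$, and your reduction --- ``it suffices to exhibit a closed walk at $u$ whose label equals $a$'' --- is unjustified: such a walk would place $a$ in the described group, not in $G$. Identifying the two groups is itself a nontrivial lemma in \cite{Berthe&Felice&Dolce&Leroy&Perrin&Reutenauer&Rindone:2015} (it needs a spanning-tree argument combined with recurrence to realize the elementary closed walks by genuine factors of $F$), and you assume it via a false equivalence.

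The second gap is the rerouting step, which is where all the work lies and which you leave as a plan. As literally described it achieves nothing: replacing the edge $bw\to wa$ by the zigzag supplied by a path $b-a_1-b_1-\cdots-a$ in $G(w)$ traverses each auxiliary edge once forwards and once backwards, so its total label in $F_G(A)$ is still $a$; and each attempt to ``close back to $u$'' a backward traversal by recurrence inserts a new complete return word whose letters are exactly as unknown to lie in $G$ as $a$ was, so the bookkeeping you defer is circular rather than merely tedious. More fundamentally, your argument invokes connectedness only for the single word $w$ of length $|u|-1$ straddling the chosen occurrence of $a$ (and its peers of that length), whereas the theorem genuinely consumes connectedness of $G(w)$ for words of every length from $0$ up to $|u|-1$: this is precisely what drives the induction down to $\Gamma_0$ in \cite{Berthe&Felice&Dolce&Leroy&Perrin&Reutenauer&Rindone:2015}, and no one-step local substitution at the top level can replace it. The Schreier-graph reformulation at the end is a reasonable way to organize that induction, but as stated it is a restatement of the goal, not an argument.
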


 The following tool, encapsulated
 in~\cite[Lemma 5.3]{Almeida&ACosta:2016b},
 is the other main ingredient in the proof of Theorem~\ref{t:projection-maximal-subgroup-connected-set}.

 \begin{Lemma}\label{l:technical-lemma-on-approximation-of-H}
   Let $F$ be a uniformly non-periodic recurrent subset of $A^*$. 
   If $H$ is a maximal subgroup of $J(F)$,
   then there is a sequence $(X_n)_{n\geq 1}$ of
   finite codes of $A^*$
   satisfying
     $H=\bigcap_{n\ge 1}\overline{X_n^+}$,
   with
      $X_1^+
      \supseteq X_2^+
      \supseteq X_3^+
      \supseteq
      \cdots
      $,
      and such that, for each $n\geq 1$, there is $u_n\in F$
      for which the set $X_n$ is conjugated
   in $F_G(A)$ with~$R_F(u_n)$.
 \end{Lemma}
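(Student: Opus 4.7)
The plan is to build each $X_n$ from the set of first return words $R_F(u_n)$ for a carefully chosen nested sequence $(u_n)$ of factors of an idempotent of $H$, via a ``cyclic rotation'' that turns return words into words of $A^*$ conjugate to them in $F_G(A)$.

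First I would fix an idempotent $e\in H$ and choose a sequence $(u_n)_{n\ge 1}$ of finite words in $F$, with $|u_n|\to\infty$ and each $u_n$ a factor of $u_{n+1}$ in a fixed relative position, such that every $u_n$ is a factor of $e$. This is possible because, by the non-periodicity assumption, $e$ is an infinite pseudoword in $J(F)$ whose proper factors are exactly elements of $F$. For each $n$ I would define $X_n=\{w_v:v\in R_F(u_n)\}$, where $w_v$ is the unique word of $A^*$ satisfying $w_v\,u_n=u_n\,v$; such a $w_v$ exists with $|w_v|=|v|$ because by definition of $R_F(u_n)$ the word $u_nv$ ends with $u_n$. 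Then $X_n$ is finite (by uniform recurrence, $R_F(u_n)$ is finite), a code (it inherits prefix/suffix-freeness from $R_F(u_n)$), and each $w_v$ equals $u_n v u_n^{-1}$ in $F_G(A)$, so $X_n$ is conjugate to $R_F(u_n)$ in $F_G(A)$ via $u_n$.

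To see $H\subseteq\bigcap_n\overline{X_n^+}$, I would exploit $e=e^\omega$ together with Lemma~\ref{l:refine-factorization}: approximating $e$ by finite words of $F$, every such approximation of sufficient length contains many occurrences of $u_n$ and so factors through a parse in $u_n\cdot R_F(u_n)^+$; translating this parse via the bijection $v\mapsto w_v$ realizes $e$ as a limit of elements of $X_n^+$ (up to fixed finite prefix and suffix corrections absorbable by multiplying by elements of $H$), and transferring via Green's Lemma extends this to every element of $H$. For the converse inclusion, every element of $X_n^+$ is a word of $F$ by construction, so any $w\in\bigcap_n\overline{X_n^+}$ lies in $\overline F$; as $w$ has $u_n$ as a factor for all $n$, standard $\mathcal J$-class arguments place $w$ in $J(F)$, and the matching of left and right ``return-word scaffoldings'' with those of $e$ forces $w\in H$.

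The nested condition $X_n^+\supseteq X_{n+1}^+$ is the most delicate point and is the main obstacle. It is secured by choosing $u_{n+1}$ long enough relative to $u_n$ that every first return word to $u_{n+1}$ in $F$ decomposes as a concatenation of first return words to $u_n$ in $F$; this is an inductive condition that uses the factor relation $u_n\preceq u_{n+1}$ together with uniform recurrence. Once established at the level of $R_F(\cdot)$, the conjugation rewriting $v\mapsto w_v$ transfers it to $X_n$ versus $X_{n+1}$, with the shift between the conjugating words $u_n$ and $u_{n+1}$ producing telescoping prefix/suffix corrections that must be absorbed into the decomposition. The core difficulty of the lemma is this simultaneous bookkeeping: maintaining equality (not just inclusion) $H=\bigcap_n\overline{X_n^+}$, the nesting $X_n^+\supseteq X_{n+1}^+$, and the explicit conjugacy $X_n\sim R_F(u_n)$ in $F_G(A)$, all while inductively refining the sequence $(u_n)$.
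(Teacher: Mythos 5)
Note first that the paper does not actually prove this lemma: it is quoted verbatim from \cite[Lemma 5.3]{Almeida&ACosta:2016b}, so the comparison below is with the construction carried out there. Your overall architecture (return words, a nested sequence of windows $u_n$ taken from the idempotent $e$ of $H$, free-group conjugation to turn $R_F(u_n)$ into a set of honest words, and compactness via Lemma~\ref{l:refine-factorization}) is the right one, but the specific conjugation you choose does not work, and the failure is not a matter of bookkeeping. You set $w_v=u_nvu_n^{-1}$ with $u_n$ a one-sided factor (in effect a prefix) of $e$. With that choice the forward inclusion $H\subseteq\overline{X_n^+}$ does go through (since $h=ehe$ forces $h$ to begin with $u_n$ and $hu_n$ to be a prefix of $he=h$, hence in $\overline F$), but the reverse inclusion fails: an element $\sigma$ of $X_n^+$ is exactly a word with $\sigma u_n\in F\cap u_nA^*$, so membership in $\bigcap_n\overline{X_n^+}$ only pins down the finite \emph{prefixes} of a limit point, i.e.\ its $\mathcal R$-class in $J(F)$, and says nothing about its suffixes. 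Concretely, if $\overrightarrow{e}$ (the right-infinite word of finite prefixes of $e$) is left-special, i.e.\ admits a second left-infinite extension $\lambda\neq\overleftarrow{e}$ inside the subshift --- which happens for some maximal subgroup of $J(F)$ in every non-periodic minimal subshift, e.g.\ Fibonacci --- then the idempotent $e_2$ in the $\mathcal H$-class indexed by $(\lambda,\overrightarrow{e})$ satisfies $e_2\in u_n\F A$ and $e_2u_n\in\overline F$ for all $n$, hence lies in every $\overline{X_n^+}$, yet $e_2\notin H$. Your closing appeal to ``matching of left and right return-word scaffoldings'' is precisely the step that cannot be carried out, because your construction installs no right scaffolding.

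The fix, which is the actual content of the cited lemma, is to center the windows on the two-sided word $\li{e}$ and to conjugate \emph{asymmetrically}: take $u_n=a_nb_n$ where $a_n$ is the length-$n$ suffix and $b_n$ the length-$n$ prefix of $e$, and set $X_n=b_nR_F(u_n)b_n^{-1}$ (each $b_nvb_n^{-1}$ is a genuine word because $u_nv$ ends in $u_n$). Then $\sigma\in X_n^+$ if and only if $a_n\sigma b_n\in F\cap u_nA^*\cap A^*u_n$ with $\sigma\neq 1$, so every sufficiently long element of $X_n^+$ both begins with $b_n$ and ends with $a_n$; in the limit this controls the $\mathcal R$-class \emph{and} the $\mathcal L$-class and yields $\bigcap_n\overline{X_n^+}=H$, using that for a uniformly recurrent non-periodic $F$ the $\mathcal H$-classes of $J(F)$ are determined by the pair of sets of finite prefixes and suffixes, and that the lengths of first return words tend to infinity (so the intersection contains no finite words). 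This description also makes the nesting immediate --- if $a_n$ is a suffix of $a_{n+1}$ and $b_n$ a prefix of $b_{n+1}$, then $a_{n+1}\sigma b_{n+1}\in F\cap u_{n+1}A^*\cap A^*u_{n+1}$ implies the analogous condition at level $n$ --- so the inductive ``decomposition of return words to $u_{n+1}$ into return words to $u_n$'' that you identify as the main obstacle is not needed at all.
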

 
 We are now ready to prove Theorem~\ref{t:projection-maximal-subgroup-connected-set}.

 \begin{proof}[Proof of Theorem~\ref{t:projection-maximal-subgroup-connected-set}]
   If $F$ is a periodic set, then
   $R_F(u)$ is a singleton for all
   words $u$ in $F$ with sufficiently large length,
   and so, accordingly to Theorem~\ref{t:return-sets-generate-free-groups},
   one has that $A=\{a\}$ is a one-letter alphabet
   and $F=a^*$.
   Hence, in that case, $H$ is the set $\F A\setminus A^\ast$,
   which projects via $p_G$ onto $\FG A$.

   Suppose that $F$ is not a periodic set. Let $(X_n)_n$ be a sequence
   of codes as in Lemma~\ref{l:technical-lemma-on-approximation-of-H}.
   Then, by the continuity of the homomorphism $p_G$,
   we know that,
   for every $n\geq 1$, the set $p_G\bigl(\overline{X_n^+}\bigr)$
   is the closed subgroup of $\FG A$ generated by $X_n$.
   Using the hypothesis that $X_n$ is conjugated in $F_G (A)$ with
   $R_F(u_n)$, for some $u_n\in F$, and applying
   Theorem~\ref{t:return-sets-generate-free-groups},
   we deduce that $p_G\bigl(\overline{X_n^+}\bigr)=\FG A$,
   for every $n\geq 1$.
   Since the sequence of sets $(\overline{X_n^+})_n$
   forms a descending chain for the inclusion,
   using a standard compactness argument we conclude that
   the image of the intersection
   $\bigcap_{n\geq 1}\overline{X_n^+}$ by $p_G$ is $\FG A$,
   that is, $p_G(H)=\FG A $.
 \end{proof}

 \subsection{Stable and unitary sets}
 \label{sec:unitary-sets}

 A submonoid $N$ of a monoid $M$
  is called \emph{stable}\index{stable}
 if for all $u,v,w\in M$, whenever $u,vw,uv,w\in N$
 we have $v\in N$. It is called \emph{right unitary}
 if for every $u,v\in A^*$, $u,uv\in N$ implies $v\in N$.
 There is also the dual notion of \emph{left unitary} code.
 It is well known (cf.~\cite[Propositions 2.2.5 and 2.2.7]{Berstel&Perrin&Reutenauer:2010}) that a submonoid of $A^*$ is stable
 if and only if it is generated by a code,
 and it is right unitary (respectively, left unitary)
 if and only if it is generated by a prefix code (respectively, by a suffix code).

  \begin{Prop}\label{p:propositionUnitary}
    Let $N$ be a recognizable submonoid of $A^*$,
    and consider its topological closure
    $\overline{N}$ in $\F A$.
    The following implications are true:
\begin{enumerate}
\item if $N$ is stable, then $\overline{N}$ is stable;
\item if $N$ is right unitary, then $\overline{N}$ is right unitary;
\item if $N$ is left unitary, then $\overline{N}$ is left unitary.
\end{enumerate}
\end{Prop}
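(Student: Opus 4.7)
The plan is to use recognizability to pull the three properties down to a finite quotient and then lift them back to the closure. Let $\eta\colon A^*\to M$ be a surjective homomorphism onto a finite monoid recognizing $N$, so that $N=\eta^{-1}(\eta(N))$. Write $P=\eta(N)$, and let $\hat\eta\colon \F A\to M$ be the (unique) continuous extension of $\eta$. Since $M$ is discrete, $\hat\eta^{-1}(P)$ is clopen in $\F A$; it contains $N$ and is closed, so it contains $\overline N$. Conversely, any $w\in\hat\eta^{-1}(P)$ is a limit of a sequence $(w_n)$ in $A^*$, and because $\eta(w_n)\to \hat\eta(w)\in P$ inside the discrete monoid $M$, we have $\eta(w_n)\in P$ for all sufficiently large $n$, so $w_n\in N$ eventually and $w\in\overline N$. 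Hence $\overline N=\hat\eta^{-1}(P)$.

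Next I would show that the unitarity/stability property in question transfers from $N\subseteq A^*$ to its image $P\subseteq M$. For the right unitary case, take $p,pq\in P$; by surjectivity of $\eta$, choose $x,y\in A^*$ with $\eta(x)=p$ and $\eta(xy)=pq$, and observe that $N=\eta^{-1}(P)$ forces $x,xy\in N$; right unitarity of $N$ then gives $y\in N$, so $q=\eta(y)\in P$. The left unitary case is symmetric, and for stability the same kind of argument shows that $p,pq,qr,r\in P$ implies $q\in P$, by lifting $p,q,r$ to words $x,y,z\in A^*$ via $\eta$, deducing $x,xy,yz,z\in N$, and applying stability of $N$.

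Finally, the three implications follow directly from $\overline N=\hat\eta^{-1}(P)$. If $u,uv\in\overline N$, then $\hat\eta(u)=p$ and $\hat\eta(uv)=pq$ lie in $P$ with $q=\hat\eta(v)$, so right unitarity of $P$ gives $\hat\eta(v)\in P$, i.e.\ $v\in\overline N$; the left unitary case is analogous, and for stability the four hypotheses $u,uv,vw,w\in\overline N$ translate to $\hat\eta(u),\hat\eta(u)\hat\eta(v),\hat\eta(v)\hat\eta(w),\hat\eta(w)\in P$, forcing $\hat\eta(v)\in P$ and hence $v\in\overline N$. I do not foresee a serious obstacle: the one point that must be handled carefully is that recognizability only provides a morphism into a finite monoid, not necessarily a surjection, so one should restrict the codomain to $\eta(A^*)$ (or take the syntactic morphism) in order for the argument lifting the property from $N$ to $P$ to go through.
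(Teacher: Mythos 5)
Your proof is correct, but it takes a genuinely different route from the paper's. The paper argues directly with sequences: given $u,v,w$ with $u,uv,vw,w\in\overline{N}$, it picks word sequences $u_n\to u$, $v_n\to v$, $w_n\to w$ with $u_n,w_n\in N$, uses the openness of $\overline{N}$ (Theorem~\ref{t:closure-of-rational-lang}) together with the identity $N=\overline{N}\cap A^*$ to conclude $u_nv_n,v_nw_n\in N$ eventually, and then applies stability of $N$ at the level of words before passing to the limit. You instead pull everything down to a finite recognizing monoid: you establish $\overline{N}=\hat\eta^{-1}(P)$ with $P=\eta(N)$, show that stability and (left/right) unitarity transfer from $N$ to the submonoid $P$ of the finite quotient, and then read the conclusion off the preimage. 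Both arguments are sound and both ultimately rest on the same recognizability hypothesis; your identity $\overline{N}=\hat\eta^{-1}(P)$ is essentially the standard proof that $\overline{N}$ is clopen, so the two routes are cousins rather than strangers. What your version buys is the sharper structural statement that $\overline{N}$ is recognized by the same finite monoid and that the three properties are purely finite-monoid facts about $P$; what the paper's version buys is that it only ever invokes openness of $\overline{N}$, so it adapts to settings where one has that topological information without an explicit recognizing morphism. Two small points of hygiene in your write-up: in the transfer step you should choose $y$ with $\eta(y)=q$ (so that $\eta(xy)=pq$ follows), since the conclusion $q=\eta(y)\in P$ requires control of $\eta(y)$ itself and not merely of $\eta(xy)$; and your closing remark about replacing the codomain by $\eta(A^*)$ is indeed necessary for the lifting argument, so it belongs in the body of the proof rather than as an afterthought.
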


\begin{proof}
The set $\overline{N}$ is clearly a submonoid of $\widehat{A^*}$.

Assume that $N$ is stable.
Let $u,v,w\in \F A$ be such that $uv,w,u,vw\in \overline{N}$.
Let $(u_n),(v_n)$ and $(w_n)$ be  sequences of
words  converging   to $u,v$ and $w$, respectively,
with $u_n,w_n\in N$. 
Since, by Theorem~\ref{t:closure-of-rational-lang},
the set $\overline{N}$ is open in $\F A$, we have $u_nv_n,v_nw_n\in\overline{N}$ for all large enough $n$, and thus also
$u_nv_n,v_nw_n\in N$ for all large enough $n$ (indeed, $N=\overline{N}\cap A^*$, as the elements
of $A^*$ are isolated in $\F A$).
Because $N$ is stable, this implies that $v_n\in N$
for all large enough $n$, which in turn implies that $v\in\overline{N}$.
Thus $\overline{N}$ is stable.

The proofs of the second and third implications
are similar.
\end{proof}
 
 \subsection{The syntactic monoid}
 \label{sec:syntactic-monoid}

Fix a finite alphabet $A$.
For a language $L$ of~$A^*$,
and for $u\in A^*$,
let
\begin{equation*}
C_L(u)=\{(x,y)\in A^*\times A^*:xuy\in L\}  
\end{equation*}
be the context of $u$ in $L$.
Let $\Syn(L)$ be the syntactic monoid of $L$
and let $\eta_L\colon A^*\to \Syn(L)$ be
the corresponding syntactic homomorphism.
Recall that $\eta_L(u)=\eta_L(v)$
if and only if $C_L(u)=C_L(v)$.

Sometimes, one considers the \emph{syntactic order} in $\Syn(L)$, which is defined by
$\eta_L(u)\leq \eta_L(v)$ if $C_L(u)\subseteq C_L(v)$.
This notion goes back to Sch\"utzenberger~\cite{Schutzenberger:1956b}, and was rediscovered in~\cite{Pin:1995a}.\footnote{The reader is warned that
in~\cite{Pin:1995a}, and some other later papers, it is adopted the order which is the reverse of ours and of Sch\"utzenberger's, and also of
the more recent paper~\cite{Almeida&Cano&Klima&Pin:2015}.}
The syntactic order is compatible with multiplication, in
the sense that
$\eta_L(u)\leq \eta_L(v)$
implies
$\eta_L(wu)\leq \eta_L(wv)$
and
$\eta_L(uw)\leq \eta_L(vw)$
for every $w\in A^*$.
This makes $\Syn(L)$ an \emph{ordered monoid},
a monoid endowed with a partial order compatible with multiplication
(see~\cite{Pin:1995a} for details).
We remark that in a finite ordered monoid the restriction of the partial
order to a subgroup is always the identity (cf.~\cite[Lemma 6.4]{Pin:1996b}).

In case $L$ is a rational language, that is, in case
$\Syn(L)$ is finite, we may also consider the unique
continuous homomorphism $\hat\eta_L$ from $\F A$ to $\Syn(L)$ extending 
$\eta_L$. This leads to the consideration of the set
\begin{equation*}
  C_{\overline L}(u)=\{(x,y)\in\F A:xuy\in\overline{L}\}.
\end{equation*}

The following lemma, proved in~\cite{ACosta:2006},
will be used several times and without reference. 

\begin{Lemma}\label{l:syntactic-inequality-profinite-version}
  Suppose $L$ is a rational language.
  Then $\hat\eta_L(u)\leq \hat\eta_L(v)$ if and only if
  $C_{\overline L}(u)\subseteq C_{\overline L}(v)$.
\end{Lemma}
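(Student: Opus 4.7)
The plan is to reduce the profinite statement to the classical definition of the syntactic order by using two basic facts: that $\overline{L} = \hat\eta_L^{-1}(\eta_L(L))$ when $L$ is rational, and that $\eta_L(L)$ is upward-closed in the syntactic order on $\Syn(L)$.

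First I would establish the preparatory equality $\overline{L} = \hat\eta_L^{-1}(\eta_L(L))$. The inclusion $\overline{L}\subseteq \hat\eta_L^{-1}(\eta_L(L))$ is immediate because the right-hand side is closed (continuous preimage of a subset of a discrete finite monoid) and contains $L$. For the reverse inclusion, given $w\in\F A$ with $\hat\eta_L(w)\in\eta_L(L)$, approximate $w$ by a sequence $(w_n)$ in $A^*$; since $\Syn(L)$ is discrete and finite, $\hat\eta_L(w_n)=\hat\eta_L(w)\in\eta_L(L)$ for all large $n$, so $w_n\in\eta_L^{-1}(\eta_L(L))=L$, giving $w\in\overline{L}$. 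I would also record that $\eta_L(L)$ is upward-closed: if $\eta_L(u)\leq\eta_L(v)$ and $u\in L$, then $(1,1)\in C_L(u)\subseteq C_L(v)$, so $v\in L$.

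For the forward direction, assume $\hat\eta_L(u)\leq \hat\eta_L(v)$ and pick $(x,y)\in C_{\overline L}(u)$, so $xuy\in\overline{L}$. By compatibility of the syntactic order with multiplication (which transfers from $\Syn(L)$ to arbitrary $\F A$-representatives via $\hat\eta_L$), we get $\hat\eta_L(xuy)\leq \hat\eta_L(xvy)$. By the preparatory equality, $\hat\eta_L(xuy)\in\eta_L(L)$, and by upward closure $\hat\eta_L(xvy)\in\eta_L(L)$, whence $xvy\in\overline{L}$.

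For the converse, choose $u',v'\in A^*$ with $\eta_L(u')=\hat\eta_L(u)$ and $\eta_L(v')=\hat\eta_L(v)$, which exist because $\eta_L(A^*)=\Syn(L)$. For any $(x,y)\in A^*\times A^*$, we have the chain of equivalences
\begin{equation*}
xu'y\in L \iff \hat\eta_L(xuy)\in\eta_L(L) \iff xuy\in\overline{L},
\end{equation*}
and similarly for $v'$ and $v$. Hence the assumption $C_{\overline L}(u)\subseteq C_{\overline L}(v)$, restricted to pairs $(x,y)\in A^*\times A^*$, yields $C_L(u')\subseteq C_L(v')$, which by definition of the syntactic order gives $\eta_L(u')\leq \eta_L(v')$, i.e.\ $\hat\eta_L(u)\leq \hat\eta_L(v)$. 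The only mildly delicate step is the preparatory equality $\overline{L}=\hat\eta_L^{-1}(\eta_L(L))$, where rationality of $L$ is crucial; everything else is a mechanical transfer of the finite-word definition across the continuous extension $\hat\eta_L$.
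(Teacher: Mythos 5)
Your proof is correct. Note that the paper does not actually prove this lemma -- it is quoted from \cite{ACosta:2006} -- so there is no in-text argument to compare against; your reduction via the two facts $\overline{L}=\hat\eta_L^{-1}(\eta_L(L))$ and the upward closure of $\eta_L(L)$ in the syntactic order is the standard route and all steps (including the transfer of order-compatibility to profinite multipliers, which only uses that $\hat\eta_L(x)$ lies in the finite ordered monoid $\Syn(L)$) are sound.
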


In what follows we consider
deterministic trim automata, not necessarily complete.
Recall that the syntactic monoid of $L$ is (isomorphic to) the transition monoid of the minimal deterministic automaton $\mathcal M_L$ of $L$. For this reason, when $L$ is rational, if $p$ and $q$ are states of $\mathcal M_L$, and
if $u$ is an element of $\F A$, we can write
$p\cdot u=q$ with the meaning that $p\cdot \hat\eta_L(u)=q$.
In such a setting, it is straightforward to see that
if $i$ is the initial state of $\mathcal M_L$ and $t$ is one of its final states, then, for every $u\in\F A$, one has $i\cdot \hat\eta_L(u)=t$ 
if and only if $u\in\overline{L}$.
In the proof of the following proposition, we use this fact,
as well as the fact that if $X$ is a prefix code, then the initial state $i_X$
of $\mathcal M_{X^\ast}$ is the unique final state of $\mathcal M_{X^\ast}$.

\begin{Prop}\label{p:idempotents-are-in-the-closure-of-Xast}
  Let $F$ be a recurrent subset of $A^*$.
  Let the subset $X$ of~$F$ be a rational $F$-maximal bifix code of $A^*$. If $e$ is an idempotent belonging to~$\overline{F}$, then
  $e\in\overline{X^\ast}$.
\end{Prop}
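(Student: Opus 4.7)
The plan is to approximate $e$ by words in $F$, lift these approximations into $X^\ast$ on both sides using the completeness of $X$ relative to $F$, and then use the topological unitary properties of $\overline{X^\ast}$ together with $e^2=e$ to conclude.

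First I would observe that, since $X$ is $F$-maximal bifix, it is in particular both an $F$-maximal prefix code and an $F$-maximal suffix code, and $F$ is factorial (being recurrent). Hence, by Proposition~\ref{p:right-F-complete} and its dual, $X$ is both right $F$-complete and left $F$-complete: every $u\in F$ is a prefix of some word in $X^\ast$ and a suffix of some word in $X^\ast$. Moreover, since $X$ is bifix, $X^\ast$ is simultaneously right unitary and left unitary in $A^\ast$; and since $X$ is rational, $X^\ast$ is recognizable, so by Proposition~\ref{p:propositionUnitary} the closure $\overline{X^\ast}$ inherits both unitarity properties inside $\widehat{A^\ast}$.

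The key step is the following construction. Pick a sequence $(u_n)$ in $F$ with $u_n\to e$ (available because $F$ is dense in $\overline{F}$ and $\widehat{A^\ast}$ is metric). For each $n$, choose $p_n, q_n\in A^\ast$ with $p_nu_n\in X^\ast$ and $u_nq_n\in X^\ast$, which exist by left and right $F$-completeness. By compactness of $\widehat{A^\ast}$, extract subsequences along which $p_n\to p$ and $q_n\to q$ in $\widehat{A^\ast}$. Then
\begin{equation*}
p_nu_n \to pe,\qquad u_nq_n \to eq,\qquad (p_nu_n)(u_nq_n)=p_nu_n^{\,2}q_n \to pe^2q=peq,
\end{equation*}
where the last equality uses the idempotency $e^2=e$. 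Each of these sequences lies in $X^\ast$ (the third because $X^\ast\cdot X^\ast\subseteq X^\ast$), so $pe,\ eq,\ peq\in\overline{X^\ast}$.

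The conclusion then falls out by a double application of the unitary properties of $\overline{X^\ast}$: from $pe\in\overline{X^\ast}$ and $(pe)\cdot q=peq\in\overline{X^\ast}$, right unitarity yields $q\in\overline{X^\ast}$; from $q\in\overline{X^\ast}$ and $e\cdot q=eq\in\overline{X^\ast}$, left unitarity yields $e\in\overline{X^\ast}$. I anticipate the main obstacle being the temptation to take $p_n,q_n$ bounded in length so as to read off $e\in\overline{X^\ast}$ directly from $p_nu_nq_n\in X^\ast$; this is not available in general, and the clean way around it is precisely to allow $p_n,q_n$ to run to arbitrary pseudowords and let Proposition~\ref{p:propositionUnitary} absorb them afterwards.
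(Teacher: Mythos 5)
Your overall strategy works and its final step is genuinely different from the paper's. The paper also approximates $e$ by a sequence $(w_n)$ in $F$ and lifts on one side only, obtaining $ve\in\overline{X^\ast}$ for an accumulation point $v$ of the left completions; it then passes to the minimal automaton $\mathcal M_{X^\ast}$, where $i_X\cdot ve=i_X$ and idempotency gives $i_X\cdot e=i_X\cdot ve\cdot e=i_X\cdot ve^2=i_X\cdot ve=i_X$, hence $e\in\overline{X^\ast}$. You instead stay inside $\F A$ and use Proposition~\ref{p:propositionUnitary}; both arguments hinge on $e^2=e$ in the same way. Note that your two-sided construction is more than you need: from $pe\in\overline{X^\ast}$ alone you have $pe\cdot e=pe^2=pe\in\overline{X^\ast}$, and a single application of right unitarity already yields $e\in\overline{X^\ast}$, so the right completions $q_n$ and the second unitarity step can be dropped entirely.

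There is, however, a gap in your opening step. The assertion that an $F$-maximal bifix code is automatically an $F$-maximal prefix code and an $F$-maximal suffix code is not a formal consequence of the definitions: maximality among bifix codes contained in $F$ does not by itself give maximality among prefix codes contained in $F$. The correct route, which the paper follows, is to first observe that $X$ is $F$-thin because it is rational (Proposition~\ref{p:drop-F-thin}) and then invoke \cite[Theorem 4.2.2]{Berstel&Felice&Perrin&Reutenauer&Rindone:2012}, which states that an $F$-thin $F$-maximal bifix code is left $F$-complete (and, dually, right $F$-complete). Since rationality of $X$ is among your hypotheses, the gap is easily repaired, but as written the completeness you rely on is unjustified. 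The remainder of your argument --- the limit computations giving $pe,\ eq,\ peq\in\overline{X^\ast}$ and the two applications of unitarity via Proposition~\ref{p:propositionUnitary}, which applies because $X^\ast$ is recognizable and biunitary --- is correct.
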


\begin{proof}
  Recall that $X$ is $F$-thin, by Proposition~\ref{p:drop-F-thin}.
  Let $(w_n)_n$ be a sequence of elements of $F$ converging to $e$.
    It is shown in~\cite[Theorem 4.2.2]{Berstel&Felice&Perrin&Reutenauer&Rindone:2012}
    that an $F$-thin and $F$-maximal bifix code is left $F$-complete,
    whence there is $v_n\in A^*$ such that $v_nw_n\in X^\ast$.
  Therefore, if $v$ is an accumulation point of the sequence
  $(v_n)_n$, we have $ve\in \overline{X^\ast}$,
  and so in $\mathcal M_{X^\ast}$
  we have $i_X\cdot ve=i_X$.
  As $e$ is idempotent, it follows that $i_X\cdot e=i_X$,
  establishing that $e\in\overline{X^\ast}$.
\end{proof}

Consider a rational code $X$ of $A^*$.
Let $J(X)$ denote\footnote{Notice that $X$ is never recurrent, and so there is no risk of confusion with the notation $J(F)$ when $F$ is a recurrent set.} the minimum ideal of $\Syn(X^\ast)$, and let $G(X)$ be the Sch\"utzenberger group of
$J(X)$. One says that $G(X)$ is the \emph{group of $X$}.
This notion is introduced in~\cite{Berstel&Perrin&Reutenauer:2010} for the larger class of the so called \emph{very thin codes}, but we restrict to the simpler setting of rational codes, as our main results concern them. The reader should have in mind
this when checking the references. 
Additionally, consider a recurrent subset $F$
of~$A^*$. We shall denote by $J_F(X)$ the regular $\mathcal J$-class containing $\hat\eta_{X^\ast}(J(F))$, and by $G_F(X)$ the Sch\"utzenberger group
of $J_F(X)$. One says that $G_F(X)$ is the \emph{$F$-group of $X$}.
Using a simple continuity argument one sees that $J_F(X)$
is the minimum $\mathcal J$-class of $\Syn(X^\ast)$ that
has nonempty intersection with $\eta_{X^\ast}(F)$.
For that reason, we may say that $J_F(X)$ is the \emph{$F$-minimum $\mathcal J$-class} of $\Syn(X^*)$.

Recall that
in a monoid of (partial) transformations,
the images of the transformations in the same $\mathcal J$-class
have the same cardinal,
the common cardinal being the \emph{rank} of
the $\mathcal J$-class.
Viewing the syntactic monoid $M(L)$ as a partial transformation
monoid acting in the minimal automaton $\mathcal M_L$,
the rank of a word $u$ in $\mathcal M_L$ is the rank of $\eta_L(u)$.
The $F$-degree of a rational $F$-maximal bifix code
is closely related with the rank
of the elements of $J_F(X)$, as seen next (for which the reader should have in mind Remark~\ref{rmk:rational-codes-are-thin}).

\begin{Prop}[{\cite[cf.~Proposition 7.1.3 and Lemma 7.1.4]{Berstel&Felice&Perrin&Reutenauer&Rindone:2012}}]\label{p:d-parses-implies-you-are-in-JFX}
  Let $F$ be a recurrent subset and let $X$
  be a rational $F$-maximal bifix code.
  Take $u\in F$. Then $u$ has rank $d_F(X)$ in the minimal automaton
  of~$X^\ast$ if and only if $\eta_{X^\ast}(u)\in J_F(X)$.
  If $\delta_X(u)=d_F(X)$, then $u$ has rank $d_F(X)$.
\end{Prop}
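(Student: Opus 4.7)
The plan is to prove the proposition by combining two ingredients: a combinatorial identity $\operatorname{rank}(\eta_{X^\ast}(u))=\delta_X(u)$ valid for $u\in F$, where the rank is that of the induced partial transformation on $\mathcal M_{X^\ast}$, and the $\mathcal J$-class structure of $\Syn(X^\ast)$. Let $r_F$ denote the common rank of all elements of $J_F(X)$; everything reduces to showing $r_F=d_F(X)$ and that rank characterises $J_F(X)$ inside $\eta_{X^\ast}(F)$.

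First I would prove the unconditional lower bound $\operatorname{rank}(\eta_{X^\ast}(u))\geq\delta_X(u)$ for every $u\in A^\ast$. By Remark~\ref{rmk:number-of-special-prefixes}, each parse of $u$ is encoded by a prefix $v$ of $u$ not in $A^\ast X$, yielding $u=vxu'$ with $x\in X^\ast$ and $u'\notin XA^\ast$. The key observation is that for a suitable state $q$ of $\mathcal M_{X^\ast}$ with $q\cdot v=i_X$ (where $i_X$ is the initial and unique terminal state), one computes $q\cdot u=q\cdot v\cdot x\cdot u'=i_X\cdot u'$; distinct parses produce distinct target states via the suffix-code half of the bifix property, so the image of $\eta_{X^\ast}(u)$ contains at least $\delta_X(u)$ elements. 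For the matching upper bound $\operatorname{rank}(\eta_{X^\ast}(u))\leq\delta_X(u)$ when $u\in F$, I would run the assignment in reverse: given an image state, backtrack through the automaton via a word $w$ arranged so that $wu\in F$ (using left $F$-completeness of the $F$-thin, $F$-maximal bifix code $X$, itself a consequence of Proposition~\ref{p:drop-F-thin} and Theorem~\ref{t:characterization-of-finite-degree-codes}), and use bifix uniqueness to recover a distinct parse of $u$.

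Next, to identify $r_F=d_F(X)$, pick $w_0\in F$ with $\delta_X(w_0)=d_F(X)$; such a $w_0$ exists because $d_F(X)<\infty$ by Remark~\ref{rmk:rational-codes-are-thin}. By Lemma~\ref{l:parse-inequalities}, every $F$-extension of $w_0$ also has parse count $d_F(X)$. I would then combine recurrence of $F$ (to sandwich $w_0$ with any $v\in F$ whose image lies in $J_F(X)$ inside a single word of $F$) and the stability property of the finite monoid $\Syn(X^\ast)$ to conclude $\eta_{X^\ast}(w_0)\in J_F(X)$. The combinatorial identity of the previous step then gives $r_F=\delta_X(w_0)=d_F(X)$.

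Both assertions of the proposition follow. For $u\in F$, $\mathcal J$-monotonicity of rank in a transformation monoid together with the $\mathcal J$-minimality of $J_F(X)$ in $\eta_{X^\ast}(F)$ give $\operatorname{rank}(\eta_{X^\ast}(u))\geq r_F=d_F(X)$; the identity $\operatorname{rank}=\delta_X$ on $F$ upgrades equality to $\delta_X(u)=d_F(X)$, and the stability argument of the previous paragraph then forces $\eta_{X^\ast}(u)\in J_F(X)$, the converse being immediate from $r_F=d_F(X)$. The final sentence of the proposition is then the combinatorial identity applied to $u\in F$ with $\delta_X(u)=d_F(X)$. I expect the main obstacle to be the injectivity and surjectivity at the heart of the combinatorial identity --- distinct parses corresponding to distinct image states under $\eta_{X^\ast}(u)$ --- together with the stability--recurrence step that places $\eta_{X^\ast}(w_0)$ \emph{precisely} inside $J_F(X)$ and not merely in some $\mathcal J$-class above it.
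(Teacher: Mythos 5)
First, a contextual note: the paper does not prove this proposition at all --- it is imported verbatim from \cite[Proposition 7.1.3 and Lemma 7.1.4]{Berstel&Felice&Perrin&Reutenauer&Rindone:2012}, so there is no in-paper proof to compare against. Your proposal must therefore stand on its own, and it does not: the ``combinatorial identity'' $\operatorname{rank}(\eta_{X^\ast}(u))=\delta_X(u)$ for all $u\in F$, on which the whole architecture rests, is false. Only the lower bound $\operatorname{rank}(\eta_{X^\ast}(u))\geq\delta_X(u)$ survives; the upper bound fails whenever $u$ is an internal factor of $X$. The paper itself refutes the identity in the sentence immediately following the proposition: for a group code $Z$ of degree $d$ (a rational $A^*$-maximal bifix code, with $F=A^*$ recurrent), \emph{every} word has rank $d$ in $\mathcal M_{Z^\ast}$, yet internal factors of $Z$ --- the empty word, for one --- satisfy $\delta_Z(u)<d$. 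A nonempty, non-group example is Example~\ref{eg:naive-sofic-example}: there the letter $a$ has image $\{1,2,5\}$ in $\mathcal M_{Z^\ast}$, hence rank $3$, while $\delta_Z(a)=2$. The reason your ``backtracking'' argument for the upper bound breaks is that a state $p=q\cdot u$ with $q=i_X\cdot w$ gives $p=i_X\cdot u'$ where $u'$ is the residue of $wu$ after stripping its longest prefix in $X^\ast$; when the whole run from $q$ through $u$ stays strictly inside a single codeword, that residue $u'$ properly contains $u$ and is \emph{not} a suffix of $u$, so it yields no parse of $u$. This is exactly why the cited Lemma 7.1.4 carries the hypothesis $\delta_X(w)=d_F(X)$ (no internal-factor obstruction) rather than holding for all $w\in F$.

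The damage is not confined to a side remark: your proof of the forward implication ``$\operatorname{rank}(\eta_{X^\ast}(u))=d_F(X)\Rightarrow\eta_{X^\ast}(u)\in J_F(X)$'' passes through the intermediate claim $\delta_X(u)=d_F(X)$, obtained precisely by applying the false upper bound, and that intermediate claim is what the paper explicitly warns is \emph{not} implied by having rank $d_F(X)$. So this half of the equivalence is unproven. A secondary gap: the step placing $\eta_{X^\ast}(w_0)$ in $J_F(X)$ for a word $w_0$ with $\delta_X(w_0)=d_F(X)$ is under-argued. Recurrence plus minimality of $J_F(X)$ give you $\eta_{X^\ast}(w_0)\geq_{\mathcal J}J_F(X)$ for free (as for every element of $\eta_{X^\ast}(F)$), but the needed inequality is the other one, $\eta_{X^\ast}(w_0)\leq_{\mathcal J}j$ for some $j\in J_F(X)$, and stability only converts an already-known $\mathcal J$-equivalence into an $\mathcal R$- or $\mathcal L$-equivalence --- it cannot produce the $\mathcal J$-equivalence in the first place. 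Note also that equality of ranks does not force $\mathcal R$- or $\mathcal J$-equivalence in a proper submonoid of the full partial transformation monoid, so the rank bookkeeping alone cannot close this. What is salvageable is your lower bound (each parse $(v,x,u')$ of $u\in F$ contributes the state $i_X\cdot u'$, and left unitarity of $\overline{X^\ast}$ makes $u'\mapsto i_X\cdot u'$ injective on the relevant suffixes) and the observation that $\delta_X$ is maximal and constant on all $F$-extensions of $w_0$; but the heart of the proposition --- the correspondence between image states and suffix-parses, valid only under $\delta_X(u)=d_F(X)$, and the identification of $J_F(X)$ as the locus of minimal rank --- still has to be supplied along the lines of the cited Lemma 7.1.4.
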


The converse of the last implication in
Proposition~\ref{p:d-parses-implies-you-are-in-JFX}
fails: if $Z$ is a group code of $A^*$ of degree $d$,
then every word $u$ of $A^*$ has rank $d$ in the minimal automaton of~$Z^\ast$.

It is also well known that in a transformation monoid, a subgroup
contained in a $\mathcal J$-class of rank $n$ embeds in $S_n$. We shall revisit
this fact in Section~\ref{eg:not-H-equivalent}. Meanwhile, we register
the following corollary of Proposition~\ref{p:d-parses-implies-you-are-in-JFX}.

\begin{Cor}\label{c:upper-bound-size-FGroups}
   Let $F$ be a recurrent subset and let $X$
   be a rational $F$-maximal bifix code.
   Then, we have $G_F(X)\leq S_{d_F(X)}$.
   In particular, if $Z$ is a rational maximal
   bifix code then $G(Z)\leq S_{d(Z)}$.
\end{Cor}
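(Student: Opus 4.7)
The plan is to combine Proposition~\ref{p:d-parses-implies-you-are-in-JFX} with the classical embedding of a maximal subgroup of a transformation monoid into a symmetric group. First, I would view $\Syn(X^\ast)$ as a monoid of partial transformations acting on the minimal automaton $\mathcal M_{X^\ast}$. By Proposition~\ref{p:d-parses-implies-you-are-in-JFX}, the rank (in the transformation-monoid sense) of every element of $J_F(X)$ is precisely $d_F(X)$: indeed, for each $u\in A^*$ with $\eta_{X^\ast}(u)\in J_F(X)$, that proposition guarantees rank $d_F(X)$, and by Proposition~\ref{p:maximal-onto-maximal}-style arguments on $\mathcal{J}$-classes (or directly, since rank is a $\mathcal J$-class invariant in transformation monoids), this is the common rank of all elements of $J_F(X)$.

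Next, $G_F(X)$ is by definition isomorphic to any maximal subgroup of $\Syn(X^\ast)$ contained in $J_F(X)$. I would then invoke the well-known fact recalled in the paragraph preceding the statement: in a transformation monoid, a subgroup contained in a $\mathcal J$-class of rank $n$ embeds in $S_n$. Applied here with $n=d_F(X)$, this yields $G_F(X)\leq S_{d_F(X)}$, establishing the first assertion.

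For the second assertion, I would specialize the first part to $F=A^\ast$. The set $A^\ast$ is trivially recurrent (take $w=1$ in the definition), and a rational maximal bifix code $Z$ is in particular $A^\ast$-maximal; by definition $d_{A^\ast}(Z)=d(Z)$. Moreover, since $\eta_{Z^\ast}(A^\ast)=\Syn(Z^\ast)$ meets every $\mathcal J$-class, the $F$-minimum $\mathcal J$-class $J_{A^\ast}(Z)$ coincides with the minimum ideal $J(Z)$ of $\Syn(Z^\ast)$, and hence $G_{A^\ast}(Z)=G(Z)$. The first part then yields $G(Z)\leq S_{d(Z)}$.

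There is no real obstacle: the argument is a direct bookkeeping of definitions once Proposition~\ref{p:d-parses-implies-you-are-in-JFX} and the symmetric-group embedding are in hand. The only point that needs care is checking that, in the specialization, $J_{A^\ast}(Z)$ is indeed the minimum ideal of $\Syn(Z^\ast)$, which follows from the surjectivity of $\eta_{Z^\ast}$ on $A^\ast$.
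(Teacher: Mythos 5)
Your proposal is correct and follows essentially the same route as the paper, which presents this statement as an immediate consequence of Proposition~\ref{p:d-parses-implies-you-are-in-JFX} (the elements of $J_F(X)$ have rank $d_F(X)$ in $\mathcal M_{X^\ast}$) combined with the classical fact, recalled in the preceding paragraph, that a subgroup in a $\mathcal J$-class of rank $n$ of a transformation monoid embeds in $S_n$. Your verification that the second assertion is the specialization $F=A^\ast$ (with $J_{A^\ast}(Z)=J(Z)$ and $d_{A^\ast}(Z)=d(Z)$) is exactly the intended reading.
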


\section{Parses of a pseudoword}

The generalization to pseudowords of the notion of parse was introduced
in~\cite{Kyriakoglou&Perrin:2017}. In this section, we show that for rational sets with finite $F$-degree, the number of parses is continuous, if we endow $\mathbb N$ with
the discrete topology. This property and a direct consequence of
it are used in Section~\ref{sec:f-group-rational}.

\subsection{The number of parses of a pseudoword is continuous}

Let $X$ be a subset of $A^*$.
A parse of a pseudoword  $w$ with respect
to $X$ is a triple $(v,x,u)$
such that $w=vxu$ with $v\in \F A\setminus \overline{A^* X}$, $x\in \overline{X^\ast}$
and $u\in \F A\setminus \overline{X A^*}$.
If $w\in A^*$, then this is precisely the notion of parse
we already gave for elements of $A^*$,
because for every language $L$ of $A^*$ we have $\overline{L}\cap A^*=L$.
Let $\delta_X(w)$ be the number of parses of $w$, whenever $w\in\F A$.

We begin with a couple of preparatory technical lemmas.

\begin{Lemma}\label{l:explosion-on-the-number-of-parses}
  Let $X$ be a rational subset of $A^*$, and let $w\in \F A$.
  Suppose that~$v$ is a prefix of $w$
  such that $v\in\F A\setminus\overline{A^* X}$
  and such that $v=vt$ for some $t\in \F A$ with $t\neq 1$.
  Consider a positive integer $\ell$
  and let $(w_n)_n$ be a sequence of elements of $A^*$
  converging to $w$.
  There is $n_0$ such that if $n>n_0$ then~$\delta_{X}(w_n)>\ell$.
\end{Lemma}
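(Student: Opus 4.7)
My plan is to produce, for each sufficiently large $n$, at least $\ell+1$ distinct prefixes of $w_n$ lying in $A^*\setminus A^*X$. By Remark~\ref{rmk:number-of-special-prefixes}, this immediately yields $\delta_X(w_n)\ge\ell+1>\ell$, and it avoids any need to assume that $X$ is a prefix code.

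The key observation is that the equality $v=vt$ iterates: by induction $v=vt^k$ for every $k\ge 1$. Writing $w=vz$, possible since $v$ is a prefix of $w$, I then have in $\F A$ the factorization $w=v\cdot t\cdot t\cdots t\cdot z$ with $k$ copies of $t$, for any $k$ I choose. Fixing $k=\ell$ and applying Lemma~\ref{l:refine-factorization} iteratively (unwinding one factor of $t$ at a time), I obtain, for each $n$, a factorization in $A^*$
\[
  w_n \;=\; v_n\cdot t_n^{(1)}\cdot t_n^{(2)}\cdots t_n^{(k)}\cdot z_n
\]
with $v_n\to v$, $t_n^{(i)}\to t$ for each $i\in\{1,\dots,k\}$, and $z_n\to z$. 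Setting $p_i^{(n)}:=v_n\cdot t_n^{(1)}\cdots t_n^{(i)}$ for $0\le i\le k$ then gives $k+1$ prefixes of $w_n$ with $p_i^{(n)}\to vt^i=v$ as $n\to\infty$.

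Two things need to be checked for large enough $n$. First, the $p_i^{(n)}$ are pairwise distinct: since $t\ne 1$ and $\{1\}$ is open in $\F A$ (because $1$ is isolated), each $t_n^{(i)}$ is a nonempty word for large $n$, so $|p_0^{(n)}|<|p_1^{(n)}|<\cdots<|p_k^{(n)}|$. Second, each $p_i^{(n)}$ lies in $A^*\setminus A^*X$: since $X$ is rational, so is $A^*X$, and Theorem~\ref{t:closure-of-rational-lang} makes $\overline{A^*X}$ open in $\F A$, hence clopen; its complement is therefore an open set containing $v$, and the convergence $p_i^{(n)}\to v$ places $p_i^{(n)}$ in this complement for large $n$, which together with $p_i^{(n)}\in A^*$ and $\overline{A^*X}\cap A^*=A^*X$ forces $p_i^{(n)}\in A^*\setminus A^*X$.

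I do not foresee a serious obstacle: the argument is driven entirely by the clopenness of $\overline{A^*X}$ (from rationality of $X$ via Theorem~\ref{t:closure-of-rational-lang}), the factorization-lifting provided by Lemma~\ref{l:refine-factorization}, and the idempotent-like identity $v=vt^k$ inherited from $v=vt$. The one point requiring care is the uniform choice of $n_0$: each of the conditions ``$t_n^{(i)}\ne 1$'' and ``$p_i^{(n)}\notin A^*X$'' holds only from some threshold onwards, so $n_0$ must be taken as the maximum of these finitely many thresholds indexed by $i\in\{0,1,\ldots,k\}$.
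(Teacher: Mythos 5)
Your proof is correct and follows essentially the same route as the paper's: iterate $v=vt$ to get $v=vt^{\ell}$, lift the factorization $w=vt^{\ell}z$ to the $w_n$ via Lemma~\ref{l:refine-factorization}, and use the openness of $\F A\setminus\overline{A^*X}$ (Theorem~\ref{t:closure-of-rational-lang}) to produce $\ell+1$ distinct prefixes of $w_n$ outside $A^*X$ for large $n$. The only cosmetic difference is that you explicitly cite Remark~\ref{rmk:number-of-special-prefixes} for the final count, which the paper leaves implicit.
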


\begin{proof}
  Let $u\in\F A$ be such that $w=vu$.
  Since $w=vt^\ell u$, and in view of Lemma~\ref{l:refine-factorization}, we can consider factorizations 
  \begin{equation*}
    w_n=v_nt_{n,1}t_{n,2}\cdots t_{n,\ell} u_n
  \end{equation*}
  such that the sequences
  $(v_n)_n$, $(t_{n,i})_n$  and $(u_n)_n$ respectively converge
  to $v$, $t$ and $u$,
  where $i$ is any element of $\{1,\ldots,\ell\}$.
  As $t\neq 1$, for all large enough $n$ and every $i\in \{1,\ldots,\ell\}$,
  we have $t_{n,i}\neq 1$.
  For each $j\in \{0,1,\ldots,\ell\}$,
  let $v_{n,j}=v_nt_{n,1}t_{n,2}\cdots t_{n,j}$, with $v_{n,0}=v_n$.
  Notice that $\lim v_{n,j}=vt^j=v$.
  Since $X$ is rational, the set
  $\overline{A^*\setminus A^* X}=\F A\setminus\overline{A^* X}$
  is an open neighborhood of $v$, and so $v_{n,j}\in A^*\setminus A^* X$
  for all large enough $n$ and every $j\in \{0,1,\ldots,\ell\}$.
  Because $t_{n,j}\neq 1$ for every $j\in \{1,\ldots,\ell\}$,
  the set
  $\{v_{n,0},v_{n,1},\ldots,v_{n,\ell}\}$ has $\ell+1$ elements,
  all of them in $A^*\setminus A^* X$, and all
  of them prefixes of $w_n$, for all large enough $n$.
  This implies that $\delta_{X}(w_n)\geq \ell+1$,
  for all large enough~$n$.
\end{proof}

\begin{Lemma}\label{l:continuity-of-the-parsing-function}
  Consider a rational subset $X$ of $A^*$.
  Let $w\in\F A$. Suppose that $(w_n)_n$ is a sequence of elements of $A^*$ converging
  to $w$.
  For every $d\in\mathbb N$,
  if the set
  $\{\delta_X(w_n):n\in\mathbb N\}$
  is bounded and the set
  $\{n\in\mathbb N:\delta_X(w_n)\geq d\}$
  is infinite, then the inequality $\delta_X(w)\geq d$ holds.
  Conversely, if $\delta_X(w)\geq d$,
  then $\delta_X(w_n)\geq d$
  for all large enough $n$.
  Moreover,  if the integer $d$ satisfies $\delta_X(w)=d$,
  then $\delta_X(w_n)=d$
  for all large enough $n$.
\end{Lemma}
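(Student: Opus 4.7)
My plan is to establish the three assertions in the order written, all by compactness arguments in $\F A$ combined with the fact that the rationality of $X$ makes each of the sets $\overline{A^*X}$, $\overline{X^*}$, and $\overline{XA^*}$ clopen (by Theorem~\ref{t:closure-of-rational-lang}); hence each of the three conditions defining a parse is a clopen condition on the corresponding coordinate and passes cleanly to limits and to approximations. For the converse assertion, I fix $d$ distinct parses $(v^{(i)},x^{(i)},u^{(i)})$ of $w$ and, for each $i$, apply Lemma~\ref{l:refine-factorization} twice to obtain factorizations $w_n = v^{(i)}_n x^{(i)}_n u^{(i)}_n$ with each coordinate converging to the corresponding limit; the clopen conditions guarantee that each approximating triple is a parse of $w_n$ for $n$ large, and Hausdorffness of $\F A$ transfers distinctness of the $d$ limits to distinctness of the $d$ lifted triples.

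For the first (direct) assertion, I restrict to a subsequence with $\delta_X(w_n)\ge d$, select $d$ distinct parses of each $w_n$, and use compactness of $\F A$ together with a diagonal extraction so that all coordinate sequences converge. Each of the $d$ resulting limit triples is then a parse of $w$, so the crux is to show these limits are pairwise distinct. If two of them coincided, then since the corresponding parses of $w_n$ are distinct we would either have (i)~the prefixes $v^{(i)}_n,v^{(j)}_n$ differing for infinitely many $n$, or (ii)~those prefixes agreeing eventually while the $x$-coordinates still differ. In case~(i) I extract a further subsequence with $v^{(j)}_n=v^{(i)}_n s_n$, $s_n\in A^+$, $s_n\to t\in\F A$, and $v\cdot t=v$; the possibility $t=1$ is excluded because $1$ is isolated in $\F A$ while $s_n\ne 1$, so $t\ne 1$ and Lemma~\ref{l:explosion-on-the-number-of-parses} forces $\delta_X(w_n)\to\infty$, contradicting boundedness. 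In case~(ii) the analogous argument on the $x$-coordinate, with leftover $y_n\in A^+$ satisfying $x^{(j)}_n=x^{(i)}_n y_n$ and $u^{(i)}_n=y_n u^{(j)}_n$, yields $u=tu$ with $t\ne 1$; the left-right dual of Lemma~\ref{l:explosion-on-the-number-of-parses} (obtained from the symmetric proof using $\F A\setminus\overline{XA^*}$ in place of $\F A\setminus\overline{A^*X}$) again contradicts boundedness.

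For the moreover, the converse yields $\delta_X(w_n)\ge d$ for $n$ large, so it remains to rule out $\delta_X(w_n)\ge d+1$ on an infinite set. If on that set $\{\delta_X(w_n)\}$ has a bounded subsequence, then the first assertion applied with $d+1$ gives the contradiction $\delta_X(w)\ge d+1>d$. Otherwise $\delta_X(w_n)\to\infty$ along a subsequence, and I expect the main obstacle to lie here: the plan is to pick $K$ parses of $w_n$ for arbitrarily large $K$, apply compactness to obtain $K$ limit parses of $w$ which, since $\delta_X(w)=d$, must lie in a $d$-element set, and then use pigeonhole to locate a cluster of many parses of $w_n$ all converging to one parse $(v,x,u)$ of $w$. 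Running the case analysis of the direct assertion on two members of this cluster produces a structural identity $v=vt$ or $u=tu$ with $t\ne 1$, and the delicate step is to convert this pseudo-periodic structure on a single parse of $w$ into the existence of genuinely distinct further parses of $w$, thereby contradicting $\delta_X(w)=d$; this conversion will require a careful use of Lemma~\ref{l:refine-factorization} to produce the additional factorizations with the correct membership conditions, and is where the bulk of the work is concentrated.
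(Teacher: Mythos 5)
Your treatment of the first two assertions coincides with the paper's proof in every essential respect: the same diagonal/compactness extraction of $d$ convergent parses, the same use of the clopenness of $\overline{A^*X}$, $\overline{X^*}$ and $\overline{XA^*}$ coming from Theorem~\ref{t:closure-of-rational-lang}, the same dichotomy on whether the leftover words are eventually empty, and the same appeal to Lemma~\ref{l:explosion-on-the-number-of-parses} and its left--right dual to force the limit parses apart (the paper phrases your case~(ii) in terms of the third coordinates rather than the middle ones, but by cancellation in $A^*$ these are the same thing); the converse is the same double application of Lemma~\ref{l:refine-factorization} plus openness of the three sets. All of that is correct.

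The divergence is in the ``moreover'' clause, and there your proposal is not a proof: the last paragraph is an announced plan whose ``delicate step'' --- converting the relation $v=vt$ (or $u=tu$) on a single parse of $w$ into genuinely new parses of $w$ --- you leave unexecuted, and it is not clear it can be executed (a prefix $v$ with $v=vt$ yields no \emph{new} prefixes, since $vt^k=v$). The paper does not attempt this either: its proof of the ``moreover'' clause consists of reapplying the first assertion with $d+1$ in place of $d$, which tacitly presupposes that the boundedness hypothesis on $\{\delta_X(w_n)\}$ is still in force. That is also the only regime in which the clause is ever invoked (in the proof of Proposition~\ref{p:the-parse-is-continuous} the sequence lies in $F$ and $X$ has finite $F$-degree, so $\delta_X(w_n)\le d_F(X)$ for all $n$). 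Read this way, your ``bounded subsequence'' subcase is the entire argument and your proof closes; the unbounded subcase you flag simply does not arise. If you insist on the unconditional reading of the ``moreover'' statement, then you have correctly spotted that additional work is needed --- but you have not supplied it, so as written that part of your proposal has a genuine gap, one that you should either fill or dissolve by carrying the boundedness hypothesis through to the final clause as the paper implicitly does.
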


\begin{proof}
  Suppose that
  the set
  $\{\delta_X(w_n):n\in\mathbb N\}$
  is bounded and that
  the set $\{n\in\mathbb N:\delta_X(w_n)\geq d\}$
  is infinite.
  Clearly, by taking subsequences, we are reduced to
  consider the case
  where this set is precisely~$\mathbb N$.
  For each~$n$, and
  with respect to $X$,
  consider $d$ distinct parses
  $p_{n,i}=(\alpha_{n,i},\beta_{n,i},\gamma_{n,i})$ of $w_n$, where $i\in\{1,\ldots, d\}$.
  By compactness, there is a strictly increasing
  sequence of integers
  $(n_k)_k$
  such that the sequence
  \begin{equation}\label{eq:continuity-of-the-parsing-function}
  (p_{n_k,1},p_{n_k,2},\ldots,p_{n_k,d})_k  
  \end{equation}
  converges in $(\F A\times\F A\times\F A)^d$ to
  a $d$-tuple
  $(p_1,p_2,\ldots,p_d)$.
  For each $i\in\{1,\ldots,d\}$,
  let $p_i=(\alpha_i,\beta_i,\gamma_i)$.
  Notice that, since the sets
  $\overline{A^*\setminus A^* X}$,  $\overline{X^\ast}$,
  and $\overline{A^*\setminus XA^*}$ are closed,
  the triple $p_i$ is a parse of $w$.

  Let $i,j$ be distinct elements of $\{1,\ldots,d\}$.
  For each $n$, there is $t_n\in A^*$
  such that
  $\alpha_{n,j}=\alpha_{n,i}t_n$
  or such that $\alpha_{n,i}=\alpha_{n,j}t_n$.
  
  Suppose that $t_{n_k}\neq 1$ for infinitely many $k$, and let
  $t$ be an accumulation point of $(t_{n_k})_k$. Then
  $t\neq 1$, and at least one of the equalities
  $\alpha_j=\alpha_it$
  or
  $\alpha_i=\alpha_jt$
  holds.
  By Lemma~\ref{l:explosion-on-the-number-of-parses},
  and since
  $\{\delta_X(w_n):n\in\mathbb N\}$
  is bounded, we must have $\alpha_j\neq\alpha_i$, whence
  $p_i\neq p_j$.

  On the other hand, if $t_{n_k}=1$, that is, if
  $\alpha_{n_k,i}=\alpha_{n_k,j}$, then,
  because $p_{n_k,i}\neq p_{n_k,j}$,
  we must have
  $\gamma_{n_k,i}\neq \gamma_{n_k,j}$.
  Therefore,
  if $t_{n_k}\neq 1$ for only finitely many~$k$,
  then for infinitely many $k$ we have either
  $\gamma_{n_k,i}\in A^+\gamma_{n_k,j}$
  or
  $\gamma_{n_k,j}\in A^+\gamma_{n_k,i}$.
  By compactness, this implies the existence of some
  $s\in\F A\setminus\{1\}$
  such that
  $\gamma_i=s\gamma_j$
  or
  $\gamma_j=s\gamma_i$.
  Applying the dual of Lemma~\ref{l:explosion-on-the-number-of-parses},
  we deduce that $\gamma_i\neq\gamma_j$, thus $p_i\neq p_j$.

  All cases considered, we conclude that $p_i\neq p_j$
  whenever $i\neq j$. Therefore, we have $\delta_X(w)\geq d$.

  Conversely, suppose that $\delta_X(w)\geq d$.
  Consider a set
  \begin{equation*}
    \bigl\{(v_i,x_i,u_i):i\in\{1,\ldots,d\}\bigr\}
  \end{equation*}
  of $d$ parses of $w$ with respect to $X$.
  By Lemma~\ref{l:refine-factorization}, we can consider factorizations
  \begin{equation*}
    w_n=v_{n,i}x_{n,i}u_{n,i}
  \end{equation*}
  such that $\lim_{n\to\infty}(v_{n,i},x_{n,i},u_{n,i})=(v_i,x_i,u_i)$.
  In particular, for all large enough $n$,
  the set
  \begin{equation}\label{eq:many-parses-2}
    \bigl\{(v_{n,i},x_{n,i},u_{n,i}):i\in\{1,\ldots,d\}\bigr\}
  \end{equation}
  has $d$ elements.
  Since the sets
  $\overline{A^*\setminus A^* X}$,
  $\overline{X^\ast}$
  and $\overline{A^*\setminus A^* X}$
  are open,
  for all large enough $n$
  we have $v_{n,i}\in A^*\setminus{A^* X}$,
  $x_{n,i}\in X^\ast$ and $u_{n,i}\in A^*\setminus{XA^*}$.
  Therefore, the elements of~\eqref{eq:many-parses-2}
  are distinct parses of $w_n$, thus $\delta_X(w_n)\geq d$, for all large enough $n$.
  
  Suppose moreover that $\delta_X(w)=d$.
  Then $\delta_X(w_n)\geq d$ for all large enough~$n$. If
  the set $\{n\in\mathbb N:\delta_X(w_n)\geq d+1\}$
  were infinite, then by what was already proved, we would have $\delta_X(w)\geq d+1$. Therefore, we must have $\delta_X(w_n)=d$ for all large enough $n$.
\end{proof}

We are now ready for the main result of this section.

\begin{Prop}\label{p:the-parse-is-continuous}
  Consider a factorial set $F$ of $A^*$.
  Let $X$ be a rational subset of $F$ with finite $F$-degree $d$.
  Then $\delta_X(w)\leq d$ for every $w\in\overline{F}$,
  and the mapping $\delta_X\colon \overline{F}\to\{1,\ldots,d\}$
  thus defined is continuous when we endow $\{1,\ldots,d\}$
  with the discrete topology.
\end{Prop}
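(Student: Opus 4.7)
The plan is to reduce both claims to Lemma~\ref{l:continuity-of-the-parsing-function}, which already handles sequences of finite words. The bound $\delta_X(w)\le d$ should come from approximation by words in $F$, while continuity on $\overline F$ will need a short diagonal argument since the lemma deals with sequences in $A^*$ rather than sequences in $\overline F$.

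First I would prove the bound. Fix $w\in\overline F$ and choose a sequence $(w_n)_n$ in $F$ with $w_n\to w$; such a sequence exists because $F$ is dense in $\overline F$ and its elements lie in $A^*$. Since $X$ has $F$-degree $d$, one has $\delta_X(w_n)\le d$ for every $n$. If $\delta_X(w)\ge d+1$, the converse implication in Lemma~\ref{l:continuity-of-the-parsing-function} would force $\delta_X(w_n)\ge d+1$ for all large $n$, contradicting the finite $F$-degree. Hence $\delta_X(w)\le d$, and the map $\delta_X\colon\overline F\to\{1,\ldots,d\}$ is well defined.

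For continuity, let $w\in\overline F$ with $\delta_X(w)=d_0\le d$, and let $(w_n)_n$ be any sequence in $\overline F$ converging to $w$. For each $n$, pick a sequence $(x_{n,m})_m$ in $F$ with $x_{n,m}\to w_n$. By the ``moreover'' clause of Lemma~\ref{l:continuity-of-the-parsing-function}, there exists $m_n$ such that $\delta_X(x_{n,m_n})=\delta_X(w_n)$ and $d(x_{n,m_n},w_n)<1/n$ (using that $\{\delta_X(x_{n,m})\}_m$ is bounded by $d$, since $x_{n,m}\in F$). Set $w'_n=x_{n,m_n}\in F\subseteq A^*$. By the ultrametric inequality, $w'_n\to w$. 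Since $\delta_X(w'_n)\le d$ for every $n$ and $\delta_X(w)=d_0$, the ``moreover'' clause of the lemma applies once more to give $\delta_X(w'_n)=d_0$ for all large $n$, whence $\delta_X(w_n)=d_0$ for all large $n$. This is precisely continuity of $\delta_X$ into the discrete space $\{1,\ldots,d\}$.

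The only non-routine ingredient is the diagonal selection $w'_n=x_{n,m_n}$, which lets us transfer Lemma~\ref{l:continuity-of-the-parsing-function} from sequences of words to sequences of pseudowords in $\overline F$. Everything else is a direct invocation of that lemma together with the hypothesis that the $F$-degree is finite, so I do not anticipate a serious obstacle.
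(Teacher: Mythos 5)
Your proof is correct and follows essentially the same route as the paper's: both reduce everything to Lemma~\ref{l:continuity-of-the-parsing-function} and use a diagonal selection of words of $F$ to pass from sequences in $\overline F$ to sequences in $A^*$. The only cosmetic differences are that the paper obtains the bound $\delta_X(w)\le d$ via a maximum-over-infinite-fibres argument instead of your contradiction with the converse implication, and phrases continuity as closedness of each fibre $\delta_X^{-1}(k)$ rather than as sequential continuity.
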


\begin{proof}
  Let $w\in\overline{F}$.
  Suppose that $(w_n)_n$
  is a sequence of elements of $F$ converging to $w$.
  We first claim that $\delta_X(w)=\delta_X(w_n)$ for all sufficiently large $n$.  
  For each $i\in\mathbb N$, let
  \begin{equation*}
    W_i=\{n\in\mathbb N:\delta_X(w_n)=i\}.  
  \end{equation*}
  Since $X$ has finite $F$-degree $d$,
  the set
  \begin{equation*}
    I=\{i\in \mathbb N:W_i\text{ is infinite}\}
  \end{equation*}
  is nonempty and its maximum $M$ is at most $d$.
  By Lemma~\ref{l:continuity-of-the-parsing-function},
  we know that $\delta_X(w)\geq M$.
  If we had $\delta_X(w)\geq M+1$ then,
  again by Lemma~\ref{l:continuity-of-the-parsing-function},
  the set $W_{M+1}$ would be infinite, contradicting the maximality of $M$.
  Therefore, we have $\delta_X(w)=M$.
  By Lemma~\ref{l:continuity-of-the-parsing-function},
  this implies that $\delta_X(w_n)=M$ for all large enough $n$, proving the claim.

  Let $k\in \{1,\ldots,d\}$. Consider a sequence $(u_n)_n$ of elements
  $\F A$ converging to $u$ and such that $\delta_X(u_n)=k$ for every $n$.
  By what was proved in the previous paragraph, for each $n$
  we can find $v_n\in A^*$ such that $d(u_n,v_n)<\frac{1}{n}$
  and $\delta_X(v_n)=k$.
  As
  \begin{equation*}
  d(v_n,u)\leq \max\{d(v_n,u_n),d(u_n,u)\}\xrightarrow[n\to\infty]{}0,
  \end{equation*}
  we have $\lim v_n=u$, and so $\delta_X(u)=k$, also by
  the case considered in the first paragraph.
  Therefore, $\delta_X^{-1}(k)$ is closed. This shows that
  $\delta_X$ is continuous if we endow $\{1,\ldots,d\}$
  with the discrete topology.
\end{proof}

\begin{Rmk}\label{rmk:profinite-version-number-of-special-prefixes}
  Let $X$ be a subset of $A^+$,
  and let $w\in\F A$. Denote by $\tilde{\delta}_X(w)$
  the number of prefixes of $w$ not in $\overline{A^* X}$.
  In the statements of Lemmas~\ref{l:explosion-on-the-number-of-parses} and~\ref{l:continuity-of-the-parsing-function}, and of
  Proposition~\ref{p:the-parse-is-continuous}, if we replace all occurrences of $\delta_X$ by $\tilde{\delta}_X$,
  then we still have true statements, as is easily seen after a
  straightforward adaptation of the respective proofs. In particular, in view of Remark~\ref{rmk:number-of-special-prefixes}, if $X$ is a rational prefix code of finite $F$-degree,
  then $\delta_X(w)=\tilde{\delta}_X(w)$
  for every $w\in\F A$.
  We shall not use these facts.
\end{Rmk}

The following lemma is an easy consequence
of Proposition~\ref{p:the-parse-is-continuous}
that will be used later on.

\begin{Lemma}\label{l:degree-in-JF}
  Consider a recurrent subset $F$ of $A^*$,
  Let $X$ be a rational bifix code of~$A^*$ with finite $F$-degree.
    For every $u\in J(F)$, we have $\delta_X(u)=d_F(X)$.    
\end{Lemma}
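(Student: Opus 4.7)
The plan is to establish both inequalities $\delta_X(u) \leq d_F(X)$ and $\delta_X(u) \geq d_F(X)$. The first is immediate: since $u \in J(F) \subseteq \overline{F}$, Proposition~\ref{p:the-parse-is-continuous} gives $\delta_X(u) \leq d_F(X)$. Writing $d = d_F(X)$, all the substance lies in the reverse inequality.

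Because $d$ is a finite supremum of positive integers attained on $F$, I pick $w_0 \in F$ with $\delta_X(w_0) = d$. The first key step is to show that $w_0$ is a factor of $u$ in $\F A$. Since $\overline{F}$ is the set of factors in $\F A$ of elements of $J(F)$, and $w_0 \in F \subseteq \overline{F}$, there are $v \in J(F)$ and $p, q \in \F A$ with $v = pw_0q$. As $u$ and $v$ lie in the same $\mathcal{J}$-class, I may write $u = rvs$ for some $r, s \in \F A$; substituting, $u = (rp)\, w_0\, (qs)$, so $w_0$ is indeed a factor of $u$.

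The second step transfers this factorization back to finite approximants. Fix any sequence $(u_n)_n$ of elements of $F$ converging to $u$. The language $A^* w_0 A^*$ is rational, so by Theorem~\ref{t:closure-of-rational-lang} its closure, which coincides with $\F A \, w_0\, \F A$, is open in $\F A$. Since this open set contains $u$, it contains $u_n$ for every sufficiently large $n$; intersecting with $A^*$ (which recovers the original language, since finite words are isolated) shows that $w_0$ is a factor of the word $u_n$ in the usual sense. By Lemma~\ref{l:parse-inequalities}, $\delta_X(u_n) \geq \delta_X(w_0) = d$ for all such $n$. Finally, Proposition~\ref{p:the-parse-is-continuous}, applied to the sequence $(u_n)_n \subseteq F$ converging to $u \in \overline{F}$, gives $\delta_X(u) = \delta_X(u_n)$ for all large $n$, whence $\delta_X(u) \geq d$, as required.

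The main obstacle is the first step: promoting $w_0$ from a factor of some element of $J(F)$ to a factor of $u$ itself. This is what forces an appeal to the $\mathcal{J}$-class structure of $J(F)$ inside $\F A$ via the mutual divisibility of $\mathcal{J}$-equivalent elements. The remainder is then a clean two-step passage: an open-set argument transfers a factor relation from the limit $u$ to the approximants $u_n$, and the continuity of $\delta_X$ carries the resulting numerical bound back to $u$.
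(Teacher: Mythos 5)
Your proposal is correct and follows essentially the same route as the paper: pick a word of $F$ attaining $d_F(X)$ parses, arrange a sequence of words of $F$ converging to $u$ that all contain it as a factor, and combine Lemma~\ref{l:parse-inequalities} with the continuity of $\delta_X$ from Proposition~\ref{p:the-parse-is-continuous}. The only difference is that you spell out the step the paper merely asserts --- that such a sequence exists --- via the mutual divisibility of $\mathcal J$-equivalent elements and the openness of $\overline{A^*w_0A^*}$, which is a valid (and arguably the intended) justification.
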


\begin{proof}
  Let $u\in J(F)$ and $v\in F$ be such that $\delta_X(v)=d_F(X)$.
  There is a sequence $(u_n)_n$ of elements of $F$ converging to $u$
  such that $v$ is a factor of $u_n$, for every $n$.
  From Lemma~\ref{l:parse-inequalities}
  and the maximality of $\delta_X(v)$,
  we deduce that $\delta_X(v)=\delta_X(u_n)$. Applying Proposition~\ref{p:the-parse-is-continuous}, we conclude that $\delta_X(u)=d_F(X)$.
\end{proof}

\subsection{The number of parses of $\mathcal H$-equivalent pseudowords}

We conclude this section with some observations that are not applied in the rest of the paper, but that the reader may find interesting. 

Let $v\in\F A$.
Say that $v$ is \emph{left-cancelable} if the following
property holds: $v=vt$ implies $t=1$.
Because $\F A$ is equidivisible~(proved in~\cite{Almeida&ACosta:2007a},
see~\cite{Almeida&ACosta:2017}),
the pseudoword $v$ is  left-cancelable if and only if,
for every $x,y\in\F A$, the equality $vx=vy$ implies $x=y$. Therefore,
if $v$ is left-cancelable and $u\in v\cdot\F A$, then there is a unique $w\in\F A$ such that $u=vw$.
We denote such $w$ by $v^{-1}u$. There is a dual notion of \emph{right-cancelable}
pseudoword $v$, for which, whenever $u\in\F A\cdot v$, we use the notation $uv^{-1}$
for the unique element $w$ of $\F A$ such that $u=wv$.

\begin{Prop}\label{p:how-to-count-parses}
  Consider a factorial subset $F$ of $A^*$.
  Let $X$ be a rational subset
  of $A^*$ with finite $F$-degree.
  For every $u\in\overline{F}$, the number $\delta_X(u)$ is equal to the number
  of pairs $(s,p)$ of pseudowords such that, for some pseudoword~$x$, the triple
  $(s,x,p)$ is a parse of $u$ with respect to $X$.
  When $(s,p)$ is such a pair, $s$ is left-cancelable
  and $p$ is right-cancelable.
\end{Prop}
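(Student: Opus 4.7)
The plan is to establish the two halves of the statement in the reverse of the order in which they appear: first prove the cancelability claim about $s$ and $p$, then deduce the equality of counts as an easy corollary.

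First I would tackle left-cancelability of $s$ (the component lying in $\F A\setminus\overline{A^*X}$ in a parse $(s,x,p)$ of $u$). Suppose, toward a contradiction, that $s=st$ for some $t\in\F A$ with $t\neq 1$. Since $u\in\overline F$, pick a sequence $(u_n)_n$ of elements of $F$ with $u_n\to u$. Now $s$ is a prefix of $u=sxp$ satisfying precisely the hypotheses of Lemma~\ref{l:explosion-on-the-number-of-parses}, so applied with $\ell=d_F(X)$ it gives $\delta_X(u_n)>d_F(X)$ for all sufficiently large $n$. But $u_n\in F$, so $\delta_X(u_n)\le d_F(X)$ by definition of the $F$-degree, a contradiction. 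Right-cancelability of $p$ is established dually, using the obvious right-handed version of Lemma~\ref{l:explosion-on-the-number-of-parses} (the setup is symmetric: $p$ is a suffix of $u$ lying in $\F A\setminus\overline{XA^*}$).

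Once cancelability is in hand, the counting claim is immediate. Let $\Pi$ denote the set of parses of $u$ with respect to $X$, and $\Pi'$ the set of pairs $(s,p)$ appearing as outer components of some parse. The map $\Pi\to\Pi'$ sending $(s,x,p)\mapsto(s,p)$ is surjective by definition of $\Pi'$, so it suffices to show it is injective. If $(s,x,p)$ and $(s,x',p)$ both lie in $\Pi$, then $sxp=u=sx'p$; left-cancelability of $s$ gives $xp=x'p$, and right-cancelability of $p$ then yields $x=x'$. Hence $|\Pi|=|\Pi'|=\delta_X(u)$.

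I do not expect any serious obstacle: the key insight is simply that the ``explosion'' Lemma~\ref{l:explosion-on-the-number-of-parses} forces cancelability on the outer components of any parse of an element of $\overline F$, precisely because the $F$-degree is finite and $u$ is the limit of words in $F$. The cancelability properties formalized in the preamble to the proposition (with $s^{-1}u$ and $up^{-1}$ well-defined) are then available as a bonus, even though only the uniqueness of $x$ given $(s,p)$ is needed to finish the counting argument.
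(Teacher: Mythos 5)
Your proposal is correct and takes essentially the same approach as the paper: both derive left-cancelability of $s$ (and, dually, right-cancelability of $p$) by combining Lemma~\ref{l:explosion-on-the-number-of-parses} with the finiteness of the $F$-degree and the fact that $u$ is a limit of words of $F$, and then conclude that a parse is determined by its outer components. The only difference is the (purely cosmetic) order of presentation.
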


\begin{proof}
  Suppose that $(s,x,p)$ and $(s,y,p)$ are parses of $u$ with respect to~$X$.
  By Lemma~\ref{l:explosion-on-the-number-of-parses},
  if $s=st$ for some $t\neq 1$, then
  there are words in $F$ whose image under $\delta_X$ is arbitrarily large,
  contradicting the assumption that
  $X$ has finite $F$-degree.
  Hence, $s$ is left-cancelable. Dually, $p$ is right-cancelable.
  As $sxp=syp$, it follows that $x=y$. Hence, a parse
  of an element of $\overline{F}$ with respect to $X$ is determined
  by the first and third component.
\end{proof}

The following result establishes a relationship between $\delta_X$
and $\eta_{X^\ast}$.

\begin{Prop}\label{p:link-between-syntactic-homomorphism-and-number-of-parses}
  Consider a factorial subset $F$ of $A^*$.
  Let $X$ be a rational
  subset of $A^*$ with finite $F$-degree.
  Suppose that the pseudowords
  $u$ and $v$ of $\F A$ are
  such that $u\in\overline{F}$,  $u=euf$ and $v=evf$,
  with $e,f$ idempotents and $e\mathrel{\mathcal R}u\mathrel{\mathcal L}f$.
  If $\hat\eta_{X^\ast}(u)\leq\hat\eta_{X^\ast}(v)$ holds, then we have
  $\delta_X(u)\leq\delta_X(v)$.
\end{Prop}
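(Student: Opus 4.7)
The plan is to injectively map parses of $u$ to parses of $v$ by preserving the first and third components: from a parse $(s,x,p)$ of $u$ I will produce $y\in\overline{X^\ast}$ such that $(s,y,p)$ is a parse of $v$. Because $u\in\overline{F}$, Proposition~\ref{p:how-to-count-parses} guarantees that distinct parses of $u$ yield distinct pairs $(s,p)$, so this assignment will automatically be injective and yield $\delta_X(u)\leq\delta_X(v)$.

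To build $y$, I will exploit the Green's relations to produce ``cofactors'' that thread the parse through the idempotents $e$ and $f$. Since $e\mathrel{\mathcal R}u$ in $\F A$, I choose $\alpha\in\F A$ with $u\alpha=e$ and set $\tilde e:=xp\alpha$, so that $s\tilde e=sxp\alpha=u\alpha=e$; dually, I choose $\beta\in\F A$ with $\beta u=f$ and set $\tilde f:=\beta sx$, giving $\tilde fp=f$. Then the chain
\begin{equation*}
s\tilde e\,u\,\tilde fp=(s\tilde e)\,u\,(\tilde fp)=euf=u=sxp,
\end{equation*}
combined with the left-cancelability of $s$ and the right-cancelability of $p$ supplied by Proposition~\ref{p:how-to-count-parses}, delivers $\tilde e\,u\,\tilde f=x\in\overline{X^\ast}$; that is, $(\tilde e,\tilde f)\in C_{\overline{X^\ast}}(u)$.

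At this point the syntactic hypothesis takes over. By Lemma~\ref{l:syntactic-inequality-profinite-version}, $\hat\eta_{X^\ast}(u)\leq\hat\eta_{X^\ast}(v)$ amounts to the inclusion $C_{\overline{X^\ast}}(u)\subseteq C_{\overline{X^\ast}}(v)$, so $y:=\tilde e\,v\,\tilde f\in\overline{X^\ast}$. Then $syp=(s\tilde e)\,v\,(\tilde fp)=evf=v$, and since the original parse of $u$ ensures $s\notin\overline{A^\ast X}$ and $p\notin\overline{XA^\ast}$, the triple $(s,y,p)$ is a parse of $v$, completing the injection. The one step I expect to require genuine insight is the simultaneous construction of $\tilde e$ and $\tilde f$ satisfying $s\tilde e=e$ and $\tilde fp=f$: this is what allows the frame equations $euf=u$ and $evf=v$ to ``commute'' with the parse, converting the inequality between context sets into the inequality between parse counts. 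Everything else is cancellation and a single use of the context-based characterization of the syntactic order.
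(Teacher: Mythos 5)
Your proof is correct and follows essentially the same route as the paper's: your cofactors $\tilde e = xp\alpha$ and $\tilde f = \beta sx$ are exactly the pseudowords $s^{-1}e$ and $fp^{-1}$ that the paper uses after noting that $s$ is a prefix of $e$ and $p$ a suffix of $f$, and both arguments then transfer the middle component of the parse through the context-inclusion characterization of the syntactic order and count parses via Proposition~\ref{p:how-to-count-parses}. The only cosmetic difference is that you make the $\mathcal R$- and $\mathcal L$-witnesses explicit instead of invoking prefix/suffix divisibility.
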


\begin{proof}
  Let $(s,w,p)$ be a parse of $u$ with respect to~$X$.  
  Then $s$ is a prefix of $e$ and $p$ is a suffix of $f$.
  By
  Proposition~\ref{p:how-to-count-parses},
  $s$ is left-cancelable, and $p$ is right-cancelable.
  Therefore, we may consider the pseudowords $s^{-1}e$ and $ep^{-1}$.
  Moreover, as $s(s^{-1}e)u(fp^{-1})p=euf=swp$,
  we have $w=(s^{-1}e)u(fp^{-1})$. As $w$ belongs to $\overline{X^\ast}$,
  and $\hat\eta_{X^\ast}(u)\leq\hat\eta_{X^\ast}(v)$,
  it follows that the pseudoword $(s^{-1}e)v(fp^{-1})$ also belongs to $\overline{X^\ast}$.
  Hence, $(s,(s^{-1}e)v(fp^{-1}),p)$ is a parse of $v$ with respect to $X$.
  By Proposition~\ref{p:how-to-count-parses},
  we conclude that $\delta_X(u)\leq\delta_X(v)$.
\end{proof}

\begin{Cor}
    Consider a factorial subset $F$ of $A^*$.
  Let $X$ be a rational
  subset of $A^*$ having finite $F$-degree.
  Suppose that $u$ and $v$ are
  $\mathcal H$-equivalent with elements of
  $\overline{F}$.
  If $\hat\eta_{X^\ast}(u)=\hat\eta_{X^\ast}(v)$ holds, then we have
   $\delta_X(u)=\delta_X(v)$.
\end{Cor}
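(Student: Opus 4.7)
The plan is to deduce this corollary from Proposition~\ref{p:link-between-syntactic-homomorphism-and-number-of-parses} by applying it in both directions. First I read the hypothesis as asserting that $u\mathrel{\mathcal H}v$ in $\F A$ and that their common $\mathcal H$-class meets $\overline F$. Since $\overline F$ is factorial and any two $\mathcal H$-equivalent pseudowords are two-sided factors of one another in $\F A$, it follows that both $u$ and $v$ lie in $\overline F$.

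Next, I pick idempotents $e$ and $f$ with $e\mathrel{\mathcal R}u\mathrel{\mathcal L}f$, available because the $\mathcal H$-class lies in a regular $\mathcal D$-class (most cleanly when the $\mathcal H$-class is itself a maximal subgroup meeting $\overline F$). Because $u\mathrel{\mathcal H}v$, the same $e$ and $f$ also satisfy $e\mathrel{\mathcal R}v\mathrel{\mathcal L}f$, so $eu=uf=u$ and $ev=vf=v$, giving $u=euf$ and $v=evf$. The two triples $(u,e,f)$ and $(v,e,f)$ then fit the structural hypotheses of Proposition~\ref{p:link-between-syntactic-homomorphism-and-number-of-parses} in a symmetric way.

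From $\hat\eta_{X^\ast}(u)=\hat\eta_{X^\ast}(v)$ I extract both inequalities $\hat\eta_{X^\ast}(u)\le\hat\eta_{X^\ast}(v)$ and $\hat\eta_{X^\ast}(v)\le\hat\eta_{X^\ast}(u)$. Applying Proposition~\ref{p:link-between-syntactic-homomorphism-and-number-of-parses} to the pair $(u,v)$ produces $\delta_X(u)\le\delta_X(v)$; applying it symmetrically to $(v,u)$, which is legitimate since $v\in\overline F$ and the same idempotents $e,f$ serve $v$ in place of $u$, produces $\delta_X(v)\le\delta_X(u)$. Combining the two inequalities yields the desired equality $\delta_X(u)=\delta_X(v)$.

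The main obstacle I anticipate is the reduction step: properly unpacking ``$\mathcal H$-equivalent with elements of $\overline F$'' and, in particular, securing the idempotents $e$ and $f$. Once that setup is in hand, the remainder is a routine symmetric invocation of the preceding proposition.
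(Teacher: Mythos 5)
Your treatment of the regular case coincides with the paper's: choose idempotents $e,f$ with $e\mathrel{\mathcal R}u\mathrel{\mathcal L}f$ (hence also $e\mathrel{\mathcal R}v\mathrel{\mathcal L}f$, since $u\mathrel{\mathcal H}v$), and apply Proposition~\ref{p:link-between-syntactic-homomorphism-and-number-of-parses} in both directions. That part is fine.

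The gap is in the step where you claim the idempotents are ``available because the $\mathcal H$-class lies in a regular $\mathcal D$-class.'' Nothing in the hypotheses guarantees regularity: the statement only assumes that $u$ and $v$ are $\mathcal H$-equivalent with elements of $\overline F$, and $\overline F$ contains plenty of non-regular pseudowords (indeed every finite word of $F$, and many infinite ones). If the common $\mathcal H$-class of $u$ and $v$ is not regular, there are no idempotents $e$ and $f$ with $e\mathrel{\mathcal R}u\mathrel{\mathcal L}f$, and Proposition~\ref{p:link-between-syntactic-homomorphism-and-number-of-parses} simply does not apply. The paper closes this case by a separate (and non-trivial) input: by \cite[Corollary 13.2]{Rhodes&Steinberg:2001}, a non-regular $\mathcal H$-class of $\F A$ is a singleton, so in that case $u=v$ and the conclusion is immediate. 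Your proof needs this dichotomy (or some substitute for it) to be complete; as written, the parenthetical ``most cleanly when the $\mathcal H$-class is itself a maximal subgroup'' signals the issue without resolving it.
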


\begin{proof}
  If $u$ and $v$ are not regular,
   then $u=v$ by~\cite[Corollary 13.2]{Rhodes&Steinberg:2001}.
   Assuming that $u$ and $v$ are $\mathcal H$-equivalent regular
   pseudowords,
   we can consider idempotents $e$ and $f$
   such that $u$ and $v$ are $\mathcal R$-equivalent to $e$
   and $\mathcal L$-equivalent to $f$.
   We may then apply
   Proposition~\ref{p:link-between-syntactic-homomorphism-and-number-of-parses}
   to establish the result.
\end{proof}

\section{Main results}
\label{sec:f-group-rational}

For a subset $Y$ of $\F A$, say that a pseudoword
$u$ is \emph{forbidden in $Y$} if $u$ is not a factor of an element of $Y$.
For the next proposition, recall that, if $Z$ is a rational and maximal bifix code, then $d(Z)$ is finite (cf.~Remark~\ref{rmk:rational-codes-are-thin}).

\begin{Prop}\label{p:equal-degree-imlies-forbidden}
  Let $Z$ be a rational maximal bifix code of $A^*$.
  Suppose that $F$ is a recurrent subset of $A^*$
  and that the intersection $X=Z\cap F$ is rational.
  The equality $d_F(X)=d(Z)$ holds if and only if the elements of $J(F)$
  are forbidden in $\overline{Z}$.
  Moreover, if $d_F(X)=d(Z)$
  then $\hat\eta_{Z^\ast}(J(F))\subseteq J(Z)$.
\end{Prop}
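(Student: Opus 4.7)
The plan is to reduce the equivalence to a comparison of the parsing functions $\delta_Z$ and $\delta_X$, combined with the internal-factor characterization of Theorem~\ref{t:characterization-of-finite-degree-codes}.

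First, I would show that $\delta_X(w)=\delta_Z(w)$ for every $w\in F$. Given a parse $(p,x,s)$ of $w$ with respect to $X$, the inclusion $X\subseteq Z$ yields $x\in Z^*$, while $p\in A^*Z$ would produce some $z\in Z$ as a suffix of $p$, hence a factor of $w\in F$; factoriality of $F$ then forces $z\in Z\cap F=X$, contradicting $p\notin A^*X$. The symmetric argument on $s$ and the trivial converse (using $A^*X\subseteq A^*Z$) show that $X$- and $Z$-parses of $w$ coincide. By density of $F$ in $\overline F$ and continuity (Proposition~\ref{p:the-parse-is-continuous}), we obtain $\delta_X=\delta_Z$ on $\overline F$.

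Second, I would extend the characterization from Theorem~\ref{t:characterization-of-finite-degree-codes} to pseudowords: an element $u\in\F A$ is an internal factor of some element of $\overline Z$ if and only if $\delta_Z(u)<d(Z)$. For one direction, Lemma~\ref{l:refine-factorization} lifts a factorization $z=pus$ with $z\in\overline Z$ and $p,s\neq 1$ to $z_n=p_nu_ns_n$ with $z_n\in Z$ and $p_n,s_n\in A^+$ (since $\{1\}$ is clopen); the word case and continuity then yield $\delta_Z(u)<d(Z)$. The converse is analogous, approximating $u$ by words with fewer than $d(Z)$ parses and extracting limits by compactness.

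Third, for $u\in J(F)$ I would show that being a factor and being an internal factor of some element of $\overline Z$ coincide. Since $J(F)$ is a regular $\mathcal J$-class of $\F A$ contained in $\F A\setminus A^*$, there exist idempotents $e,f\in J(F)$, both distinct from $1$, with $u=eu=uf$. Given $z=pus\in\overline Z$, I rewrite $z=(pe)u(fs)$; the equality $pe=1$ is ruled out because multiplying on the right by $e$ would force $e=1$, and analogously $fs\neq 1$, so $u$ is an internal factor of $z$.

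Combining these steps with Lemma~\ref{l:degree-in-JF} (which gives $\delta_X(u)=d_F(X)$ for $u\in J(F)$) yields the stated equivalence. For the moreover part, assuming $d_F(X)=d(Z)$, I would fix $u\in J(F)$ and a sequence of words $w_n\to u$; continuity of $\delta_Z$ gives $\delta_Z(w_n)=d(Z)$ eventually, so Proposition~\ref{p:d-parses-implies-you-are-in-JFX} applied with $F=A^*$ and $X=Z$ yields $\eta_{Z^*}(w_n)\in J(Z)$, and passage to the limit in the finite monoid $\Syn(Z^*)$ gives $\hat\eta_{Z^*}(u)\in J(Z)$. The main obstacle I anticipate is the third step: upgrading from factor to internal factor is delicate and relies crucially on the regular structure of $J(F)$, which in turn requires $J(F)\subseteq\F A\setminus A^*$ so that the absorbing idempotents $e,f$ are nontrivial.
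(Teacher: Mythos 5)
Your proof is correct, and it shares the paper's toolbox --- the identity $\delta_X=\delta_Z$ on $F$, the internal-factor characterization of Theorem~\ref{t:characterization-of-finite-degree-codes}, the continuity of the parse count (Proposition~\ref{p:the-parse-is-continuous}), Lemma~\ref{l:degree-in-JF}, and Proposition~\ref{p:d-parses-implies-you-are-in-JFX} for the ``moreover'' part --- but the decomposition of the forward direction is genuinely different. The paper never lifts the internal-factor characterization to pseudowords: to show that a factorization $z=pus$ with $z\in\overline{Z}$ and $u\in J(F)$ forces $d_F(X)<d(Z)$, it observes that every $w\in F$ extends to some $awb\in F$ with $a,b\in A$, that $awb$ is then a factor of $u$ (hence of $z$), and that openness of $\overline{A^*awbA^*}$ makes $awb$ a factor of an actual word of $Z$; thus every $w\in F$ is an internal factor of a word of $Z$ and $\delta_Z(w)<d(Z)$ for all $w\in F$ simultaneously, so the factor-versus-internal-factor distinction never arises at the profinite level. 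You instead prove two auxiliary statements the paper does not need: a pseudoword version of the internal-factor criterion for $\overline{Z}$ (via Lemma~\ref{l:refine-factorization} and the fact that $1$ is isolated), and the upgrade from factor to internal factor on $J(F)$ using idempotents $e,f$ with $u=euf$ --- your observation that $pe=1$ would force $e=pe\cdot e=1$ is the right way to rule out degenerate factorizations, and $e,f\neq 1$ holds because $J(F)$ consists of infinite pseudowords, $F$ being infinite. Your route is somewhat longer but yields a reusable profinite criterion; the paper's finite-word detour is more economical. Your converse direction and the argument that $\hat\eta_{Z^\ast}(J(F))\subseteq J(Z)$ coincide with the paper's (your compactness argument is exactly how the paper justifies that the approximating words $u_n$ are eventually not internal factors of words of $Z$).
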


\begin{proof}
  Suppose that $d_F(X)=d(Z)$.
  Let $u\in J(F)$. Suppose that $u$ is a factor of an element $z$ of $\overline{Z}$. Take $w\in F$. Then there are $a,b\in A$
  such that $awb$ is a factor of $u$, and thus of $z$.
  Therefore, since $\overline{A^* awb A^*}$ is open in $\F A$,
  the word $awb$ is a factor of an element of $Z$.
  This implies that $\delta_Z(w)<d(Z)$,
  by Theorem~\ref{t:characterization-of-finite-degree-codes}.
  Since one clearly has $\delta_X(w)=\delta_Z(w)$,
  it follows that $d_F(X)<d(Z)$, contradicting
  that $d_F(X)=d(Z)$.
  Therefore, if $d_F(X)=d(Z)$, then
  every element of $J(F)$  is forbidden in $\overline{Z}$.

  Conversely, suppose that the elements of $J(F)$ are forbidden in $\overline{Z}$. Let $u\in J(F)$.
  Consider a sequence $(u_n)_n$ of elements of $F$ converging
  to $u$. For all sufficiently large $n$,
  we know that $u_n$
  is not an internal factor of some element of $Z$.
  Hence, by Theorem~\ref{t:characterization-of-finite-degree-codes},
  we have $\delta_Z(u_n)=d(Z)$, for all sufficiently large~$n$.
  Notice that, as $u_n\in F$, we may write $\delta_Z(u_n)=\delta_X(u_n)$.
  Applying Proposition~\ref{p:the-parse-is-continuous},
  we conclude that $\delta_X(u)=d(Z)$.
  On the other hand, by Lemma~\ref{l:degree-in-JF},
  we have $\delta_X(u)=d_F(X)$. This establishes the converse implication
  of the equivalence in the statement of the proposition.

  Finally, since $\delta_Z(u_n)=d(Z)$,
  we have $\hat\eta_{Z^\ast}(u_n)\in J(Z)$
  by Proposition~\ref{p:d-parses-implies-you-are-in-JFX},
  thus $\hat\eta_{Z^\ast}(u)\in J(Z)$.  
\end{proof}

The notion of forbidden pseudoword in a set was introduced
having in mind the following key proposition.

\begin{Prop}\label{p:fundamental-syntactic-inequalities}
  Let $Z$ and $F$ be subsets of $A^*$,
  with $F$ factorial, and let $X=Z\cap F$.
  Suppose, moreover, that $Z^\ast$ and $X^\ast$
  are rational.
  Let $e,f$ be idempotents of $\F A$, and let $u\in\overline{F}$ and $v\in\F A$ be such that $u=euf$
  and $v=evf$. The following implications hold:
  \begin{enumerate}
  \item $\hat\eta_{X^\ast}(u)\leq \hat\eta_{X^\ast}(v)\Rightarrow \hat\eta_{Z^\ast}(u)\leq \hat\eta_{Z^\ast}(v)$, if $e$ and $f$ are forbidden in $\overline{Z}$\label{item:fundamental-syntactic-inequalities-1};
  \item $\hat\eta_{Z^\ast}(v)\leq \hat\eta_{Z^\ast}(u)
    \Rightarrow \hat\eta_{X^\ast}(v)\leq \hat\eta_{X^\ast}(u)$, if $e$ and $f$ are forbidden in $\overline{X}$\label{item:fundamental-syntactic-inequalities-2}.
  \end{enumerate}
\end{Prop}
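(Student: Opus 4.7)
The plan is to reduce both implications, via Lemma~\ref{l:syntactic-inequality-profinite-version}, to statements about inclusions of contexts in $\overline{Z^*}$ and $\overline{X^*}$, and then to prove these by a parallel factorization argument built from the idempotence of $e$ and $f$ together with the hypothesis that these idempotents are forbidden in the appropriate closure.

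A preliminary sublemma I would establish first: if $F$ is factorial and $Z^*$ is rational, then $\overline{Z^*}\cap\overline{F}\subseteq\overline{X^*}$. Indeed, given $w\in\overline{Z^*}\cap\overline{F}$, approximate $w$ by a sequence $w_n\in F$; since $\overline{Z^*}$ is open by Theorem~\ref{t:closure-of-rational-lang} and $\overline{Z^*}\cap A^*=Z^*$, eventually $w_n\in Z^*\cap F$, and factoriality of $F$ forces any $Z$-factorization of $w_n$ to consist of letters in $F$, whence $w_n\in(Z\cap F)^*=X^*$. The analogous statement with $X^*$ in place of $Z^*$ is of course automatic.

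For (1), I take $(x,y)\in C_{\overline{Z^*}}(u)$ and want $xvy\in\overline{Z^*}$. Using $u=e^3uf^3$, I approximate $xuy$ by $w_n\in Z^*$ and then, by iterated use of Lemma~\ref{l:refine-factorization}, refine each $w_n$ as $x_n e_{n,1} e_{n,2} e_{n,3} u_n f_{n,1} f_{n,2} f_{n,3} y_n$ with all components converging appropriately. Fix a decomposition $w_n=z_{n,1}\cdots z_{n,k_n}$ with $z_{n,j}\in Z$. Using that $e$ is forbidden in $\overline{Z}$ together with compactness of $\overline{Z}$ inside $\F A$, I argue that for large $n$ the factor $e_{n,2}$ cannot be contained in a single $z_{n,j}$: otherwise, extracting a convergent subsequence of those $z_{n,j}$ would give an element of $\overline{Z}$ having $e$ as a factor, contradicting the hypothesis. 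Therefore a $Z$-boundary falls strictly inside $e_{n,2}$, and similarly inside $f_{n,2}$. Since $e_{n,2}$ precedes $f_{n,2}$ positionally, this yields a decomposition $w_n=A_n M_n B_n$ with $A_n,M_n,B_n\in Z^*$. Passing to convergent subsequences and using the identities $eu=uf=u$ (which follow from $u=euf$ and $e^2=e$, $f^2=f$), I obtain limits $A=xe\alpha$, $M=\beta u\alpha'$, $B=\beta'fy$ in $\overline{Z^*}$, with $\alpha\beta=e$ and $\alpha'\beta'=f$. The crucial observation is that $M=\beta u\alpha'$ is a factor of $u=\alpha(\beta u\alpha')\beta'\in\overline{F}$, so $M\in\overline{F}$; combined with $M\in\overline{Z^*}$, the sublemma gives $M\in\overline{X^*}$. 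Thus $(\beta,\alpha')\in C_{\overline{X^*}}(u)$, and the hypothesis of~(1) yields $\beta v\alpha'\in\overline{X^*}\subseteq\overline{Z^*}$. Reassembling, $xvy=xe\alpha\cdot\beta v\alpha'\cdot\beta'fy=A\cdot(\beta v\alpha')\cdot B\in\overline{Z^*}$, as desired.

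The proof of (2) follows the same template with the roles of $Z$ and $X$ exchanged: starting from $xvy\in\overline{X^*}$, I apply the factorization argument inside $X^*$ (which is where the hypothesis that $e,f$ are forbidden in $\overline{X}$ is used, via the same compactness argument inside $\overline{X}$) to obtain $xvy=A'\cdot(\beta v\alpha')\cdot B'$ with all three factors in $\overline{X^*}$. Then $\beta v\alpha'\in\overline{Z^*}$, and by the hypothesis of~(2) we get $\beta u\alpha'\in\overline{Z^*}$; since $\beta u\alpha'$ is also a factor of $u\in\overline{F}$, the sublemma yields $\beta u\alpha'\in\overline{X^*}$, and reassembly gives $xuy\in\overline{X^*}$. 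The main technical obstacle I anticipate is the careful bookkeeping in the factorization step, particularly ensuring that the two boundaries produced by the compactness argument fall in the correct order (first strictly inside $e_{n,2}$, then strictly inside $f_{n,2}$) so that the decomposition $w_n=A_n M_n B_n$ is well defined; once that is in place, the rest reduces to the algebraic identities $e^2=e$, $f^2=f$, $u=euf$ and a routine passage to the limit along subsequences.
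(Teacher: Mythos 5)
Your proof is correct and follows essentially the same strategy as the paper's: approximate the context word by elements of $Z^*$ (resp.\ $X^*$), use the hypothesis that the idempotents are forbidden, together with compactness, to force code boundaries inside their word approximants, pass to the limit, and transfer the middle factor between the two syntactic monoids via Lemma~\ref{l:syntactic-inequality-profinite-version}. Your explicit sublemma $\overline{Z^*}\cap\overline{F}\subseteq\overline{X^*}$ and the $e^3$/$f^3$ buffering are harmless repackagings of steps the paper carries out inline with single copies of $e$ and $f$.
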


\begin{proof}
  Suppose that $\hat\eta_{X^\ast}(u)\leq \hat\eta_{X^\ast}(v)$.
  Consider pseudowords $\alpha,\beta\in\F A$ such that \mbox{$\alpha u\beta\in \overline{Z^\ast}$}.
  Because $u=euf$,
  and in view of Lemma~\ref{l:refine-factorization}, we may consider sequences $(\alpha_n)$, $(\beta_n)$, $(e_n)$, $(f_n)$, $(u_n)$ and $(v_n)$, of words in $A^*$, with $e_nu_nf_n\in F$,
  respectively converging to $\alpha$, $\beta$, $e$, $f$, $u$ and $v$.  
  It follows that $\lim\alpha_ne_nu_nf_n\beta_n=\alpha u\beta\in \overline{Z^\ast}$.
  Because $Z^\ast$ is rational,
  we may as well assume that $\alpha_ne_nu_nf_n\beta_n\in Z^\ast$ for all $n$.
  Since, by hypothesis, the pseudowords $e$ and $f$ are not factors
  of elements of $\overline{Z}$,
  there is $n_0$ such that for all $n>n_0$
  there are factorizations $e_n=e_{n,1}e_{n,2}$
  and $f_n=f_{n,1}f_{n,2}$
  for which the words $\alpha_ne_{n,1}$, $e_{n,2}u_nf_{n,1}$,
  and $f_{n,2}\beta_n$ all belong to $Z^\ast$.
  Hence, as $e_{n,2}u_nf_{n,1}\in F$, we
  have $e_{n,2}u_nf_{n,1}\in X^*$, when $n>n_0$.
  Let $(e_1,e_2,f_1,f_2)$ be an accumulation point of
  the tuple $(e_{n,1},e_{n,2},f_{n,1},f_{n,2})$.
  We then have $e_2uf_1\in\overline{X^\ast}$ and
  $\alpha e_1,f_2\beta\in \overline{Z^\ast}$.
  Applying the hypothesis $\hat\eta_{X^\ast}(u)\leq\hat\eta_{X^\ast}(v)$,
  we obtain $e_2vf_1\in\overline{X^\ast}$.
  Therefore, we have
  \begin{equation*}
    \alpha v\beta= \alpha e_1\cdot e_2 v f_1\cdot f_2\beta\in
    \overline{Z^\ast}\cdot \overline{X^\ast}\cdot \overline{Z^\ast}\subseteq \overline{Z^\ast}, 
  \end{equation*}
  thereby establishing that $\hat\eta_{Z^\ast}(u)\leq \hat\eta_{Z^\ast}(v)$.

  Conversely, suppose that $\hat\eta_{Z^\ast}(v)\leq \hat\eta_{Z^\ast}(u)$.
  Let $\alpha,\beta\in\F A$ be such that $\alpha v\beta\in \overline{X^\ast}$.
  We may consider sequences $(\alpha_n)$, $(\beta_n)$, $(e_n)$, $(f_n)$, $(u_n)$ and $(v_n)$, of words in $A^*$, with $e_nu_nf_n\in F$,
  respectively converging to $\alpha$, $\beta$, $e$,
  $f$, $u$ and~$v$. 
  We have
  $\lim\alpha_ne_nv_nf_n\beta_n=\alpha v\beta\in \overline{X^\ast}$.
  Since $X^\ast$ is rational,
  we may as well assume that $\alpha_ne_nv_nf_n\beta_n\in X^\ast$ for all $n$.
  As $e,f$ are forbidden in $\overline{X}$,
  we conclude that, for some $n_0$, and every $n>n_0$,
  there are factorizations $e_n=e_{n,1}e_{n,2}$
  and $f_n=f_{n,1}f_{n,2}$
  such that the words $\alpha_ne_{n,1}$, $e_{n,2}v_nf_{n,1}$,
  and $f_{n,2}\beta_n$ all belong to $X^\ast$.
   Let $(e_1,e_2,f_1,f_2)$ be an accumulation point of
  the tuple $(e_{n,1},e_{n,2},f_{n,1},f_{n,2})$.
  We then have $e_2vf_1\in\overline{X^\ast}$ and
  $\alpha e_1,f_2\beta\in \overline{X^\ast}$.
  Applying the hypothesis $\hat\eta_{Z^\ast}(v)\leq \hat\eta_{Z^\ast}(u)$,
  we obtain $e_2uf_1\in\overline{Z^\ast}$.
  But, since $e_2uf_1\in\overline{F}$
  and $Z^\ast$ is rational,
  we actually have $e_2uf_1\in\overline{X^\ast}$. 
  We conclude that
  $\alpha u\beta= \alpha e_1\cdot e_2 u
  f_1\cdot f_2\beta
  \in\overline{X^\ast}$.
\end{proof}

\begin{Cor}\label{c:isomorphism-between-images}
  Under the assumptions of
  Proposition~\ref{p:fundamental-syntactic-inequalities},
  if $H$ is a maximal subgroup of $J(F)$
  whose elements are forbidden in $\overline{X}$,
  then the group $\hat\eta_{X^\ast}(H)$
  is a homomorphic image of the group $\hat\eta_{Z^\ast}(H)$.
  If, moreover, the elements of $H$ are forbidden in $\overline{Z}$,
  then $\hat\eta_{X^\ast}(H)$ and $\hat\eta_{Z^\ast}(H)$
  are isomorphic. 
\end{Cor}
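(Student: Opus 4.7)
The plan is to exhibit the required homomorphism as the rule $\hat\eta_{Z^\ast}(h)\mapsto\hat\eta_{X^\ast}(h)$ for $h\in H$, using Proposition~\ref{p:fundamental-syntactic-inequalities} to establish well-definedness (for the first claim) and injectivity (for the second). The key observation is that within a maximal subgroup, the two factorizations of the proposition both collapse to conjugation by the single idempotent $e$ of $H$, so applying the proposition in both directions suffices.

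First I would let $e$ denote the idempotent of $H$. Every $h\in H$ then satisfies $h=ehe$, and since $H\subseteq J(F)\subseteq\overline{F}$, any two elements $u,v\in H$ meet the hypotheses of Proposition~\ref{p:fundamental-syntactic-inequalities} with $f=e$. Both $\hat\eta_{Z^\ast}(H)$ and $\hat\eta_{X^\ast}(H)$ are groups, as images of the group $H$ under monoid homomorphisms, with identities $\hat\eta_{Z^\ast}(e)$ and $\hat\eta_{X^\ast}(e)$ respectively. Assuming the elements of $H$ are forbidden in $\overline{X}$, I apply part~(2) of Proposition~\ref{p:fundamental-syntactic-inequalities} to the pairs $(u,v)$ and $(v,u)$ separately; the two monotone implications combine into $\hat\eta_{Z^\ast}(u)=\hat\eta_{Z^\ast}(v)\Rightarrow\hat\eta_{X^\ast}(u)=\hat\eta_{X^\ast}(v)$ for all $u,v\in H$. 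This is exactly the well-definedness needed for the rule above to yield a map $\varphi\colon\hat\eta_{Z^\ast}(H)\to\hat\eta_{X^\ast}(H)$; surjectivity and the homomorphism property are immediate from the construction and from the multiplicativity of $\hat\eta_{Z^\ast}$ and $\hat\eta_{X^\ast}$.

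For the second assertion, suppose additionally that the elements of $H$ are forbidden in $\overline{Z}$. Then the same strategy, now invoking part~(1) of Proposition~\ref{p:fundamental-syntactic-inequalities} in both directions, delivers the converse implication $\hat\eta_{X^\ast}(u)=\hat\eta_{X^\ast}(v)\Rightarrow\hat\eta_{Z^\ast}(u)=\hat\eta_{Z^\ast}(v)$, which says that $\varphi$ is injective and hence an isomorphism of finite groups. The substantive work has already been absorbed into Proposition~\ref{p:fundamental-syntactic-inequalities}; the only point requiring (brief) attention is that $u$ and $v$ may freely swap roles in the proposition, and this is automatic because every element of $H$ is an element of $\overline{F}$ sandwiched between two copies of the single idempotent $e=f$.
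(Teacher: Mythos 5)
Your proposal is correct and follows essentially the same route as the paper: both define the map $\hat\eta_{Z^\ast}(u)\mapsto\hat\eta_{X^\ast}(u)$ for $u\in H$ and use part~(2) of Proposition~\ref{p:fundamental-syntactic-inequalities} (with $e=f$ the idempotent of $H$) for well-definedness and part~(1) for injectivity. You merely spell out the symmetry-in-$u,v$ point that the paper leaves implicit, which is a harmless elaboration.
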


\begin{proof}
  By Proposition~\ref{p:fundamental-syntactic-inequalities}\eqref{item:fundamental-syntactic-inequalities-2}, if the elements of $H$ are forbidden
  in $\overline{X}$, then the correspondence
  \begin{equation*}
  \hat\eta_{Z^\ast}(u)\mapsto \hat\eta_{X^\ast}(u)\quad (u\in H),
  \end{equation*}
  is a well-defined homomorphism from $\hat\eta_{Z^\ast}(H)$
  onto $\hat\eta_{X^\ast}(H)$.
  By Proposition~\ref{p:fundamental-syntactic-inequalities}\eqref{item:fundamental-syntactic-inequalities-1},
  this homomorphism is an isomorphism if the elements of $H$ are forbidden in $\overline{Z}$.
\end{proof}

Under additional assumptions, we next obtain a closer relationship
between the homomorphisms $\hat\eta_{X^\ast}$
and $\hat\eta_{Z^\ast}$ appearing in Proposition~\ref{p:fundamental-syntactic-inequalities}.

\begin{Prop}\label{p:well-defined-homor-images}
  Let $F$ be a factorial subset of $A^*$.
  Suppose that $Z$
  is a rational prefix code of $A^*$ and that
  $X=Z\cap F$ is a rational $F$-maximal prefix code.
  Consider idempotents $e,f$ that are forbidden
  in~$\overline{Z}$.
  Let $u\in\overline{F}$ and $v\in\F A$ be such that $u=euf$
  and $v=evf$.
  If we have $\hat\eta_{X^\ast}(u)\leq\hat\eta_{X^\ast}(v)$, then
  the equality $\hat\eta_{Z^\ast}(u)=\hat\eta_{Z^\ast}(v)$ holds.
\end{Prop}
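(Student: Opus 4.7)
The plan is to establish both inequalities $\hat\eta_{Z^\ast}(u)\leq\hat\eta_{Z^\ast}(v)$ and $\hat\eta_{Z^\ast}(v)\leq\hat\eta_{Z^\ast}(u)$, which together yield the desired equality. The former is an immediate application of Proposition~\ref{p:fundamental-syntactic-inequalities}\eqref{item:fundamental-syntactic-inequalities-1}, since the hypothesis $\hat\eta_{X^\ast}(u)\leq\hat\eta_{X^\ast}(v)$ holds and the idempotents $e,f$ are forbidden in $\overline{Z}$. The task thus reduces to proving the reverse inequality: that $\alpha v\beta\in\overline{Z^\ast}$ implies $\alpha u\beta\in\overline{Z^\ast}$ for all $\alpha,\beta\in\F A$.

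For this reverse implication I would mimic the cutting technique of Proposition~\ref{p:fundamental-syntactic-inequalities}. Starting from $\alpha v\beta\in\overline{Z^\ast}$, Theorem~\ref{t:closure-of-rational-lang} and Lemma~\ref{l:refine-factorization} allow me to write $\alpha v\beta=\lim\alpha_ne_nv_nf_n\beta_n$ with $\alpha_ne_nv_nf_n\beta_n\in Z^\ast$ and $\alpha_n\to\alpha$, $e_n\to e$, $v_n\to v$, $f_n\to f$, $\beta_n\to\beta$. Since $e,f$ are forbidden in $\overline{Z}$, for large $n$ the $Z$-factorization of $\alpha_ne_nv_nf_n\beta_n$ must cross both the $e_n$- and $f_n$-portions; this yields factorizations $e_n=e_{n,1}e_{n,2}$, $f_n=f_{n,1}f_{n,2}$ with $\alpha_ne_{n,1}$, $e_{n,2}v_nf_{n,1}$, and $f_{n,2}\beta_n$ all in $Z^\ast$. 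Passing to accumulation points one obtains $e=e_1e_2$, $f=f_1f_2$ with $\alpha e_1$, $e_2vf_1$, $f_2\beta\in\overline{Z^\ast}$, reducing the problem to showing that $e_2uf_1\in\overline{Z^\ast}$.

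To obtain $e_2uf_1\in\overline{Z^\ast}$, the $F$-maximal prefix property of $X$ enters. Since $u\in\overline{F}$ and $\overline{F}$ is factorial, $e_2uf_1\in\overline{F}$; by Proposition~\ref{p:right-F-complete} together with a standard compactness argument, there exists $g\in\F A$ such that $e_2uf_1g\in\overline{X^\ast}$. Applying the hypothesis $\hat\eta_{X^\ast}(u)\leq\hat\eta_{X^\ast}(v)$ (via Lemma~\ref{l:syntactic-inequality-profinite-version}) with context $(e_2,f_1g)$ produces $e_2vf_1g\in\overline{X^\ast}\subseteq\overline{Z^\ast}$. Combined with $e_2vf_1\in\overline{Z^\ast}$, the right-unitarity of $\overline{Z^\ast}$ (Proposition~\ref{p:propositionUnitary}(2), since $Z$ is a prefix code) yields $g\in\overline{Z^\ast}$.

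The principal obstacle will be the final step: passing from $e_2uf_1g\in\overline{X^\ast}\subseteq\overline{Z^\ast}$ and $g\in\overline{Z^\ast}$ to the desired conclusion $e_2uf_1\in\overline{Z^\ast}$. Right-unitarity of $\overline{Z^\ast}$ alone does not suffice since $Z^\ast$ need not be left-unitary. I expect the gap to be closed by refining the choice of $g$ so that it also lies in $\overline{X^\ast}$---for instance, by exploiting the idempotence of $f$ (so that factors of $f$ can be absorbed) together with a renewed application of the right-completeness argument---after which the right-unitarity of $\overline{X^\ast}$ can be invoked to place $e_2uf_1$ directly in $\overline{X^\ast}$, and hence in $\overline{Z^\ast}$.
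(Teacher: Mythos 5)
Your reduction to showing $e_2uf_1\in\overline{Z^\ast}$ follows the paper exactly: the first inequality via Proposition~\ref{p:fundamental-syntactic-inequalities}\eqref{item:fundamental-syntactic-inequalities-1}, then the cutting of $e$ and $f$ to get $\alpha e_1$, $e_2vf_1$, $f_2\beta\in\overline{Z^\ast}$. Your derivation of $g\in\overline{Z^\ast}$ is also correct. But the gap you flag at the end is real, and the repair you sketch does not work: to pass from $e_2uf_1g\in\overline{X^\ast}$ and $g\in\overline{X^\ast}$ to $e_2uf_1\in\overline{X^\ast}$ you would need $\overline{X^\ast}$ to be \emph{left} unitary (cancellation of a suffix lying in the submonoid), which is the suffix-code property; here $X$ and $Z$ are only assumed to be prefix codes, so only right unitarity is available, and it cancels on the wrong side. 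No refinement of $g$ repairs this as long as $g$ is appended \emph{after} $f_1$.

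The paper's proof instead arranges for the completion point to fall \emph{inside} $f_1$. It first normalizes so that $f_1=ff_1$ (possible since $f$ is idempotent and $v=vf$, replacing $f_1$ by $ff_1$ and adjusting $f_2$). Approximating $e_2uf_1\in\overline F$ by words $w_n=e_{2,n}u_nf_{1,n}$ of $F$ and extending each $w_n$ to a word of $X^\ast$ by right $F$-completeness (Proposition~\ref{p:right-F-complete}), one observes that since $f_1=ff_1$ has the forbidden idempotent $f$ as a factor, $f_{1,n}$ cannot lie inside a single codeword of $X$ for large $n$; hence some cut of the $X^\ast$-factorization lands inside $f_{1,n}$, giving $f_{1,n}=t_ns_n$ with $e_{2,n}u_nt_n\in X^\ast$ and $s_n$ a \emph{proper prefix of an element of $X$}. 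Passing to accumulation points, $e_2ut\in\overline{X^\ast}$; the hypothesis then gives $e_2vt\in\overline{X^\ast}\subseteq\overline{Z^\ast}$, and right unitarity of $\overline{Z^\ast}$ applied to $e_2vt$ and $e_2vt\cdot s=e_2vf_1\in\overline{Z^\ast}$ yields $s\in\overline{Z^\ast}$. Now the residue is controlled: $s_n\in Z^\ast$ for large $n$ while also being a proper prefix of an element of $X\subseteq Z$, which forces $s_n=1$ because $Z$ is a prefix code; hence $s=1$, $t=f_1$, and $e_2uf_1=e_2ut\in\overline{X^\ast}\subseteq\overline{Z^\ast}$. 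The extra leverage comes from knowing that the leftover piece is a limit of proper prefixes of codewords, a property your $g$ does not have.
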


\begin{proof}
  Suppose that the inequality
  $\hat\eta_{X^\ast}(u)\leq \hat\eta_{X^\ast}(v)$ holds.
  By Proposition~\ref{p:fundamental-syntactic-inequalities}\eqref{item:fundamental-syntactic-inequalities-1},
  we have
  $\hat\eta_{Z^\ast}(u)\leq \hat\eta_{Z^\ast}(v)$.
  We proceed to prove that $\hat\eta_{Z^\ast}(v)\leq \hat\eta_{Z^\ast}(u)$.
  Let $\alpha,\beta\in\F A$ be such that
  $\alpha v\beta\in\overline{Z^\ast}$.
  Since $e$ and $f$ are forbidden in $\overline{Z}$ and by Lemma~\ref{l:refine-factorization}, there are factorizations $e=e_1e_2$ and $f=f_1f_2$
  such that
  the pseudowords $\alpha e_1$,
  $e_2vf_1$ and $f_2\beta$ belong to $\overline{Z^\ast}$.
  Because $f$ is idempotent and $v=vf$,
  we have $f=(ff_1)f_2$ and $e_2vf_1=e_2v(ff_1)$,
  and so we are reduced to the case where $f_1=ff_1$, which we suppose
  from hereon to hold.
  Notice that the pseudoword $e_2uf_1$ is a factor of $u$,
  and thus it belongs to $\overline{F}$.
  Let $(w_n)_n$ be a sequence of elements
  of $F$ converging to $e_2uf_1$. Thanks to Lemma~\ref{l:refine-factorization},
  there is a factorization $w_n=e_{2,n}u_nf_{1,n}$
  such that $\lim e_{2,n}=e$, $\lim u_n=u$ and $\lim f_{1,n}=f_1$.
  By Proposition~\ref{p:right-F-complete}, every element of $F$ is a prefix of an element of $X^\ast$.
  On the other hand, $f_1=ff_1$ is not a factor of an element of
  $\overline{X}$, as $f$ itself is not. Hence,
  taking subsequences, we may suppose that
  there is a factorization $f_{1,n}=t_ns_n$
  such that $e_{2,n}u_nt_n$ is an element of $X^\ast$
  and $s_n$ is a proper prefix of an element of $X$.
  Let $t$ and $s$ be respectively an accumulation point of the
  sequence $(t_n)_n$ and of the sequence $(s_n)_n$.
  Note that
  \begin{equation}\label{eq:well-defined-homor-images-1}
    e_2ut\in\overline{X^\ast}.
  \end{equation}
  Applying the hypothesis 
  $\hat\eta_{X^\ast}(u)\leq\hat\eta_{X^\ast}(v)$,
  we get
  \begin{equation}\label{eq:well-defined-homor-images-2}
   e_2vt\in\overline{X^\ast}.
  \end{equation}
  On the other hand, by the definition of $e_2$ and $f_1$, we have
  \begin{equation}\label{eq:well-defined-homor-images-3}
      e_2vt\cdot s=e_2vf_1\in\overline{Z^\ast}.
  \end{equation}
  Since $\overline{Z^\ast}$ is right unitary
  (cf.~Proposition~\ref{p:propositionUnitary})
  it follows from
  \eqref{eq:well-defined-homor-images-2}
  and
  \eqref{eq:well-defined-homor-images-3}
  that $s\in \overline{Z^\ast}$.
  This means that $s_n\in Z^\ast$
  for sufficiently large $n$.
  Because $s_n$ is a proper prefix of an element of $X\subseteq Z$
  and $Z$ is a prefix code, we deduce that $s_n=1$
  for sufficiently large $n$, thus $s=1$ and $t=f_1$.
  Therefore, in view of \eqref{eq:well-defined-homor-images-1},
  we conclude that $\alpha u\beta =\alpha e_1\cdot e_2uf_1\cdot f_2\beta
  \in\overline{Z^\ast}$,
  thereby establishing that $\hat\eta_{Z^\ast}(v)\leq \hat\eta_{Z^\ast}(u)$.
\end{proof}

We now apply the preceding tools to deduce relationships between the maximal subgroups of
$J(F)$, $J(Z)$ and $J_F(Z\cap F)$, for suitable $Z$ and $F$.

\begin{Thm}\label{t:injective-homomorphism-maximal-subgroups}
  Let $F$ be a factorial subset of $A^*$.
  Suppose that $Z$
  is a rational prefix code of $A^*$ and that
  $X=Z\cap F$ is a rational $F$-maximal prefix code.
  Let $H$ be a maximal subgroup of $\F A$
  contained in $\overline {F}$.
  Suppose also that the elements of $H$ are forbidden in $\overline{Z}$.
  Consider the maximal subgroup $H_X$
  of $\Syn(X^\ast)$ containing $\hat\eta_{X^\ast}(H)$
  and the maximal subgroup $H_Z$ of $\Syn(Z^\ast)$
  containing $\hat\eta_{Z^\ast}(H)$.
  There is an injective homomorphism $\alpha\colon H_X\to H_Z$
  such that the diagram
    \begin{equation}\label{eq:injective-homomorphism-maximal-subgroups-1}
    \xymatrix{
      H\ar[r]^{\hat\eta_{Z^\ast}}\ar[d]_{\hat\eta_{X^\ast}}&H_Z\\
      H_X\ar[ur]_\alpha&
    }
  \end{equation}
  commutes.
\end{Thm}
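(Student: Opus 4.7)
The plan is to define $\alpha$ by the only rule compatible with the diagram, namely $\alpha(\hat\eta_{X^*}(h)):=\hat\eta_{Z^*}(h)$ for $h\in H$, to show that this prescription gives a group isomorphism of $\hat\eta_{X^*}(H)$ onto $\hat\eta_{Z^*}(H)\subseteq H_Z$, and then to argue that $\hat\eta_{X^*}(H)$ actually fills out the whole maximal subgroup $H_X$, so that $\alpha$ lives on all of $H_X$ as required.

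Let $e$ be the idempotent of $H$; then every $h\in H$ satisfies $h=ehe$ and $h\in\overline{F}$. By hypothesis $e$ is forbidden in $\overline{Z}$, and since $\overline{X}\subseteq\overline{Z}$, also in $\overline{X}$. For the well-definedness of $\alpha$ on $\hat\eta_{X^*}(H)$ (and hence its being a group homomorphism), I would apply Proposition~\ref{p:well-defined-homor-images} with the common idempotent $f:=e$: for $u,v\in H$ with $\hat\eta_{X^*}(u)\leq\hat\eta_{X^*}(v)$ (and thus, in particular, whenever the images in $\Syn(X^*)$ coincide), the proposition yields $\hat\eta_{Z^*}(u)=\hat\eta_{Z^*}(v)$. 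So $\alpha$ is a well-defined surjective group homomorphism from $\hat\eta_{X^*}(H)$ onto $\hat\eta_{Z^*}(H)$.

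Injectivity on $\hat\eta_{X^*}(H)$ would then follow from Proposition~\ref{p:fundamental-syntactic-inequalities}\eqref{item:fundamental-syntactic-inequalities-2}, applied again with both idempotents equal to $e$ (which is forbidden in $\overline{X}$): given $u,v\in H$ with $\hat\eta_{Z^*}(u)=\hat\eta_{Z^*}(v)$, the two inequalities thus available in $\Syn(Z^*)$ are transferred by the proposition into the two corresponding inequalities, and thus into an equality, in $\Syn(X^*)$.

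The main obstacle is the last step, namely to upgrade this group isomorphism $\hat\eta_{X^*}(H)\cong\hat\eta_{Z^*}(H)$ into an injective homomorphism $H_X\to H_Z$, which amounts to establishing the equality $\hat\eta_{X^*}(H)=H_X$. Given $m\in H_X$, surjectivity of the syntactic morphism supplies $w\in\F A$ with $\hat\eta_{X^*}(w)=m$, and replacing $w$ by $ewe$ one may assume $w\in e\F A e$; the delicate point is then to adjust $w$ so that $w\in H$, equivalently so that $w^\omega=e$ rather than some other idempotent of $\F A$ identified by $\hat\eta_{X^*}$ with $\hat\eta_{X^*}(e)$. I would attempt this by combining compactness of $\F A$, the right $F$-completeness of $X$ furnished by Proposition~\ref{p:right-F-complete} (which permits completing elements of $\overline{F}$ into $\overline{X^*}$), and standard idempotent-lifting arguments for continuous homomorphisms of profinite monoids. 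Once this is in place, commutativity of the diagram is immediate from the definition of $\alpha$.
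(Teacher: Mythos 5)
Your first two steps (well-definedness and injectivity of the assignment $\hat\eta_{X^*}(h)\mapsto\hat\eta_{Z^*}(h)$ on the subgroup $\hat\eta_{X^*}(H)$, via Proposition~\ref{p:well-defined-homor-images} and Proposition~\ref{p:fundamental-syntactic-inequalities}\eqref{item:fundamental-syntactic-inequalities-2}) are sound and match the paper's ideas. The genuine gap is the final step: the equality $\hat\eta_{X^*}(H)=H_X$ that you are trying to establish is \emph{not} a consequence of the hypotheses. A continuous onto homomorphism of profinite monoids maps a maximal subgroup \emph{into}, not necessarily \emph{onto}, the maximal subgroup containing its image; this is exactly why the theorem says ``the maximal subgroup $H_X$ \emph{containing} $\hat\eta_{X^\ast}(H)$,'' and why the condition ``$X$ is weakly $F$-charged'' (which is essentially the equality you want) appears as a genuinely additional hypothesis in Corollary~\ref{c:injective-homomorphism-maximal-subgroups} and Theorem~\ref{t:group-code-isomor-maximal-subgroups}. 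If your step went through, that condition would be vacuous and the corollary would collapse. The toolkit you invoke (compactness, right $F$-completeness, idempotent lifting) can at best produce \emph{some} closed subgroup of $\F A$ mapping onto $H_X$, with no control forcing it to be the given $H$, nor even to sit inside $\overline{F}$ — and membership of the relevant elements in $\overline{F}$ is needed to apply Propositions~\ref{p:well-defined-homor-images} and~\ref{p:fundamental-syntactic-inequalities}.

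The paper sidesteps this entirely: it never claims $\hat\eta_{X^*}(H)=H_X$. Instead it works with the closed submonoid $M=\hat\eta_{X^\ast}^{-1}(H_X)\cap e\cdot\F A\cdot e$, where $e$ is the idempotent of $H$; this $M$ \emph{does} surject onto $H_X$. For $x\in M$ one has $\hat\eta_{X^\ast}(x^\omega)=\hat\eta_{X^\ast}(e)$, and Proposition~\ref{p:well-defined-homor-images} applied with $u:=e\in\overline{F}$ and $v:=x^\omega$ (note the proposition only requires $u$, not $v$, to lie in $\overline{F}$, which is essential here since $x^\omega\notin\overline{F}$ in general) yields $\hat\eta_{Z^\ast}(x^\omega)=\hat\eta_{Z^\ast}(e)$, hence $\hat\eta_{Z^\ast}(x)\in H_Z$; the same proposition applied to $u:=e$ and $v:=yx^{\omega-1}$ shows that $\hat\eta_{Z^\ast}(x)$ depends only on $\hat\eta_{X^\ast}(x)$. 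This defines $\alpha$ on all of $H_X$, and injectivity then follows as in your argument, using that the syntactic order is trivial on subgroups of a finite ordered monoid. You should replace your last paragraph by this construction.
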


\begin{proof}
  Denote by $e$ the idempotent of $H$.
  Let $x\in e\cdot \F A\cdot e$
  be such that $\hat\eta_{X^\ast}(x)\in H_X$.
  Then $\hat\eta_{X^\ast}(e)=\hat\eta_{X^\ast}(x^\omega)$ holds.
  Applying Proposition~\ref{p:well-defined-homor-images}\eqref{item:fundamental-syntactic-inequalities-1}, 
  we conclude that $\hat\eta_{Z^\ast}(e)=\hat\eta_{Z^\ast}(x^\omega)$.
  Since $x=exe$, it follows that \mbox{$\hat\eta_{Z^\ast}(x)\in H_Z$}.

  Suppose that $\hat\eta_{X^\ast}(x)=\hat\eta_{X^\ast}(y)\in H_X$,
  with $x,y$ both in $e\cdot \F A\cdot e$.
  As argued in the previous paragraph,
  we know that $\hat\eta_{Z^\ast}(x)$ and
  $\hat\eta_{Z^\ast}(y)$ belong to~$H_Z$.
  On the other hand,
  again by Proposition~\ref{p:well-defined-homor-images}\eqref{item:fundamental-syntactic-inequalities-1},
  from $\hat\eta_{X^\ast}(e)=\hat\eta_{X^\ast}(yx^{\omega-1})$
  we deduce $\hat\eta_{Z^\ast}(e)=\hat\eta_{Z^\ast}(yx^{\omega-1})$.
  Hence, the equality $\hat\eta_{Z^\ast}(x)=\hat\eta_{Z^\ast}(y)$ holds.

  Notice that $H_X$ is the image by $\hat\eta_{X^\ast}$ of the
  closed submonoid $M=\hat\eta_{X^\ast}^{-1}(H_X)\cap e\cdot \F A\cdot e$.
  In view of the previous two paragraphs,  we
  conclude that there is a unique homomorphism of groups
  $\alpha\colon H_X\to H_Z$ such that the diagram
      \begin{equation}\label{eq:injective-homomorphism-maximal-subgroups-2}
    \xymatrix{
      M\ar[r]^{\hat\eta_{Z^\ast}}\ar[d]_{\hat\eta_{X^\ast}}&H_Z\\
      H_X\ar[ur]_\alpha&
    }
  \end{equation}
  commutes.
  Suppose that
  $x\in M$
  is such that 
  $\hat\eta_{Z^\ast}(x)=\hat\eta_{Z^\ast}(e)$.
  Applying
  Proposition~\ref{p:fundamental-syntactic-inequalities}\eqref{item:fundamental-syntactic-inequalities-2},
  we obtain
  $\hat\eta_{X^\ast}(x)\leq \hat\eta_{X^\ast}(e)$.
  Since $\hat\eta_{X^\ast}(x)$ and $\hat\eta_{X^\ast}(e)$
  belong to the same maximal subgroup of $\Syn(X^\ast)$,
  it follows that $\hat\eta_{X^\ast}(x)=\hat\eta_{X^\ast}(e)$.
  Therefore, the homomorphism $\alpha$ is injective.
  It is a homomorphism for which clearly
  Diagram~\eqref{eq:injective-homomorphism-maximal-subgroups-1}
  commutes, since Diagram~\eqref{eq:injective-homomorphism-maximal-subgroups-2}
  commutes and $H\subseteq M$.
\end{proof}

Since we want to compare maximal subgroups
of the form $G(Z)$ and $G_F(X)$,
we are led by Theorem~\ref{t:injective-homomorphism-maximal-subgroups}
to the following couple of definitions.
Let $F$ be a recurrent subset of $A^*$.
Say that a rational code $Z$ of $A^*$
is \emph{$F$-charged} if $\hat\eta_{Z^*}$ maps
all (equivalently, some) maximal subgroups of $J(F)$ onto
maximal subgroups of $J(Z)$ (cf.~Proposition~\ref{p:maximal-onto-maximal}).
We also say that a rational code $X$ contained in $F$
is \emph{weakly $F$-charged} if $\hat\eta_{X^*}$ maps
all (equivalently, some) maximal subgroups of $J(F)$ onto
maximal subgroups of $J_F(X)$.
The next proposition gives a vast class of examples of $F$-charged codes.

\begin{Prop}\label{p:group-codes-are-connected-charged}
  Let $Z$ be a group code of $A^*$. If $F$
  is a uniformly recurrent connected set with alphabet $A$,
  then $Z$ is $F$-charged.
\end{Prop}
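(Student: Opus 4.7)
The plan is to reduce the claim to Theorem~\ref{t:projection-maximal-subgroup-connected-set} by factoring the syntactic homomorphism of a group code through the canonical projection $p_G\colon \F A \to \FG A$. The key observation is that, since $Z$ is a group code, its defining group automaton is the minimal automaton of $Z^\ast$, so the syntactic monoid $\Syn(Z^\ast)$ is isomorphic to the transition group of that automaton and is therefore a finite group. Consequently, $\Syn(Z^\ast)$ has a single $\mathcal J$-class, which coincides with $J(Z)$, and the unique maximal subgroup $G(Z)$ is all of $\Syn(Z^\ast)$.

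Because $\Syn(Z^\ast)$ is a finite (hence profinite) group, the universal property of the free profinite group yields a unique continuous homomorphism $\phi\colon \FG A \to \Syn(Z^\ast)$ extending the mapping $a \mapsto \hat\eta_{Z^\ast}(a)$ on $A$. The two continuous homomorphisms $\hat\eta_{Z^\ast}$ and $\phi \circ p_G$ from $\F A$ to $\Syn(Z^\ast)$ agree on $A$, so by the universal property of $\F A$ they agree on $\F A$, giving the factorization $\hat\eta_{Z^\ast} = \phi \circ p_G$. Note that $\phi$ is surjective, since $\hat\eta_{Z^\ast}$ already maps $A^\ast$ onto $\Syn(Z^\ast)$.

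Given any maximal subgroup $H$ of $J(F)$, Theorem~\ref{t:projection-maximal-subgroup-connected-set} gives $p_G(H) = \FG A$, hence
\[
  \hat\eta_{Z^\ast}(H) = \phi(p_G(H)) = \phi(\FG A) = \Syn(Z^\ast) = G(Z),
\]
which is the (unique) maximal subgroup of $J(Z)$. This is precisely the condition for $Z$ to be $F$-charged. The argument is essentially immediate given Theorem~\ref{t:projection-maximal-subgroup-connected-set}; the only technical point is to set up the factorization through $p_G$ via the universal properties of $\FG A$ and $\F A$, which is legitimate precisely because $\Syn(Z^\ast)$ is a profinite group, a feature that singles out group codes among rational maximal bifix codes. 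There is no real obstacle beyond this bookkeeping.
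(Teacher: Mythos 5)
Your proposal is correct and follows essentially the same route as the paper: both factor $\hat\eta_{Z^\ast}$ through the canonical projection $p_G\colon \F A\to\FG A$ using the fact that $\Syn(Z^\ast)$ is a finite group, and then conclude via Theorem~\ref{t:projection-maximal-subgroup-connected-set} that $\hat\eta_{Z^\ast}(H)=\Syn(Z^\ast)=G(Z)$. The only difference is that you spell out in more detail why $\Syn(Z^\ast)$ is a group and why the factorization exists, which the paper treats as immediate.
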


\begin{proof}
  Let $p_G$ be the canonical projection from $\F A$ onto $\FG A$.
  Since $\Syn(Z^\ast)$ is a finite group,
  we may consider the unique continuous homomorphism
  $\bar\eta_{Z^\ast}$ from the free profinite group $\FG A$ onto
  $\Syn(Z^\ast)$ such that $\bar\eta_{Z^\ast}\circ p_G=\hat \eta_{Z^\ast}$.
  Thanks to Theorem~\ref{t:projection-maximal-subgroup-connected-set},
  we have
  \begin{equation*}
    \hat\eta_{Z^\ast}(H)
  =\bar\eta_{Z^\ast}(p_G(H))
  =\bar\eta_{Z^\ast}(\FG A)=\Syn(Z^\ast)=G(Z),
\end{equation*}
establishing the proposition.
\end{proof}

With the definitions of $F$-charged and weakly $F$-charged code on hand, we state the following corollary of
Theorem~\ref{t:injective-homomorphism-maximal-subgroups}.

\begin{Cor}\label{c:injective-homomorphism-maximal-subgroups}
  Let $F$ be a factorial subset of $A^*$.
  Suppose that $Z$
  is a rational prefix code of $A^*$ and that
  $X=Z\cap F$ is a rational $F$-maximal prefix code.
  Suppose also that the elements of $J(F)$ are forbidden in $\overline{Z}$.
  The following conditions are equivalent:
  \begin{enumerate}
  \item $Z$ is $F$-charged;\label{item:injective-homomorphism-maximal-subgroups-1}
  \item $G_F(X)\simeq G(Z)$ and
    $X$ is weakly $F$-charged;\label{item:injective-homomorphism-maximal-subgroups-2}
  \item $|G_F(X)|=|G(Z)|$ and
    $X$ is weakly $F$-charged.\label{item:injective-homomorphism-maximal-subgroups-3}
  \end{enumerate}
\end{Cor}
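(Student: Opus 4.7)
My plan is to fix a maximal subgroup $H$ of $J(F)$ with idempotent $e$ and apply Theorem~\ref{t:injective-homomorphism-maximal-subgroups} to obtain the injective group homomorphism $\alpha\colon H_X\to H_Z$ making Diagram~\eqref{eq:injective-homomorphism-maximal-subgroups-1} commute. The hypotheses of that theorem hold, since $H\subseteq J(F)\subseteq\overline F$ and the elements of $J(F)$ are forbidden in $\overline Z$ by assumption. A preliminary observation I would record once and for all is that $H_X\subseteq J_F(X)$, and hence $|H_X|=|G_F(X)|$: the idempotent $\hat\eta_{X^\ast}(e)$ of $H_X$ lies in $\hat\eta_{X^\ast}(J(F))\subseteq J_F(X)$, and a maximal subgroup is confined to a single $\mathcal J$-class.

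For $(1)\Rightarrow(2)$, $F$-chargedness unpacks as $\hat\eta_{Z^\ast}(H)=H_Z$ with $H_Z\subseteq J(Z)$, so $|H_Z|=|G(Z)|$. The commutative diagram gives $\alpha(\hat\eta_{X^\ast}(H))=\hat\eta_{Z^\ast}(H)=H_Z$, and injectivity of $\alpha$ forces $|\hat\eta_{X^\ast}(H)|=|H_Z|$. Combined with the inclusion $\hat\eta_{X^\ast}(H)\subseteq H_X$ and the injection $\alpha\colon H_X\to H_Z$, the cardinalities pinch to $|\hat\eta_{X^\ast}(H)|=|H_X|=|H_Z|$. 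Hence $\hat\eta_{X^\ast}(H)=H_X$, so $X$ is weakly $F$-charged, and $\alpha$ is a bijective group homomorphism, yielding $G_F(X)\simeq G(Z)$. The implication $(2)\Rightarrow(3)$ is immediate.

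For $(3)\Rightarrow(1)$, weakly $F$-chargedness of $X$ gives $\hat\eta_{X^\ast}(H)=H_X$ with $|H_X|=|G_F(X)|=|G(Z)|$. The commuting diagram together with injectivity of $\alpha$ provide $\hat\eta_{Z^\ast}(H)=\alpha(H_X)\subseteq H_Z$ with $|\hat\eta_{Z^\ast}(H)|=|G(Z)|$. To conclude that $Z$ is $F$-charged, the subgroup $\hat\eta_{Z^\ast}(H)$ must coincide with $H_Z$ and $H_Z$ must lie in $J(Z)$; both consequences follow once one shows $\hat\eta_{Z^\ast}(e)\in J(Z)$, since then $|H_Z|=|G(Z)|$ and the size match forces $\alpha$ bijective.

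The hard part will be precisely this containment $\hat\eta_{Z^\ast}(e)\in J(Z)$. My plan for it is to approximate $e$ by words $u_n\in F$ and invoke the forbidden-in-$\overline Z$ hypothesis to conclude that the $u_n$ eventually fail to be internal factors of elements of $Z$. A parse-counting argument in the spirit of Proposition~\ref{p:equal-degree-imlies-forbidden}, using the continuity statement of Proposition~\ref{p:the-parse-is-continuous} and adapting Proposition~\ref{p:d-parses-implies-you-are-in-JFX} to the present prefix setting, will then place $\hat\eta_{Z^\ast}(u_n)$, and hence by continuity $\hat\eta_{Z^\ast}(e)$, in $J(Z)$, completing the circle of implications.
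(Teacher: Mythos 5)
Your diagram chase is exactly the paper's argument: fix a maximal subgroup $H$ of $J(F)$, invoke Theorem~\ref{t:injective-homomorphism-maximal-subgroups} to get the injection $\alpha\colon H_X\to H_Z$, note that $H_X\subseteq J_F(X)$ so that $|H_X|=|G_F(X)|$, and pinch cardinalities; that part is correct and in places more explicit than the paper (which does not spell out why $X$ is weakly $F$-charged in the implication from (1) to (2)). The divergence is in the step you rightly isolate as the hard part, namely $\hat\eta_{Z^\ast}(e)\in J(Z)$ --- equivalently, that $H_Z$ is a maximal subgroup of the minimum ideal --- which is what licenses writing $|H_Z|=|G(Z)|$ in the implication from (3) to (1). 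The paper does not prove this inside the corollary at all: its proof simply rewrites ``$|G_F(X)|=|G(Z)|$'' as ``$|H_Z|=|H_X|$'', silently treating $H_Z$ as a maximal subgroup of $J(Z)$, and the containment $\hat\eta_{Z^\ast}(J(F))\subseteq J(Z)$ is supplied externally, by Proposition~\ref{p:equal-degree-imlies-forbidden}, at the one point where the corollary is applied (the proof of Theorem~\ref{t:group-code-isomor-maximal-subgroups}, where $Z$ is a rational maximal bifix code of finite degree).

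The genuine gap is that your proposed route to this containment would not go through at the corollary's stated level of generality. Proposition~\ref{p:equal-degree-imlies-forbidden}, Theorem~\ref{t:characterization-of-finite-degree-codes} and Proposition~\ref{p:d-parses-implies-you-are-in-JFX} all presuppose (maximal) bifix codes of finite degree; here $Z$ is only a rational prefix code, which need not be maximal and can have $d(Z)=\infty$ (already $Z=\{ab\}$ over $\{a,b\}$ gives $\delta_Z(a^n)=n+1$), so the equivalence ``$w$ is an internal factor of $Z$ iff $\delta_Z(w)<d(Z)$'' and the link between $\delta_Z(u)=d(Z)$ and minimal rank are unavailable, and ``adapting to the prefix setting'' is not a routine matter. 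The workable options are either to take $\hat\eta_{Z^\ast}(J(F))\subseteq J(Z)$ as an additional hypothesis (harmless, since it holds in the intended application) or to place yourself in the bifix, finite-degree setting where Proposition~\ref{p:equal-degree-imlies-forbidden} delivers it; as written, your final paragraph is a promissory note whose redemption plan would fail.
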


\begin{proof}
  We retain the notation used in Theorem~\ref{t:injective-homomorphism-maximal-subgroups}.
  If we have the equality $\hat\eta_{Z^\ast}(H)=H_Z$, then the homomorphism
  $\alpha$ in Diagram~\eqref{eq:injective-homomorphism-maximal-subgroups-1}
  is onto. As
  Theorem~\ref{t:injective-homomorphism-maximal-subgroups} also asserts that
  $\alpha$ is injective,
  we conclude
  that~\eqref{item:injective-homomorphism-maximal-subgroups-1}$\Rightarrow$\eqref{item:injective-homomorphism-maximal-subgroups-2}
  holds.

  The implication~\eqref{item:injective-homomorphism-maximal-subgroups-2}$\Rightarrow$\eqref{item:injective-homomorphism-maximal-subgroups-3}
  is trivial.

  Finally, suppose that
  $|G_F(X)|=|G(Z)|$, that is, that $|H_Z|=|H_X|$.
  Then the injective homomorphism~$\alpha$
  is onto, because $H_Z$ is finite.
  Therefore, if  $|H_Z|=|H_X|$
  and $\hat\eta_{X^\ast}(H)=H_X$, we must have $\hat\eta_{Z^\ast}(H)=H_Z$
  by the commutativity of Diagram~\eqref{eq:injective-homomorphism-maximal-subgroups-1}, thus \eqref{item:injective-homomorphism-maximal-subgroups-3}$\Rightarrow$\eqref{item:injective-homomorphism-maximal-subgroups-1}.
\end{proof}

We are now ready to show our main result,
recovering and generalizing the case, treated in the manuscript~\cite{Kyriakoglou&Perrin:2017},
for group codes and uniformly recurrent sets.

\begin{Thm}\label{t:group-code-isomor-maximal-subgroups}
   Let $F$ be a recurrent subset of $A^*$.
  Suppose that $Z$
  is a rational bifix code of finite degree. Let $X=Z\cap F$ and suppose
  that $X$ is rational.
  The following conditions are equivalent:
  \begin{enumerate}
  \item $Z$ is $F$-charged\label{item:group-code-isomor-maximal-subgroups-1};
  \item $d_F(X)=d(Z)$, $G_F(X)\simeq G(Z)$ and
    $X$ is weakly $F$-charged\label{item:group-code-isomor-maximal-subgroups-2};
    \item $d_F(X)=d(Z)$, $|G_F(X)|=|G(Z)|$ and
    $X$ is weakly $F$-charged\label{item:group-code-isomor-maximal-subgroups-3}. 
  \end{enumerate}  
\end{Thm}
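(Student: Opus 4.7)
The plan is to reduce the theorem to Corollary~\ref{c:injective-homomorphism-maximal-subgroups}. Each of conditions $(2)$ and $(3)$ contains the equality $d_F(X)=d(Z)$, which by Proposition~\ref{p:equal-degree-imlies-forbidden} is equivalent to the hypothesis ``elements of $J(F)$ are forbidden in $\overline{Z}$'' used in that corollary. Under this hypothesis Theorem~\ref{t:thin-code-intersects-F} and Remark~\ref{rmk:rational-codes-are-thin} ensure that $X$ is a rational $F$-maximal bifix code, hence in particular a rational $F$-maximal prefix code, so the hypotheses of Corollary~\ref{c:injective-homomorphism-maximal-subgroups} hold. This yields $(2)\Rightarrow(1)$, $(3)\Rightarrow(1)$ and $(2)\Leftrightarrow(3)$ directly from that corollary.

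The sole remaining implication is $(1)\Rightarrow d_F(X)=d(Z)$: once shown, the rest of $(2)$ and $(3)$ will follow from Corollary~\ref{c:injective-homomorphism-maximal-subgroups} as above. I intend to argue it by contradiction. Suppose $Z$ is $F$-charged but $d_F(X)<d(Z)$. By Proposition~\ref{p:equal-degree-imlies-forbidden} some $u\in J(F)$ is a factor of some $z\in\overline{Z}$; since the elements of a regular $\mathcal{J}$-class are mutually factors, I may take $u$ to be the idempotent $e$ of a maximal subgroup $H$ of $J(F)$, so $z=\gamma e\delta$ for some $\gamma,\delta\in\F A$. Proposition~\ref{p:idempotents-are-in-the-closure-of-Xast} then gives $e\in\overline{X^{\ast}}\subseteq\overline{Z^{\ast}}$. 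Approximating $z$ by codewords $z_n\in Z$ and invoking Lemma~\ref{l:refine-factorization}, one obtains factorizations $z_n=\gamma_n e_n\delta_n$ with $e_n\to e$; by Theorem~\ref{t:closure-of-rational-lang} one has $e_n\in Z^{\ast}$ for large $n$.

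The bifix structure of $Z$ and the $F$-charged hypothesis are then used to rule out all possibilities for such a factorization of the single codeword $z_n$. The two easy cases are $e_n=1$ (which would force $e=1$, contradicting that the infinite pseudoword $e$ lies in $J(F)$) and $e_n=z_n$ (which gives $e=z\in\overline{Z}$, whereupon the idempotent identity $e=e\cdot e$ combined with a second application of Lemma~\ref{l:refine-factorization} and the prefix property of $Z$ forces $e=1$, since a non-trivial proper prefix of a codeword in a prefix code cannot itself be a codeword). The hard case, in which $e_n$ is a non-trivial proper internal $Z^{\ast}$-factor of $z_n$ (a possibility for non-uniform codes), is the main obstacle; here I plan to use the full strength of $F$-charged, namely that $\hat\eta_{Z^{\ast}}(e)\in G(Z)\subseteq J(Z)$, so that $\eta_{Z^{\ast}}(e_n)$ lies in $J(Z)$ eventually, and to derive a contradiction by a rank comparison in the minimal automaton of $Z^{\ast}$ together with Proposition~\ref{p:d-parses-implies-you-are-in-JFX}, in conjunction with Theorem~\ref{t:characterization-of-finite-degree-codes} applied to the internal factor $e_n$ of the codeword $z_n$.
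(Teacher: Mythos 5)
Your reduction of the three equivalences to Corollary~\ref{c:injective-homomorphism-maximal-subgroups} via Proposition~\ref{p:equal-degree-imlies-forbidden} is correct and coincides with the paper's strategy; as you say, everything hinges on showing that condition~(1) forces $d_F(X)=d(Z)$, i.e.\ that the idempotent $e$ of a maximal subgroup $H$ of $J(F)$ cannot be a factor of an element of $\overline{Z}$. The genuine gap is in your ``hard case''. The ingredients you list there --- $\hat\eta_{Z^\ast}(e)\in J(Z)$, hence $\eta_{Z^\ast}(e_n)$ an idempotent of the minimum ideal of rank $d(Z)$ for large $n$, together with $\delta_Z(e_n)<d(Z)$ from Theorem~\ref{t:characterization-of-finite-degree-codes} --- are mutually consistent, so no rank comparison can produce a contradiction from them: Proposition~\ref{p:d-parses-implies-you-are-in-JFX} only gives $\delta_Z(w)=d(Z)\Rightarrow\operatorname{rank}(w)=d(Z)$, and the paper points out that the converse fails. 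Concretely, for the code $Z=\{aa,ab,ba\}\cup b^2(a^+b)^\ast b$ of Example~\ref{eg:naive-sofic-example}, the codeword $b^2(ab)(ab)b$ has the internal factor $ab\in Z$ whose syntactic image is an idempotent of rank $3=d(Z)$ lying in $J(Z)$; so a codeword of a rational maximal bifix code can perfectly well contain an internal $Z^{+}$-factor with idempotent image in the minimum ideal. Worse, the only consequence of $F$-chargedness you feed into the hard case is $\hat\eta_{Z^\ast}(e)\in J(Z)$, which is strictly weaker than $F$-chargedness: it holds automatically for every group code (where $\Syn(Z^\ast)=J(Z)$), and Example~\ref{eg:non-forbidden-example} gives a group code $Z$ and a uniformly recurrent $F$ with $d_F(Z\cap F)<d(Z)$. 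In that example all the hypotheses your hard-case argument uses are satisfied while the desired conclusion is false, so no argument built only from those hypotheses can close the case.

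What is missing is the surjectivity $\hat\eta_{Z^\ast}(H)=H_Z$ onto the whole maximal subgroup, used jointly with the group structure of $H$; this is exactly how the paper finishes. From $\alpha u\beta\in\overline{Z}$ with $u\in H$, $u=eu$ and $\alpha=\alpha e$, set $p=i_Z\cdot\alpha$ and choose $v\in A^*$ with $p\cdot v=i_Z$ (trimness); since $p\cdot e=p$ and $i_Z\cdot e=i_Z$ (Proposition~\ref{p:idempotents-are-in-the-closure-of-Xast}), one gets $p\cdot eve=i_Z$ with $\hat\eta_{Z^\ast}(eve)$ in the maximal subgroup at $\hat\eta_{Z^\ast}(e)$, which by $F$-chargedness equals $\hat\eta_{Z^\ast}(H)$; hence there is $w\in H$ with $\alpha w\in\overline{Z^{+}}$, and writing $u=wt$ with $t\in H$ exhibits $\alpha u\beta\in\overline{Z^{+}A^{+}}$, contradicting that $Z$ is a prefix code. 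You will need an argument of this kind --- exploiting that $H$ maps \emph{onto} $H_Z$ and that $H$ is a group --- rather than the local data about the finite approximants $e_n$.
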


\begin{proof}
  Let $H$ be a maximal subgroup of $J(F)$.
  Denote by $H_X$ the maximal subgroup
  of $\Syn(X^\ast)$ containing $\hat\eta_{X^\ast}(H)$
  and by $H_Z$ the maximal subgroup of $\Syn(Z^\ast)$ containing $\hat\eta_{Z^\ast}(H)$.
  
    The implication
  \eqref{item:group-code-isomor-maximal-subgroups-2}$\Rightarrow$\eqref{item:group-code-isomor-maximal-subgroups-3} its trivial.
  If the equality $d_F(X)=d(Z)$ is satisfied,
  then, by Proposition~\ref{p:equal-degree-imlies-forbidden},
  the elements of~$J(F)$ are forbidden in~$\overline{Z}$ and their image
  under $\hat\eta_{Z^\ast}$ belongs to $J(Z)$,
  and so, by
  Corollary~\ref{c:injective-homomorphism-maximal-subgroups},
  we have~\eqref{item:group-code-isomor-maximal-subgroups-3}$\Rightarrow$\eqref{item:group-code-isomor-maximal-subgroups-1}.
  Also by Corollary~\ref{c:injective-homomorphism-maximal-subgroups} and Proposition~\ref{p:equal-degree-imlies-forbidden}, to prove the implication
  \eqref{item:group-code-isomor-maximal-subgroups-1}$\Rightarrow$\eqref{item:group-code-isomor-maximal-subgroups-2}
  it suffices to show that $\hat\eta_{Z^\ast}(H)=H_Z$ implies that $d_F(X)=d(Z)$.

  Suppose that $\hat\eta_{Z^\ast}(H)=H_Z$.
  Let $u\in H$ and let $e$ be the idempotent in~$H$.
  Suppose that $u$ is a factor of an element of $\overline{Z}$.
  Let $\alpha,\beta\in\F A$ be such that $\alpha u\beta\in\overline{Z}$.
  Since $u=eu$, we may as well assume that $\alpha=\alpha e$.
  Consider the minimal automaton $\mathcal M_{Z^\ast}$ of~$Z^\ast$, and let $i$ be its
  initial state, which is also the unique final state.
  Let $p=i\cdot \alpha$.
  Since $\mathcal M_{Z^\ast}$ is trim,
  there is $v\in A^*$ such that $p\cdot v=i$.
    Notice that $p=p\cdot e$, because $\alpha=\alpha e$.  
  On the other hand, $i$ is fixed by~$e$,
  by Proposition~\ref{p:idempotents-are-in-the-closure-of-Xast}.
  It follows that $p\cdot eve=i$.
  Because $\hat\eta_{Z^\ast}(H)$ is a maximal subgroup of
  the minimum ideal $J(Z)$,
  whose idempotent is $\hat\eta_{Z^\ast}(e)$, 
  there is $w\in H$
  such that $\hat\eta_{Z^\ast}(eve)=\hat\eta_{Z^\ast}(w)$.
  Notice that $i\cdot \alpha w=p\cdot w=p\cdot eve=i$,
  whence $\alpha w\in\overline{Z^+}$.
  On the other hand, as $u$ and $w$ both belong to the group $H$,
  we may consider $t\in H$ such that $u=wt$.
  Note that $\alpha u\beta=\alpha wt\beta\in \overline{Z^+A^+}$.
  By our hypothesis that $\alpha u\beta$ belongs to~$\overline {Z}$,
  and due to the rationality of $Z$, we conclude that $Z\cap Z^+ A^+\neq \emptyset$, contradicting the hypothesis
  that $Z$ is a bifix code. As the absurd originated from assuming that
  $u$ is a factor of an element of $\overline{Z}$, in view of Proposition~\ref{p:equal-degree-imlies-forbidden} this concludes
  the proof that $d_F(X)=d(Z)$ if $\hat\eta_{Z^\ast}(H)$ is a maximal subgroup
  of~$J(Z)$.  
\end{proof}

Recall that, in the setting of Theorem~\ref{t:group-code-isomor-maximal-subgroups}, if $F$ is uniformly recurrent, then $X$
is finite (cf.~Theorem~\ref{t:thin-code-intersects-F}).

\begin{Cor}\label{t:equal-degree-connected-set-group-code}
  Consider a group code $Z$ of $A^*$ and let $F$
  be a uniformly recurrent connected set with alphabet $A$.
  Take the intersection $X=Z\cap F$. We have $d_F(X)=d(Z)$ and
  $G_F(X)\simeq G(Z)$.
\end{Cor}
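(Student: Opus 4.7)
The plan is to obtain the corollary as a direct application of Theorem~\ref{t:group-code-isomor-maximal-subgroups}, using Proposition~\ref{p:group-codes-are-connected-charged} to verify its main hypothesis. First I would check that the assumptions of Theorem~\ref{t:group-code-isomor-maximal-subgroups} are satisfied in the setting of the corollary: a uniformly recurrent set is in particular recurrent; a group code is a rational maximal bifix code of finite degree (as recalled in the preliminaries, the degree equals the number of states of the defining group automaton); and since $F$ is uniformly recurrent, Theorem~\ref{t:thin-code-intersects-F} ensures that $X = Z\cap F$ is finite, hence rational.

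Next, I would invoke Proposition~\ref{p:group-codes-are-connected-charged}, which says precisely that a group code $Z$ is $F$-charged whenever $F$ is a uniformly recurrent connected set over the alphabet of $Z$. This gives us condition~\eqref{item:group-code-isomor-maximal-subgroups-1} of Theorem~\ref{t:group-code-isomor-maximal-subgroups}.

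Finally, the implication \eqref{item:group-code-isomor-maximal-subgroups-1}$\Rightarrow$\eqref{item:group-code-isomor-maximal-subgroups-2} of Theorem~\ref{t:group-code-isomor-maximal-subgroups} immediately yields both $d_F(X)=d(Z)$ and the isomorphism $G_F(X)\simeq G(Z)$, which is the content of the corollary. There is no genuine obstacle here, because all of the substantive work has already been done: the connectedness of $F$ enters only through Theorem~\ref{t:projection-maximal-subgroup-connected-set} (used in the proof of Proposition~\ref{p:group-codes-are-connected-charged}), while the delicate syntactic inequalities that turn the $F$-charged hypothesis into the degree equality and group isomorphism are packaged inside Theorem~\ref{t:group-code-isomor-maximal-subgroups}. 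So the proof is essentially a two-line citation: apply Proposition~\ref{p:group-codes-are-connected-charged}, then apply Theorem~\ref{t:group-code-isomor-maximal-subgroups}.
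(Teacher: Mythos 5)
Your proposal is correct and is exactly the paper's own argument: the authors' proof reads ``It suffices to invoke Proposition~\ref{p:group-codes-are-connected-charged} and apply Theorem~\ref{t:group-code-isomor-maximal-subgroups}.'' Your additional verification of the hypotheses (that $F$ is recurrent, that $Z$ is a rational bifix code of finite degree, and that $X$ is finite hence rational via Theorem~\ref{t:thin-code-intersects-F}) is left implicit in the paper but is accurate and welcome.
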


\begin{proof}
  It suffices to
  invoke Proposition~\ref{p:group-codes-are-connected-charged}
  and apply Theorem~\ref{t:group-code-isomor-maximal-subgroups}.
\end{proof}

\begin{Example}
  \label{eg:exampleDegree2}
  Let $A=\{a,b\}$ and consider the group code $Z=A^2$,
  whose syntactic group is the cyclic group of
  order~$2$.
  Let $F$ be the Fibonacci set. Note
  that $Z$ is $F$-charged, by Proposition~\ref{p:group-codes-are-connected-charged}.
  Therefore, the bifix code $X=F\cap Z=\{aa,ab,ba\}$
  is weakly $F$-charged by Theorem~\ref{t:group-code-isomor-maximal-subgroups}.
  The minimal automaton and the egg-box diagram of the syntactic monoid of
  $X^*$ are shown in Figure~\ref{figureAutomaton}.
  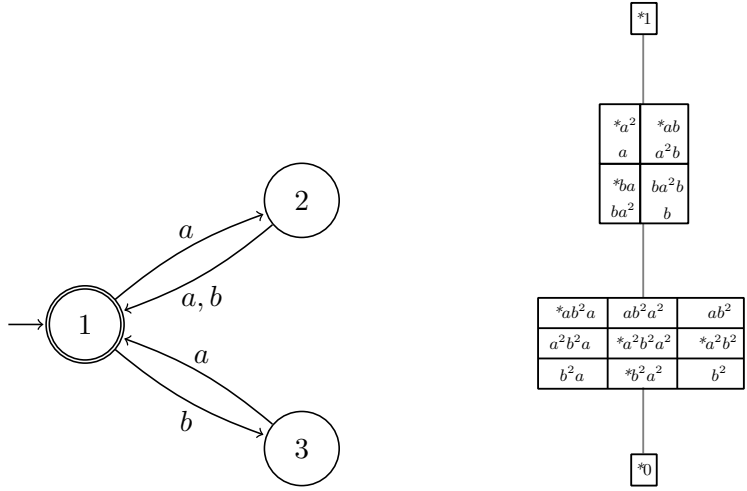
\begin{figure}[h]
    \centering
     \begin{tikzpicture}[shorten >=1pt, node distance=2.3cm and 2.7cm,
       on grid,initial text=,semithick]
       \node[state,initial,accepting]   (1)  at (180:1.9)  {$1$};
       \node[state]   (2) at (60:1.9)    {$2$};
       \node[state]   (3) at (-60:1.9)   {$3$};
       \path[->]
       (1)   edge  [bend left=10] node [above] {$a$} (2)
       (2)   edge  [bend left=10] node [below] {$a,b$} (1)
       (1)   edge  [bend right=10] node [below]  {$b$} (3)
       (3)   edge  [bend right=10] node [above]  {$a$} (1);
     \end{tikzpicture}
     \qquad\qquad\qquad     
\scalebox{.7}{
\footnotesize
\begin{tikzpicture}[line join=bevel,scale=0.8]
  \pgfsetlinewidth{1pt}
  \pgfsetcolor{gray}
  \draw [] (78.0bp,72.946bp) .. controls (78.0bp,59.438bp) and (78.0bp,40.3bp)  .. (78.0bp,28.676bp);
  \draw [] (78.0bp,310.14bp) .. controls (78.0bp,298.78bp) and (78.0bp,280.25bp)  .. (78.0bp,263.2bp);
  \draw [] (78.0bp,182.93bp) .. controls (78.0bp,168.59bp) and (78.0bp,152.65bp)  .. (78.0bp,133.13bp);
\pgfsetcolor{black}
  \pgfsetfillcolor{white}
\begin{scope}
  \draw (78.0bp,18.0bp) node {*$0$};
  \draw (70.0bp,7.0bp) -- (70.0bp,28.0bp) -- (87.0bp,28.0bp) -- (87.0bp,7.0bp) -- cycle;
\end{scope}
\begin{scope}
  \draw (52.0bp,248.8bp) node[right] {*$a^2$};
  \draw (57.0bp,229.8bp) node[right] {$a$};
  \draw (49.0bp,223.0bp) -- (49.0bp,263.0bp) -- (76.0bp,263.0bp) -- (76.0bp,223.0bp) -- cycle;
  \draw (81.0bp,248.8bp) node[right] {*$ab$};
  \draw (81.5bp,231.8bp) node[right] {$a^2b$};
  \draw (76.0bp,223.0bp) -- (76.0bp,263.0bp) -- (108.0bp,263.0bp) -- (108.0bp,223.0bp) -- cycle;
  \draw (51.5bp,208.8bp) node[right] {*$ba$};
  \draw (52.0bp,191.8bp) node[right] {$ba^2$};
  \draw (49.0bp,183.0bp) -- (49.0bp,223.0bp) -- (76.0bp,223.0bp) -- (76.0bp,183.0bp) -- cycle;
  \draw (79.0bp,208.8bp) node[right] {$ba^2b$};
  \draw (87.0bp,189.8bp) node[right] {$b$};
  \draw (76.0bp,183.0bp) -- (76.0bp,223.0bp) -- (108.0bp,223.0bp) -- (108.0bp,183.0bp) -- cycle;
\end{scope}
\begin{scope}
  \draw (13.5bp,124.3bp) node[right] {*$ab^2a$};
  \draw (60.0bp,124.3bp) node[right] {$ab^2a^2$};
  \draw (116.5bp,124.3bp) node[right] {$ab^2$};
  \draw (11.0bp,103.3bp) node[right] {$a^2b^2a$};
  \draw (54.0bp,103.3bp) node[right] {*$a^2b^2a^2$};
  \draw (109.0bp,103.3bp) node[right] {*$a^2b^2$};
  \draw (18.0bp,82.3bp) node[right] {$b^2a$};
  \draw (59.5bp,82.3bp) node[right] {*$b^2a^2$};
  \draw (119.0bp,82.3bp) node[right] {$b^2$};
  \draw (8.0bp,72.0bp) -- (8.0bp,133.0bp) -- (145.0bp,133.0bp) -- (145.0bp,72.0bp) -- cycle;
  \draw (54.333bp,72.0bp) -- (54.333bp,133.0bp);
  \draw (100.666bp,72.0bp) -- (100.666bp,133.0bp);
  \draw (8.0bp,92.333bp) -- (145.0bp,92.333bp);
  \draw (8.0bp,112.666bp) -- (145.0bp,112.666bp);
\end{scope}
\begin{scope}
  \draw (78.0bp,321.0bp) node {*$1$};
  \draw (70.0bp,310.0bp) -- (70.0bp,331.0bp) -- (87.0bp,331.0bp) -- (87.0bp,310.0bp) -- cycle;
\end{scope}
\end{tikzpicture}
}     
\caption{Example~\ref{eg:exampleDegree2}: minimal automaton of $X^*$ and egg-box diagram of its transition monoid.}
\label{figureAutomaton}
\end{figure}
  Since $d_F(X)=2$, the $F$-minimum $\mathcal J$-class of $\Syn(X^*)$
  has rank~$2$, and so $J_F(X)$ is the $\mathcal J$-class of $\eta_{X^*}(a)$.
  This can be seen also
  using Corollary~\ref{t:equal-degree-connected-set-group-code}.
  Indeed, the group $G_F(X)$ is the cyclic group of order~$2$, as expected by Corollary~\ref{t:equal-degree-connected-set-group-code},
  and so $J_F(X)$ must be the unique $\mathcal J$-class of $\Syn(X^*)$ with a maximal subgroup
  of order $2$.
\end{Example}

The special case of Corollary~\ref{t:equal-degree-connected-set-group-code}
for uniformly recurrent tree sets
is established in~\cite{Kyriakoglou&Perrin:2017} as a consequence of the specialization of Theorem~\ref{t:group-code-isomor-maximal-subgroups} for the setting of uniformly recurrent sets and group codes.
The special case of Corollary~\ref{t:equal-degree-connected-set-group-code} in which $Z$ is a group code  and $F$ is a Sturmian set
was first proved in~\cite{Berstel&Felice&Perrin&Reutenauer&Rindone:2012}.

In the next example we see that both conclusions
of Corollary~\ref{t:equal-degree-connected-set-group-code} fail
if we only require $F$ to be uniformly recurrent, even if $Z$
is still a group code.

\begin{Example}
  Consider the group code $Z$ and the uniformly recurrent set~$F$
  of Example~\ref{eg:non-forbidden-example},  and let $X=Z\cap F$.
  We saw in Example~\ref{eg:non-forbidden-example} that $d_F(X)<d(Z)=4$.
  Therefore,
  by Corollary~\ref{c:upper-bound-size-FGroups}, we have $G(Z)\leq S_4$  and
  $G_F(X)\leq S_3$. Actually,
  a direct computation shows that $G(Z)\simeq S_4$.
\end{Example}

The following is an example of application of Theorem~\ref{t:group-code-isomor-maximal-subgroups} to the setting of recurrent sets which are not
uniformly recurrent and of maximal bifix codes which are not group codes.
It is based on \cite[Example 4.2.15]{Berstel&Felice&Perrin&Reutenauer&Rindone:2012}.

\begin{Example}\label{eg:naive-sofic-example}
  Let $A=\{a,b\}$ and $Z=\{aa,ab,ba\}\cup b^2(a^+b)^\ast b$.
  The language $Z$ is a maximal bifix code
  of degree $3$.
   It is not a group code.
   This can be seen by applying Theorem~\ref{t:equal-degree-tree-set-group-code},
   because if $F$ is the Fibonacci set, then $Z\cap F=\{aa,ab,ba\}$
   has $F$-degree $2$.
   \begin{figure}[h]
     \centering
     \begin{tikzpicture}[shorten >=1pt, node distance=2.3cm and 2.7cm,
       on grid,initial text=,semithick]
       \node[state,initial,accepting]   (1)   {$1$};
       \node[state]   (2) [right=of 1]    {$2$};
       \node[state]   (3) [below=of 1]   {$3$};
       \node[state]   (4) [right=of 3]   {$4$};
       \node[state]   (5) [below=of 4]   {$5$};
       \path[->]
       (1)   edge  [bend right=10] node [below] {$a$} (2)
       (2)   edge  [bend right=10] node [above] {$a,b$} (1)
       (1)   edge  [bend right=10] node [left]  {$b$} (3)
       (3)   edge  [bend right=10] node [below] {$b$} (4)
       (3)   edge  [bend right=10] node [right] {$a$} (1)
       (4)   edge                  node [below] {$b$} (1)
       (4)   edge  [bend right=10] node [left]  {$a$} (5)
       (5)   edge  [bend right=10] node [right] {$b$} (4)
       (5)   edge  [loop left]     node         {$a$} ();
     \end{tikzpicture}
     \qquad\qquad\qquad     
\footnotesize
\begin{tikzpicture}[>=latex,line join=bevel,scale=0.6]
  \definecolor{fillcol}{rgb}{1.0,1.0,1.0};
  \pgfsetfillcolor{fillcol}
  \definecolor{strokecol}{rgb}{0.0,0.0,0.0};
  \pgfsetstrokecolor{strokecol}
  \pgfsetlinewidth{1pt}
  \pgfsetcolor{gray}
  \draw [] (61.5bp,283.38bp) .. controls (61.5bp,272.9bp) and
  (61.5bp,255.67bp)  .. (61.5bp,236.23bp);
  \pgfsetlinewidth{0.4pt}
  \pgfsetcolor{black}
\begin{scope}
  \draw (21.5bp,221.8bp) node[right] {*$a^2$};
  \draw (18.5bp,202.8bp) node[right] {$ab^2a$};
  \draw (31.5bp,183.8bp) node[right] {$a$};
  \draw (11.5bp,164.8bp) node[right] {$ab^2a^2$};
  \draw (11.5bp,145.8bp) node[right] {$a^2b^2a$};
  \draw (18.5bp,126.8bp) node[right] {$ab^3a$};
  \draw (78bp,221.8bp) node[right] {*$ab$};
  \draw (68bp,202.8bp) node[right] {$a^2b^3$};
  \draw (75bp,183.8bp) node[right] {$a^2b$};
  \draw (75bp,164.8bp) node[right] {$ab^2$};
  \draw (75bp,145.8bp) node[right] {$ab^3$};
  \draw (68bp,126.8bp) node[right] {$a^2b^2$};
  \draw (24.5bp,105.8bp) node[right] {*$ba$};
  \draw (14.5bp,86.8bp) node[right] {$b^3a^2$};
  \draw (21.5bp,67.8bp) node[right] {$ba^2$};
  \draw (21.5bp,48.8bp) node[right] {$b^3a$};
  \draw (21.5bp,29.8bp) node[right] {$b^2a$};
  \draw (14.5bp,10.8bp) node[right] {$b^2a^2$};
  \draw (71.5bp,105.8bp) node[right] {$ba^2b$};
  \draw (74.5bp,86.8bp) node[right] {*$b^3$};
  \draw (84.5bp,67.8bp) node[right] {$b$};
  \draw (64.5bp,48.8bp) node[right] {$ba^2b^2$};
  \draw (64.5bp,29.8bp) node[right] {$b^2a^2b$};
  \draw (78bp,10.8bp) node[right] {$b^2$};
  \draw (8bp,0bp) -- (8bp,236bp) -- (118bp,236bp) --
  (118bp,0bp) -- cycle;
  \draw (63bp,0bp) -- (63bp,236bp);
  \draw (8bp,118bp) -- (118bp,118bp);
\end{scope}
\begin{scope}
  \draw (61.5bp,294bp) node {*1};
  \draw (53bp,283bp) -- (73bp,283bp) --
  (73bp,305bp) -- (53bp,305bp) -- cycle;
\end{scope}
\end{tikzpicture}
    \caption{Example~\ref{eg:naive-sofic-example}: minimal automaton of $Z^\ast$ and
      egg-box diagram of its transition monoid.}
    \label{fig:first-automaton}
  \end{figure}
   
  The monoid $\Syn(Z^\ast)$ is the disjoint union
  of its unit with its minimum ideal: see Figure~\ref{fig:first-automaton}.
  Let $E$ be the recurrent set
  formed by the words labeling finite paths
  of the labeled graph $\mathcal G$ in Figure~\ref{fig:even-shift}
  (this is the set of factors of the so called \emph{even subshift}, see~\cite{Lind&Marcus:1996}).
     \begin{figure}[h]
     \centering
     \begin{tikzpicture}[shorten >=1pt, node distance=3cm, on grid,initial text=,semithick]
  \node[state]   (1)                {$1$};
  \node[state]   (2) [right=of 1]   {$2$};
  \path[->]  (1)   edge  [bend right=30] node [below] {$b$} (2)
             (2)   edge  [bend right=30]  node [above] {$b$} (1)
             (1)      edge [loop left]   node        {$a$} ();
\end{tikzpicture}
     \caption{Presentation of the even subshift.}
     \label{fig:even-shift}
   \end{figure}
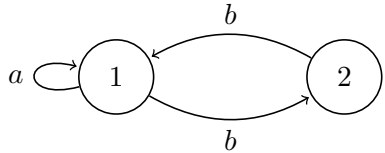
  The set $X=Z\cap E$ is given by
  $X=\{aa,ab,ba,bbb\}\cup b^2a^+b^2$.
  Since $a\in E$ and $E$ is recurrent,
  there is $u\in J(E)$ such that $u\in a\cdot \F A\cdot a$.
  Let $v=u^\omega ab^2a^2 u^\omega$. Because the action of $a$ in $\mathcal G$
  consists in just fixing $1$,
  we have $1=1\cdot ab^2a^2=1\cdot u$ in~$\mathcal G$. 
  Hence, $u^\omega$ and $v$ belong to $J(E)$.
Denote by $H$ the maximal subgroup of $J(E)$ containing~$u^\omega$.
Notice that $v\in H$.
Similarly, for $w=u^\omega a u^\omega$ we have $w\in H$. 
The maximal subgroup $K$ of
$\Syn(Z^\ast)$ containing $\hat\eta_{Z^\ast}(a)$ is isomorphic to
$S_3$
and it is generated by $\{\hat\eta_{Z^\ast}(a),\hat\eta_{Z^\ast}(ab^2a^2)\}$.
Clearly, one has $\hat\eta_{Z^\ast}(v)=\hat\eta_{Z^\ast}(ab^2a^2)$
and $\hat\eta_{Z^\ast}(w)=\hat\eta_{Z^\ast}(a)$.
We conclude that $\hat\eta_{Z^\ast}(H)=K$,
thus $Z$ is $E$-charged.
Applying Theorem~\ref{t:group-code-isomor-maximal-subgroups},
we deduce that $d_E(X)=3$ and that $G_E(X)$ is
isomorphic to~$S_3$; see Figure~\ref{fig:second-automaton}
on page~\pageref{fig:second-automaton}.
\begin{figure}[htp]
    \centering
\scalebox{.9}{
\tiny
\begin{tikzpicture}[>=latex,line join=bevel,scale=0.48]
  \pgfsetlinewidth{1pt}
\pgfsetcolor{black}
  \pgfsetcolor{gray}
  \draw [] (416.5bp,75.995bp) .. controls (416.5bp,58.983bp) and (416.5bp,42.759bp)  .. (416.5bp,31.82bp);
  \draw [] (416.5bp,685.98bp) .. controls (416.5bp,671.4bp) and (416.5bp,656.71bp)  .. (416.5bp,642.19bp);
  \draw [] (416.5bp,1122.3bp) .. controls (416.5bp,1112bp) and (416.5bp,1096.4bp)  .. (416.5bp,1078.1bp);
  \draw [] (416.5bp,271.67bp) .. controls (416.5bp,256.45bp) and (416.5bp,241.69bp)  .. (416.5bp,228.03bp);
  \draw [] (416.5bp,1194.5bp) .. controls (416.5bp,1181.7bp) and (416.5bp,1162.8bp)  .. (416.5bp,1150bp);
\pgfsetcolor{black}
  \pgfsetlinewidth{0.4pt}
\begin{scope}
  \draw (416.5bp,18bp) node {*0};
  \draw (408bp,5bp) -- (428bp,5bp) -- (428bp,31bp)
  -- (408bp,31bp) -- cycle;
\end{scope}
\begin{scope}
  \draw (28bp,625.8bp) node[right] {*$a^2ba$};
  \draw (38bp,604.8bp) node[right] {$aba$};
  \draw (111.5bp,625.8bp) node[right] {*$abab$};
  \draw (108.5bp,606.8bp) node[right] {$a^2bab$};
  \draw (192.5bp,625.8bp) node[right] {$a^2bab^3$};
  \draw (199.5bp,606.8bp) node[right] {$abab^3$};
  \draw (297bp,625.8bp) node[right] {$abab^2a$};
  \draw (290.5bp,606.8bp) node[right] {$a^2bab^2a$};
  \draw (401.5bp,625.8bp) node[right] {$abab^2a^2b$};
  \draw (408bp,606.8bp) node[right] {$abab^3ab$};
  \draw (526.5bp,625.8bp) node[right] {$abab^2a^2$};
  \draw (533bp,606.8bp) node[right] {$abab^3a$};
  \draw (644.5bp,625.8bp) node[right] {$abab^2$};
  \draw (637.5bp,606.8bp) node[right] {$a^2bab^2$};
  \draw (742bp,625.8bp) node[right] {$abab^2ab$};
  \draw (735.5bp,606.8bp) node[right] {$a^2bab^2ab$};
  \draw (31bp,579.8bp) node[right] {*$baba$};
  \draw (28bp,560.8bp) node[right] {$ba^2ba$};
  \draw (105bp,579.8bp) node[right] {$ba^2bab$};
  \draw (111.5bp,558.8bp) node[right] {$babab$};
  \draw (192.5bp,579.8bp) node[right] {*$babab^3$};
  \draw (189bp,560.8bp) node[right] {$ba^2bab^3$};
  \draw (287bp,579.8bp) node[right] {$ba^2bab^2a$};
  \draw (293.5bp,560.8bp) node[right] {$babab^2a$};
  \draw (401bp,579.8bp) node[right] {*$babab^3ab$};
  \draw (398bp,558.8bp) node[right] {$babab^2a^2b$};
  \draw (529.5bp,579.8bp) node[right] {$babab^3a$};
  \draw (523bp,560.8bp) node[right] {$babab^2a^2$};
  \draw (634bp,579.8bp) node[right] {$ba^2bab^2$};
  \draw (641bp,560.8bp) node[right] {$babab^2$};
  \draw (732bp,579.8bp) node[right] {$ba^2bab^2ab$};
  \draw (738.5bp,560.8bp) node[right] {$babab^2ab$};
  \draw (21bp,533.8bp) node[right] {$b^3a^2ba$};
  \draw (28bp,512.8bp) node[right] {$b^3aba$};
  \draw (101.5bp,533.8bp) node[right] {*$b^3abab$};
  \draw (98bp,512.8bp) node[right] {$b^3a^2bab$};
  \draw (182bp,533.8bp) node[right] {$b^3a^2bab^3$};
  \draw (189bp,512.8bp) node[right] {$b^3abab^3$};
  \draw (283.5bp,533.8bp) node[right] {*$b^3abab^2a$};
  \draw (280bp,512.8bp) node[right] {$b^3a^2bab^2a$};
  \draw (391bp,533.8bp) node[right] {$b^3abab^2a^2b$};
  \draw (398bp,512.8bp) node[right] {$b^3abab^3ab$};
  \draw (516bp,533.8bp) node[right] {$b^3abab^2a^2$};
  \draw (523bp,512.8bp) node[right] {$b^3abab^3a$};
  \draw (634bp,533.8bp) node[right] {$b^3abab^2$};
  \draw (627bp,512.8bp) node[right] {$b^3a^2bab^2$};
  \draw (732bp,533.8bp) node[right] {$b^3abab^2ab$};
  \draw (725bp,512.8bp) node[right] {$b^3a^2bab^2ab$};
  \draw (14.5bp,487.8bp) node[right] {$ba^2b^2aba$};
  \draw (21bp,466.8bp) node[right] {$bab^3aba$};
  \draw (89.5bp,487.8bp) node[right] {*$bab^3abab$};
  \draw (91.5bp,466.8bp) node[right] {$ba^2b^2abab$};
  \draw (175.5bp,487.8bp) node[right] {$ba^2b^2abab^3$};
  \draw (182.5bp,466.8bp) node[right] {$bab^3abab^3$};
  \draw (280.5bp,487.8bp) node[right] {$bab^3abab^2a$};
  \draw (273.5bp,466.8bp) node[right] {$ba^2b^2abab^2a$};
  \draw (384.5bp,487.8bp) node[right] {$ba^2b^2abab^3ab$};
  \draw (377.5bp,466.8bp) node[right] {$ba^2b^2abab^2a^2b$};
  \draw (509.5bp,487.8bp) node[right] {$ba^2b^2abab^3a$};
  \draw (502.5bp,466.8bp) node[right] {$ba^2b^2abab^2a^2$};
  \draw (627.5bp,487.8bp) node[right] {$bab^3abab^2$};
  \draw (620.5bp,466.8bp) node[right] {$ba^2b^2abab^2$};
  \draw (725.5bp,487.8bp) node[right] {$bab^3abab^2ab$};
  \draw (718.5bp,466.8bp) node[right] {$ba^2b^2abab^2ab$};
  \draw (24.5bp,441.8bp) node[right] {$ab^2aba$};
  \draw (18bp,420.8bp) node[right] {$ab^2a^2ba$};
  \draw (95bp,441.8bp) node[right] {$ab^2a^2bab$};
  \draw (101.5bp,420.8bp) node[right] {$ab^2abab$};
  \draw (182.5bp,441.8bp) node[right] {*$ab^2abab^3$};
  \draw (179bp,420.8bp) node[right] {$ab^2a^2bab^3$};
  \draw (273.5bp,441.8bp) node[right] {*$ab^2a^2bab^2a$};
  \draw (284bp,420.8bp) node[right] {$ab^2abab^2a$};
  \draw (395bp,441.8bp) node[right] {$ab^2abab^3ab$};
  \draw (388bp,420.8bp) node[right] {$ab^2abab^2a^2b$};
  \draw (520bp,441.8bp) node[right] {$ab^2abab^3a$};
  \draw (513bp,420.8bp) node[right] {$ab^2abab^2a^2$};
  \draw (624bp,441.8bp) node[right] {$ab^2a^2bab^2$};
  \draw (631bp,420.8bp) node[right] {$ab^2abab^2$};
  \draw (722bp,441.8bp) node[right] {$ab^2a^2bab^2ab$};
  \draw (729bp,420.8bp) node[right] {$ab^2abab^2ab$};
  \draw (28bp,395.8bp) node[right] {$b^2aba$};
  \draw (21bp,374.8bp) node[right] {$b^2a^2ba$};
  \draw (98bp,395.8bp) node[right] {$b^2a^2bab$};
  \draw (105bp,374.8bp) node[right] {$b^2abab$};
  \draw (189bp,395.8bp) node[right] {$b^2abab^3$};
  \draw (182bp,374.8bp) node[right] {$b^2a^2bab^3$};
  \draw (280bp,395.8bp) node[right] {$b^2a^2bab^2a$};
  \draw (287bp,374.8bp) node[right] {$b^2abab^2a$};
  \draw (398bp,395.8bp) node[right] {$b^2abab^3ab$};
  \draw (391bp,374.8bp) node[right] {$b^2abab^2a^2b$};
  \draw (519.5bp,395.8bp) node[right] {*$b^2abab^3a$};
  \draw (516bp,374.8bp) node[right] {$b^2abab^2a^2$};
  \draw (627bp,395.8bp) node[right] {$b^2a^2bab^2$};
  \draw (630.5bp,374.8bp) node[right] {*$b^2abab^2$};
  \draw (725bp,395.8bp) node[right] {$b^2a^2bab^2ab$};
  \draw (728.5bp,374.8bp) node[right] {*$b^2abab^2ab$};
  \draw (18bp,349.8bp) node[right] {$a^2b^2aba$};
  \draw (24.5bp,328.8bp) node[right] {$ab^3aba$};
  \draw (101.5bp,349.8bp) node[right] {$ab^3abab$};
  \draw (95bp,328.8bp) node[right] {$a^2b^2abab$};
  \draw (179bp,349.8bp) node[right] {$a^2b^2abab^3$};
  \draw (186bp,328.8bp) node[right] {$ab^3abab^3$};
  \draw (284bp,349.8bp) node[right] {$ab^3abab^2a$};
  \draw (277bp,328.8bp) node[right] {$a^2b^2abab^2a$};
  \draw (388bp,349.8bp) node[right] {$a^2b^2abab^3ab$};
  \draw (381bp,328.8bp) node[right] {$a^2b^2abab^2a^2b$};
  \draw (509.5bp,349.8bp) node[right] {*$a^2b^2abab^3a$};
  \draw (506bp,328.8bp) node[right] {$a^2b^2abab^2a^2$};
  \draw (627.5bp,349.8bp) node[right] {*$ab^3abab^2$};
  \draw (624bp,328.8bp) node[right] {$a^2b^2abab^2$};
  \draw (729bp,349.8bp) node[right] {$ab^3abab^2ab$};
  \draw (722bp,328.8bp) node[right] {$a^2b^2abab^2ab$};
  \draw (21bp,303.8bp) node[right] {$bab^2aba$};
  \draw (14.5bp,282.8bp) node[right] {$bab^2a^2ba$};
  \draw (91.5bp,303.8bp) node[right] {$bab^2a^2bab$};
  \draw (98bp,282.8bp) node[right] {$bab^2abab$};
  \draw (182.5bp,303.8bp) node[right] {$bab^2abab^3$};
  \draw (175.5bp,282.8bp) node[right] {$bab^2a^2bab^3$};
  \draw (273.5bp,303.8bp) node[right] {$bab^2a^2bab^2a$};
  \draw (280.5bp,282.8bp) node[right] {$bab^2abab^2a$};
  \draw (391.5bp,303.8bp) node[right] {$bab^2abab^3ab$};
  \draw (384.5bp,282.8bp) node[right] {$bab^2abab^2a^2b$};
  \draw (516.5bp,303.8bp) node[right] {$bab^2abab^3a$};
  \draw (509.5bp,282.8bp) node[right] {$bab^2abab^2a^2$};
  \draw (620.5bp,303.8bp) node[right] {$bab^2a^2bab^2$};
  \draw (624bp,282.8bp) node[right] {*$bab^2abab^2$};
  \draw (718.5bp,303.8bp) node[right] {$bab^2a^2bab^2ab$};
  \draw (718bp,282.8bp) node[right] {*$bab^2abab^2ab$};
  \draw (9bp,272bp) -- (9bp,642bp) -- (825bp,642bp) --
  (825bp,272bp) -- cycle;
  \draw (9bp,318.25bp) -- (825bp,318.25bp);
  \draw (9bp,364.5bp) -- (825bp,364.5bp);
  \draw (9bp,410.75bp) -- (825bp,410.75bp);
  \draw (9bp,457bp) -- (825bp,457bp);
  \draw (9bp,503.25bp) -- (825bp,503.25bp);
  \draw (9bp,549.5bp) -- (825bp,549.5bp);
  \draw (9bp,595.75bp) -- (825bp,595.75bp);
  \draw (92bp,272bp) -- (92bp,642bp);
  \draw (175bp,272bp) -- (175bp,642bp);
  \draw (268bp,272bp) -- (268bp,642bp);
  \draw (375bp,272bp) -- (375bp,642bp);
  \draw (497bp,272bp) -- (495bp,642bp);
  \draw (614bp,272bp) -- (614bp,642bp);
  \draw (716bp,272bp) -- (716bp,642bp);
\end{scope}
\begin{scope}
  \draw (70bp,211.8bp) node[right] {*$abab^2aba$};
  \draw (177.5bp,211.8bp) node[right] {$abab^2a^2ba$};
  \draw (298bp,211.8bp) node[right] {$abab^2abab$};
  \draw (412.5bp,211.8bp) node[right] {$abab^2abab^2$};
  \draw (536bp,211.8bp) node[right] {$abab^2abab^2a$};
  \draw (666bp,211.8bp) node[right] {$abab^2abab^2ab$};
  \draw (67bp,186.8bp) node[right] {$a^2bab^2aba$};
  \draw (180.5bp,186.8bp) node[right] {*$abab^3aba$};
  \draw (288bp,186.8bp) node[right] {*$a^2bab^2abab$};
  \draw (405.5bp,186.8bp) node[right] {$a^2bab^2abab^2$};
  \draw (529.5bp,186.8bp) node[right] {$a^2bab^2abab^2a$};
  \draw (659.5bp,186.8bp) node[right] {$a^2bab^2abab^2ab$};
  \draw (70bp,161.8bp) node[right] {$babab^2aba$};
  \draw (170.5bp,161.8bp) node[right] {*$babab^2a^2ba$};
  \draw (294.5bp,161.8bp) node[right] {$babab^2abab$};
  \draw (405.5bp,161.8bp) node[right] {*$babab^2abab^2$};
  \draw (532.5bp,161.8bp) node[right] {$babab^2abab^2a$};
  \draw (659bp,161.8bp) node[right] {*$babab^2abab^2ab$};
  \draw (63.5bp,136.8bp) node[right] {$b^2abab^2aba$};
  \draw (167bp,136.8bp) node[right] {$b^2abab^2a^2ba$};
  \draw (284.5bp,136.8bp) node[right] {*$b^2abab^2abab$};
  \draw (402bp,136.8bp) node[right] {$b^2abab^2abab^2$};
  \draw (522.5bp,136.8bp) node[right] {*$b^2abab^2abab^2a$};
  \draw (656bp,136.8bp) node[right] {$b^2abab^2abab^2ab$};
  \draw (52.5bp,111.8bp) node[right] {$bab^2abab^2aba$};
  \draw (161.5bp,111.8bp) node[right] {$bab^2abab^2a^2ba$};
  \draw (274.5bp,111.8bp) node[right] {*$bab^2abab^2abab$};
  \draw (397.5bp,111.8bp) node[right] {$bab^2abab^2abab^2$};
  \draw (519.5bp,111.8bp) node[right] {$bab^2abab^2abab^2a$};
  \draw (649.5bp,111.8bp) node[right] {$bab^2abab^2abab^2ab$};
  \draw (60bp,86.8bp) node[right] {$ab^2abab^2aba$};
  \draw (164bp,86.8bp) node[right] {$ab^2abab^2a^2ba$};
  \draw (284.5bp,86.8bp) node[right] {$ab^2abab^2abab$};
  \draw (395.5bp,86.8bp) node[right] {*$ab^2abab^2abab^2$};
  \draw (519.5bp,86.8bp) node[right] {*$ab^2abab^2abab^2a$};
  \draw (653bp,86.8bp) node[right] {$ab^2abab^2abab^2ab$};
  \draw (50bp,75bp) -- (50bp,228bp) -- (788bp,228bp) --
  (788bp,75bp) -- cycle;
  \draw (162bp,75bp) -- (162bp,228bp);
  \draw (278bp,75bp) -- (278bp,228bp);
  \draw (399bp,75bp) -- (399bp,228bp);
  \draw (519bp,75bp) -- (519bp,228bp);
  \draw (651bp,75bp) -- (651bp,228bp);
  \draw (50bp,100.5bp) -- (788bp,100.5bp);
  \draw (50bp,126bp) -- (788bp,126bp);
  \draw (50bp,151.5bp) -- (788bp,151.5bp);
  \draw (50bp,177bp) -- (788bp,177bp);
  \draw (50bp,201.5bp) -- (788bp,201.5bp);
\end{scope}
\begin{scope}
  \draw (416.5bp,1136bp) node {b};
  \draw (408bp,1123bp) -- (428bp,1123bp) -- (428bp,1149bp) --
  (408bp,1149bp) -- cycle;
\end{scope}
\begin{scope}
  \draw (328.5bp,1060.8bp) node[right] {*$b^4a$};
  \draw (325bp,1039.8bp) node[right] {$b^3a^2$};
  \draw (325bp,1018.8bp) node[right] {$b^4a^2$};
  \draw (332bp,997.8bp) node[right] {$b^3a$};
  \draw (332bp,976.8bp) node[right] {$b^2a$};
  \draw (325bp,955.8bp) node[right] {$b^2a^2$};
  \draw (382.5bp,1060.8bp) node[right] {$b^2a^2b^2$};
  \draw (399bp,1039.8bp) node[right] {*$b^3$};
  \draw (402.5bp,1018.8bp) node[right] {$b^4$};
  \draw (382.5bp,997.8bp) node[right] {$b^2a^2b^3$};
  \draw (382.5bp,976.8bp) node[right] {$b^3a^2b^2$};
  \draw (402.5bp,955.8bp) node[right] {$b^2$};
  \draw (460bp,1060.8bp) node[right] {$b^4a^2b$};
  \draw (463.5bp,1039.8bp) node[right] {*$b^3ab$};
  \draw (467bp,1018.8bp) node[right] {$b^4ab$};
  \draw (460bp,997.8bp) node[right] {$b^3a^2b$};
  \draw (460bp,976.8bp) node[right] {$b^2a^2b$};
  \draw (467bp,955.8bp) node[right] {$b^2ab$};
  \draw (332bp,930.8bp) node[right] {*$a^2$};
  \draw (329bp,909.8bp) node[right] {$ab^2a$};
  \draw (342bp,888.8bp) node[right] {$a$};
  \draw (322bp,867.8bp) node[right] {$ab^2a^2$};
  \draw (322bp,846.8bp) node[right] {$a^2b^2a$};
  \draw (329bp,825.8bp) node[right] {$ab^3a$};
  \draw (396bp,930.8bp) node[right] {*$ab^4$};
  \draw (392.5bp,909.8bp) node[right] {$a^2b^3$};
  \draw (392.5bp,888.8bp) node[right] {$a^2b^4$};
  \draw (399.5bp,867.8bp) node[right] {$ab^2$};
  \draw (399.5bp,846.8bp) node[right] {$ab^3$};
  \draw (392.5bp,825.8bp) node[right] {$a^2b^2$};
  \draw (477bp,930.8bp) node[right] {$ab$};
  \draw (457bp,909.8bp) node[right] {$ab^2a^2b$};
  \draw (470.5bp,888.8bp) node[right] {$a^2b$};
  \draw (464bp,867.8bp) node[right] {$ab^2ab$};
  \draw (464bp,846.8bp) node[right] {$ab^3ab$};
  \draw (457bp,825.8bp) node[right] {$a^2b^2ab$};
  \draw (338.5bp,800.8bp) node[right] {$ba$};
  \draw (318.5bp,779.8bp) node[right] {$ba^2b^2a$};
  \draw (332bp,758.8bp) node[right] {$ba^2$};
  \draw (325.5bp,737.8bp) node[right] {$bab^3a$};
  \draw (325.5bp,716.8bp) node[right] {$bab^2a$};
  \draw (318.5bp,695.8bp) node[right] {$bab^2a^2$};
  \draw (389bp,800.8bp) node[right] {$ba^2b^4$};
  \draw (392.5bp,779.8bp) node[right] {*$bab^3$};
  \draw (396bp,758.8bp) node[right] {$bab^4$};
  \draw (389bp,737.8bp) node[right] {$ba^2b^2$};
  \draw (389bp,716.8bp) node[right] {$ba^2b^3$};
  \draw (396bp,695.8bp) node[right] {$bab^2$};
  \draw (467bp,800.8bp) node[right] {$ba^2b$};
  \draw (457bp,779.8bp) node[right] {*$bab^3ab$};
  \draw (473.5bp,758.8bp) node[right] {$bab$};
  \draw (453.5bp,737.8bp) node[right] {$ba^2b^2ab$};
  \draw (453.5bp,716.8bp) node[right] {$bab^2a^2b$};
  \draw (460.5bp,695.8bp) node[right] {$bab^2ab$};
  \draw (312bp,685bp) -- (312bp,1075bp) -- (525bp,1075bp) --
  (525bp,685bp) -- cycle;
  \draw (383bp,685bp) -- (383bp,1075bp);
  \draw (454bp,685bp) -- (454bp,1075bp);
  \draw (312bp,815bp) -- (525bp,815bp);
  \draw (312bp,945bp) -- (525bp,945bp);
\end{scope}
\begin{scope}
  \draw (416.5bp,1208bp) node {*1};
  \draw (408bp,1195bp) -- (428bp,1195bp) -- (428bp,1221bp) --
  (408bp,1221bp) -- cycle;
\end{scope}
\end{tikzpicture}
}
\caption{Example~\ref{eg:naive-sofic-example}: egg-box diagram of $\Syn(X^\ast)$, a monoid with $221$ elements.}
      \label{fig:second-automaton}
  \end{figure}
  The computations of Figures~\ref{fig:first-automaton} and~\ref{fig:second-automaton} were carried out using \pv{GAP}~\cite{GAP4:2013,Delgado&Linton&Morais:automata,Delgado&Morais:sgpviz}.
\end{Example}

In the case where $F$ is uniformly recurrent,
we have the following variation of
Theorem~\ref{t:group-code-isomor-maximal-subgroups}.
A finite semigroup is said to be \emph{nil-simple}
if all its idempotents lie in the minimum ideal.

\begin{Thm}\label{t:uniformly-recurr-version-group-code-isomor-maximal-subgroups}
  Let $F$ be a uniformly recurrent subset of $A^*$.
  Suppose that $Z$ is a rational bifix code of finite degree such that
  $\eta_{Z^\ast}(A^+)$ is nil-simple, and let $X=Z\cap F$.
  The following conditions are equivalent:
  \begin{enumerate}
  \item $Z$ is $F$-charged\label{item:uni-rec-vers-1};
  \item $G_F(X)\simeq G(Z)$ and
    $X$ is weakly $F$-charged\label{item:uni-rec-vers-2};
  \item $|G_F(X)|=|G(Z)|$ and $X$ is weakly $F$-charged\label{item:uni-rec-vers-3}.
  \end{enumerate}  
\end{Thm}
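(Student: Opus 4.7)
The plan is to reduce $(1) \Rightarrow (2) \Rightarrow (3)$ to Theorem~\ref{t:group-code-isomor-maximal-subgroups} and to exploit the nil-simplicity hypothesis in a new way for the reverse implication $(3) \Rightarrow (1)$. Since $F$ is uniformly recurrent, the intersection $X = Z \cap F$ is finite by Theorem~\ref{t:thin-code-intersects-F}, hence rational, so Theorem~\ref{t:group-code-isomor-maximal-subgroups} is directly applicable. Its equivalences give $(1) \Rightarrow (2)$ (even with the bonus $d_F(X) = d(Z)$), while $(2) \Rightarrow (3)$ is immediate.

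For $(3) \Rightarrow (1)$, I would fix a maximal subgroup $H$ of $J(F)$ with idempotent $e$, and denote by $H_X$ and $H_Z$ the maximal subgroups of $\Syn(X^*)$ and $\Syn(Z^*)$ containing $\hat\eta_{X^*}(H)$ and $\hat\eta_{Z^*}(H)$, respectively. The first step is to exploit nil-simplicity: since $\eta_{Z^*}(A^+)$ is an ideal of the finite monoid $\Syn(Z^*)$, its minimum ideal coincides with $J(Z)$, and nil-simplicity places every idempotent of $\eta_{Z^*}(A^+)$ inside $J(Z)$. As $e \in J(F) \subseteq \widehat{A^+}$, this yields $\hat\eta_{Z^*}(e) \in J(Z)$, whence $\hat\eta_{Z^*}(H) \subseteq H_Z$ with $H_Z$ a maximal subgroup of $J(Z)$; in particular $|H_Z| = |G(Z)|$.

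The second step is to observe that the elements of $H$ are forbidden in $\overline X$. Indeed, $X$ being finite gives $\overline X = X$, and $J(F)$ consists of infinite pseudowords (as $F$ is uniformly recurrent), which cannot be factors of the finite words in $X$. Therefore Corollary~\ref{c:isomorphism-between-images} produces a surjective homomorphism from $\hat\eta_{Z^*}(H)$ onto $\hat\eta_{X^*}(H)$. Combining this with the weakly $F$-charged assumption $\hat\eta_{X^*}(H) = H_X$ and with $|G_F(X)| = |G(Z)|$ yields the chain
\[
|H_Z| = |G(Z)| = |G_F(X)| = |H_X| \leq |\hat\eta_{Z^*}(H)| \leq |H_Z|,
\]
which forces $\hat\eta_{Z^*}(H) = H_Z$, i.e., $Z$ is $F$-charged. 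The main obstacle is precisely this $(3) \Rightarrow (1)$ direction, because Theorem~\ref{t:group-code-isomor-maximal-subgroups} would additionally require the degree equality $d_F(X) = d(Z)$, which is dropped from the present statement. The nil-simplicity hypothesis, via the ideal structure of $\eta_{Z^*}(A^+)$ together with the homomorphism provided by Corollary~\ref{c:isomorphism-between-images}, is exactly what compensates for the missing equality, allowing the cardinality sandwich to close.
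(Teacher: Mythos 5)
Your proposal is correct and follows essentially the same route as the paper's proof: reduce $(1)\Rightarrow(2)\Rightarrow(3)$ to Theorem~\ref{t:group-code-isomor-maximal-subgroups}, then for $(3)\Rightarrow(1)$ use nil-simplicity to place $H_Z$ inside $J(Z)$, use finiteness of $X$ to get forbiddenness of $H$ in $\overline{X}=X$, and apply the surjection $\hat\eta_{Z^\ast}(H)\twoheadrightarrow\hat\eta_{X^\ast}(H)$ from Corollary~\ref{c:isomorphism-between-images} together with the cardinality hypothesis to force $\hat\eta_{Z^\ast}(H)=H_Z$. The paper phrases the last step as constructing a homomorphism $\beta\colon H_Z\to H_X$ and showing it is an isomorphism, while you close with a cardinality sandwich, but these are the same argument.
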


As examples of rational bifix codes $Z$
such that $\eta_{Z^\ast}(A^+)$ is nil-simple,
we have the finite maximal bifix codes~(cf.~\cite[Theorem 11.5.2]{Berstel&Perrin&Reutenauer:2010}) and the group codes. There
are rational maximal bifix codes such that $\eta_{Z^\ast}(A^+)$
is not nil-simple~(cf.~\cite[Example 11.5.3]{Berstel&Perrin&Reutenauer:2010}).

\begin{proof}[Proof of Theorem~\ref{t:uniformly-recurr-version-group-code-isomor-maximal-subgroups}]  
  Implication~\eqref{item:uni-rec-vers-1}$\Rightarrow$\eqref{item:uni-rec-vers-2} holds by Theorem~\ref{t:group-code-isomor-maximal-subgroups},
  and implication~\eqref{item:uni-rec-vers-2}$\Rightarrow$\eqref{item:uni-rec-vers-3} is trivial. Suppose that \eqref{item:uni-rec-vers-3} holds.
  Take a maximal subgroup $H$ of $J(F)$.
  Let $H_X=\hat\eta_{X^\ast}(H)$
  and denote by $H_Z$ the maximal subgroup of $\Syn(Z^\ast)$
  containing $\hat\eta_{Z^\ast}(H)$.
  Note that $H_Z\subseteq J(Z)$, because
  the semigroup $\eta_{Z^\ast}(A^+)=\hat\eta_{Z^\ast}(\widehat {A^+})$ is nil-simple.
  Since $X$ is finite (cf.~Theorem~\ref{t:thin-code-intersects-F}),
  the elements of $H$ are forbidden in $\overline{X}=X$.
  As seen in the proof of Corollary~\ref{c:isomorphism-between-images},
  it follows from Proposition~\ref{p:fundamental-syntactic-inequalities}\eqref{item:fundamental-syntactic-inequalities-2} that there is a homomorphism $\beta\colon H_Z\to H_X$
  such that the diagram
   \begin{equation*}
    \xymatrix{
      H\ar[r]^{\hat\eta_{Z^\ast}}\ar[d]_{\hat\eta_{X^\ast}}&H_Z\ar[dl]^\beta\\
      H_X&
    }
  \end{equation*}
  commutes.
  Notice that $\beta$ is onto, as
  $H_X=\hat\eta_{X^\ast}(H)$.
  Therefore, $\beta$ is an isomorphism, because $|H_X|=|H_Z|$.
  It follows that $H_Z=\hat\eta_{Z^\ast}(H)$.
  This establishes \eqref{item:uni-rec-vers-3}$\Rightarrow$\eqref{item:uni-rec-vers-1}, concluding the proof of the theorem.
\end{proof}

\section{$F$-groups as permutation groups}

Let the pair $(X,G)$ denote a faithful right action of a group $G$ on a
set~$X$. We also say that $(X,G)$ is a \emph{permutation group}
because this action induces an injective homomorphism
from $G$ into the symmetric group of permutations on $X$ acting on the right.
The cardinal of $X$ is the \emph{degree} of the permutation group $(X,G)$.
Two permutation groups $(X,G)$ and $(Y,H)$ are said to be \emph{equivalent}
if there is a pair $(f,\varphi)$ formed by a bijection
$f\colon X\to Y$ and an isomorphism $\varphi\colon G\to H$
such that $f(x\cdot g)=f(x)\cdot\varphi(g)$, for every $x\in X$ and $g\in G$.
We then say that $(f,\varphi)\colon (X,G)\to (Y,H)$ is an equivalence.

In this section, we show that, loosely speaking,
if the conditions of Theorem~\ref{t:group-code-isomor-maximal-subgroups}
are satisfied, then, in a natural manner, the maximal subgroups
of $J_F(X)$ and $J(Z)$ are equivalent
permutation groups.

The following lemma will be useful to describe the bijection
on the sets where the groups act. Recall that $i_X$
is the initial state and unique final state of
the minimal automaton $\mathcal M_{X^\ast}$
of $X^\ast$, whenever $X$ is a prefix code.

\begin{Lemma}\label{l:action-of-the-group}
  Let $F$ be a recurrent subset of $A^\ast$.
  Suppose that $Z$
  is a rational bifix code of finite degree,
  and that the intersection $X=Z\cap F$ is
  also rational.
  Consider an $\mathcal L$-class $K$ of $J(F)$.
  For every $u,v\in K$, we have
  \begin{equation}\label{eq:action-of-the-group-0}
    i_X\cdot u=i_X\cdot v\Leftrightarrow
    i_Z\cdot u=i_Z\cdot v.
  \end{equation}
\end{Lemma}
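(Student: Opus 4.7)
My plan is to reduce the lemma to a transfer of decompositions between $u$ and $v$ using their $\mathcal{L}$-equivalence in $J(F)$. First, since $u, v \in K$ lie in a regular $\mathcal{J}$-class, $K$ contains an idempotent $e$ with $ue = u$ and $ve = v$, and Proposition~\ref{p:idempotents-are-in-the-closure-of-Xast} gives $e \in \overline{X^*} \subseteq \overline{Z^*}$, so that $i_X \cdot e = i_X$ and $i_Z \cdot e = i_Z$. The $\mathcal{L}$-equivalence also provides $t \in \widehat{A^*}$ with $v = tu$. Reformulating via residuals, $i_X \cdot u = i_X \cdot v$ is equivalent to $u^{-1}\overline{X^*} = v^{-1}\overline{X^*}$, and similarly for $Z$; since $\overline{X^*} \subseteq \overline{Z^*}$, the trivial direction in each residual equality is free, and the proof reduces to transferring, for each $w \in \widehat{A^*}$, the membership $uw \in \overline{X^*}$ (respectively $\overline{Z^*}$) to $vw \in \overline{X^*}$ (respectively $\overline{Z^*}$) under the appropriate hypothesis.

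I would carry out each transfer in the style of the proof of Proposition~\ref{p:fundamental-syntactic-inequalities}: approximate $u = \lim u_n$, $v = \lim v_n$ with $u_n, v_n \in F$ and $w = \lim w_n$ with $w_n \in A^*$; invoke the openness of $\overline{X^*}$ and $\overline{Z^*}$ (rationality of $X^*$ and $Z^*$) to obtain $u_n w_n \in X^*$ or $Z^*$ for large $n$; and exploit the prefix-code structure of $X$ (respectively $Z$) to factor $u_n = \alpha_n \beta_n$ with $\alpha_n$ the longest $X^*$- (respectively $Z^*$-) prefix of $u_n$ and $\beta_n$ a proper prefix of some code element. Passing to limits produces a decomposition $u = \alpha \beta$ with $\alpha \in \overline{X^*}$, $\beta \in \overline{F}$, and $\beta w$ in the relevant closed submonoid. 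The transfer to $v$ then comes from combining $v = tu$, the idempotent $e$, the right-unitarity of $\overline{X^*}$ and $\overline{Z^*}$ (Proposition~\ref{p:propositionUnitary}), and the identity $\overline{Z^*} \cap \overline{F} = \overline{X^*} \cap \overline{F}$, which follows from $Z^* \cap F = X^* \cap F$ together with the openness of $\overline{Z^*}$.

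The principal obstacle is the transfer step itself: Proposition~\ref{p:fundamental-syntactic-inequalities} supplies an analogous conclusion at the level of syntactic inequalities, but requires the framing idempotents of the factorizations to be forbidden in $\overline{Z}$ or $\overline{X}$, an assumption unavailable in this generality. I expect the critical work to lie in engineering, through the common $\mathcal{L}$-class idempotent $e$ and the $\mathcal{L}$-equivalence witness $t$, a pairing of the ``$X^*$-part'' of a decomposition of $u$ with an analogous decomposition of $v$ that relies only on $u \mathrel{\mathcal{L}} v$ in $J(F)$ and on the membership $u, v \in \overline{F}$.
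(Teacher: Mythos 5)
There is a genuine gap, and you have located it yourself: the ``transfer step'' that you defer to future engineering is the entire content of the lemma, and the route you set up makes it harder than it needs to be. By recasting $i_X\cdot u=i_X\cdot v$ as an equality of residuals you commit yourself to handling an arbitrary right context $w$, i.e.\ to transferring $uw\in\overline{X^\ast}\Leftrightarrow vw\in\overline{X^\ast}$ into the corresponding statement for $Z$; that is essentially the syntactic-order transfer of Proposition~\ref{p:fundamental-syntactic-inequalities}, which, as you note, needs the framing idempotents to be forbidden in $\overline{Z}$ or $\overline{X}$ --- a hypothesis absent here. Your proposed decomposition of $u_n$ into a longest $X^\ast$-prefix times a remainder does not obviously circumvent this, and no mechanism is given for pairing the decompositions of $u$ and of $v=tu$. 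Also, the claim that one implication is ``free'' from $\overline{X^\ast}\subseteq\overline{Z^\ast}$ is unsubstantiated: neither direction of \eqref{eq:action-of-the-group-0} follows from that inclusion alone.

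The paper's proof never quantifies over right contexts. Using regularity of $J(F)$, write $u=uu'u$ with $u'\in J(F)$ and $v=tu$ with $t\in J(F)$. Assuming $i_Z\cdot u=i_Z\cdot v$, multiply on the right by $u'$ to get $i_Z\cdot uu'=i_Z\cdot tuu'$. The element $uu'$ is an idempotent of $J(F)\subseteq\overline{F}$, so Proposition~\ref{p:idempotents-are-in-the-closure-of-Xast} places it in $\overline{X^\ast}$ and in $\overline{Z^\ast}$; hence the equation collapses to $i_Z=i_Z\cdot tuu'$, i.e.\ $tuu'\in\overline{Z^\ast}$, and left-unitarity of $\overline{Z^\ast}$ (Proposition~\ref{p:propositionUnitary}) strips off $uu'$ to give $t\in\overline{Z^\ast}$. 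Since $t\in\overline{F}$, one gets $t\in\overline{X^\ast}$ (this is the one ingredient you did identify, $\overline{Z^\ast}\cap\overline{F}\subseteq\overline{X^\ast}$), whence $i_X\cdot v=i_X\cdot tu=i_X\cdot u$; the converse is symmetric. The decisive idea missing from your outline is this right-multiplication by $u'$ to complete $u$ to an idempotent lying in both closed submonoids, which reduces the whole lemma to deciding membership of the single element $t$ rather than to a transfer of syntactic inequalities.
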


\begin{proof}
  Because $J(F)$ is regular,
  we may take $u'\in J(F)$ such that $u=uu'u$.
  And since $u\mathrel{\mathcal L}v$,
  there is $t\in J(F)$ such that $v=tu$.

  Suppose that $i_Z\cdot u=i_Z\cdot v$. Then we have
  \begin{equation}
    \label{eq:action-of-the-group-1}
    i_Z\cdot uu'=i_Z\cdot vu'=i_Z\cdot tuu'.
  \end{equation}
  Note that $uu'$ is idempotent.
  Therefore, we
  have $uu'\in\overline{Z^*}$ by 
  Proposition~\ref{p:idempotents-are-in-the-closure-of-Xast}.
  Hence, \eqref{eq:action-of-the-group-1} simplifies
  to $i_Z=i_Z\cdot tuu'$,
  whence $tuu'\in\overline{Z^*}$. Because $uu'\in\overline{Z^*}$, applying Proposition~\ref{p:propositionUnitary} we get
  $t\in\overline {Z^*}$.
   And since $t\in\overline{F}$ and $X$ is rational,
   we conclude that $t\in\overline {X^*}$.
  Therefore,
  we have $i_X=i_X\cdot t$, whence
  $i_X\cdot u=i_X\cdot tu=i_X\cdot v$.
  
  Conversely, suppose that $i_X\cdot u=i_X\cdot v$. Then we have
  \begin{equation}
    \label{eq:action-of-the-group-2}
    i_X\cdot uu'=i_X\cdot vu'=i_X\cdot tuu'.
  \end{equation}
  Since $uu'$ is an idempotent factor of $u\in J(F)$ and $\overline{F}$
  is factorial, we know that
  $uu'\in\overline{X^*}$ by
  Proposition~\ref{p:idempotents-are-in-the-closure-of-Xast}.
  Hence, \eqref{eq:action-of-the-group-2} simplifies
  to $i_X=i_X\cdot tuu'$.
  Therefore, we have $uu',tuu'\in\overline{X^*}$,
  thus $t\in\overline{X^*}$ by Proposition~\ref{p:propositionUnitary}.
  This implies $i_Z=i_Z\cdot t$, whence 
  $i_Z\cdot u=i_Z\cdot tu=i_Z\cdot v$.
\end{proof}

Consider the set $Q$ of states
of the minimal automaton $\mathcal M_L$
of $L$, where~$L$ is a rational language. For each element $s$ of the syntactic monoid
$\Syn(L)$, let
\begin{equation*}
  Q\cdot s=\{q\cdot s\mid q\in Q\}.
\end{equation*}
In a transformation monoid, $\mathcal L$-equivalent elements have the same image. In particular, in the cases of interest for us,
when $K$ is an $\mathcal L$-class or
a subgroup of $\Syn(L)$,
the set
\begin{equation*}
  Q\cdot K=\{q\cdot g\mid q\in Q\text{ and }g\in K\},
\end{equation*}
is such that $Q\cdot K=Q\cdot g$ for all $g\in K$.
Moreover, if $K$ is a subgroup, then $K$ acts faithfully as a permutation group on $Q\cdot K$.
As it is well known, if $K$ and $K'$ are maximal subgroups of $\Syn(L)$ contained in the same $\mathcal J$-class,
then $(Q\cdot K,K)$ and $(Q\cdot K',K')$
are equivalent permutation groups.

Suppose that $X$ is a rational code. We denote by $Q_X$ the set of
states of the minimal automaton of $\mathcal M_{X^*}$.
When $u\in\F A$, then $Q_X\cdot u$ denotes $Q_X\cdot \hat\eta_{X^*}(u)$,
and if $K$ is an $\mathcal L$-class or a subgroup of $\F A$,
then $Q_X\cdot K$ denotes $Q_X\cdot \hat\eta_{X^*}(K)$,
so that in particular we
have $Q_X\cdot K=Q_X\cdot u$ whenever $u\in K$.
Finally, we denote by $i_X\cdot K$ the set
$\{i_X\cdot u\mid u\in K\}$.

\begin{Lemma}\label{l:iH}
  Let $F$ be a recurrent subset of $A^*$, and let
  $X$ be a rational bifix code contained in $F$ and with finite $F$-degree. 
  If $K$ is an $\mathcal L$-class of $J(F)$, then the equality
    $Q_X\cdot K=i_X\cdot K$
  holds.
\end{Lemma}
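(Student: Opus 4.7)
The inclusion $i_X\cdot K\subseteq Q_X\cdot K$ is immediate: for every $v\in K$, since $\mathcal L$-equivalent pseudowords act with the same image in $\mathcal{M}_{X^*}$, one has $Q_X\cdot v=Q_X\cdot K$, and in particular $i_X\cdot v\in Q_X\cdot K$.

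For the reverse inclusion, fix $u\in K$ and let $r=q\cdot u$ be an arbitrary element of $Q_X\cdot u=Q_X\cdot K$. My aim is to produce $v\in K$ with $i_X\cdot v=r$. Since $\mathcal{M}_{X^*}$ is trim and $X$ is a prefix code inside the factorial set $F$, every state of $Q_X$ is of the form $i_X\cdot w$ for some proper prefix $w$ of a word of $X$, and any such $w$ necessarily lies in $F$. Picking $w\in F$ with $i_X\cdot w=q$, one has $r=i_X\cdot wu$. The difficulty is that the pseudoword $wu$ in general need not belong to $\overline F$, so one cannot take $v=wu$.

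The key tool I would use is that $\overline F$ inherits recurrence from $F$: applied to $w,u\in\overline F$, it provides some $c\in\overline F$ with $wcu\in\overline F$. Setting $v:=wcu$, the minimality of $J(F)$ inside $\overline F$ forces $v\in J(F)$ (because $v\leq_{\mathcal J}u$ and $v\in\overline F$), and stability of $\F A$ combined with $v\leq_{\mathcal L}u$ yields $v\mathrel{\mathcal L}u$, hence $v\in K$. This places $i_X\cdot v=q\cdot c\cdot u$ in $i_X\cdot K\cap Q_X\cdot K$.

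The main obstacle is reconciling this with the prescribed target $r=q\cdot u$: a priori the inserted connector $c$ may shift $q$ to a different state. I would handle this by exploiting the freedom in choosing $c$ and $w$. By Proposition~\ref{p:idempotents-are-in-the-closure-of-Xast} the idempotent $f$ of $K$, as well as every idempotent of $\overline F$, lies in $\overline{X^*}$ and in particular fixes $i_X$; thus one can compose $c$ with suitable idempotents of $\overline F$ (for instance $f$ or the $\mathcal R$-class idempotent of $u$) without leaving $\overline F$. Approximating $u$ by words $u_n\in F$ and invoking recurrence of $F$ to choose $c_n\in F$ with $wc_nu_n\in F$, the finiteness of $Q_X$ allows one, after passing to a subsequence, to require that $q\cdot c_n$ stabilizes; the stabilized value can then be forced to lie in the $u$-preimage of $r$, so that $q\cdot c\cdot u=r$ in the limit. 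This final coherence step, controlling $c$ so that $q\cdot c\cdot u$ lands exactly on $r$, is the delicate part of the argument.
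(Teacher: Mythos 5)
There is a genuine gap, and you have in fact located it yourself: the step ``the stabilized value can then be forced to lie in the $u$-preimage of $r$, so that $q\cdot c\cdot u=r$ in the limit'' is asserted but never argued. Recurrence of $F$ (or of $\overline F$) only guarantees the \emph{existence} of some connector $c$ with $wcu\in\overline F$; it gives you no control whatsoever over the state $q\cdot c$, and hence no way to steer $(q\cdot c)\cdot u$ onto the prescribed target $r=q\cdot u$. Passing to subsequences so that $q\cdot c_n$ stabilizes only tells you that \emph{some} element of $Q_X\cdot u$ is of the form $i_X\cdot v$ with $v\in K$ --- which is just the easy inclusion $i_X\cdot K\subseteq Q_X\cdot K$ again --- not that every element of $Q_X\cdot u$ is. Since $Q_X\cdot u$ has $d_F(X)$ elements and nothing in your argument bounds $|i_X\cdot K|$ from below, the reverse inclusion is exactly what remains unproved.

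The missing idea is Lemma~\ref{l:lemma-714-from-jalgebra-paper} (Lemma 7.1.4 of Berstel et al.): for a word $w\in F$ with $\delta_X(w)=d_F(X)$, \emph{every} state of $Q_X\cdot w$ equals $i_X\cdot v$ for a (unique) suffix $v$ of $w$ that is a proper prefix of $X$. The paper's proof approximates $u\in K$ by words $u_n\in F$, uses Lemma~\ref{l:degree-in-JF} and the continuity of $\delta_X$ (Proposition~\ref{p:the-parse-is-continuous}) to arrange $\delta_X(u_n)=d_F(X)$ and $q\cdot u_n=q\cdot u$, applies the cited lemma to write $q\cdot u=i_X\cdot v_n$ with $v_n$ a suffix of $u_n$, takes an accumulation point $v$ (a suffix of $u$), and finally replaces $v$ by $vu'u$ (where $u=uu'u$) to land in the $\mathcal L$-class $K$ while preserving $i_X\cdot vu'u=q\cdot uu'u=q\cdot u$. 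This ``realize the state by a suffix of $u$ itself'' mechanism is what your connector construction cannot replace.
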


Our proof of Lemma~\ref{l:iH} depends on the following
property. In its statement, by ``a prefix of $X$'' we mean
``a prefix of some element of $X$''.

\begin{Lemma}[{\cite[Lemma 7.1.4]{Berstel&Felice&Perrin&Reutenauer&Rindone:2012}}]\label{l:lemma-714-from-jalgebra-paper}
  Let $F$ be a recurrent subset of $A^*$, and let
  $X$ be a bifix code contained in $F$ and with finite $F$-degree $d$.
  If $w$ is a word of $F$ such that $\delta_X(w)=d$,
  then, for every $q\in Q_X\cdot w$ there is a unique proper
  prefix $v$ of~$X$ which is a suffix of $w$, and such that
  $q=i_X\cdot v$.
\end{Lemma}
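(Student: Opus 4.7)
The plan is to parametrize the states of $\mathcal{M}_{X^*}$ by proper prefixes of $X$ and then perform a length-comparison argument whose only nontrivial input is the absence of internal-factor occurrences granted by Theorem~\ref{t:characterization-of-finite-degree-codes}. Since $X$ is a prefix code, $X^*$ is right unitary, and so every word $s$ that labels a path starting at $i_X$ in $\mathcal{M}_{X^*}$ admits a unique factorization $s = zv$ with $z\in X^*$ and $v$ a proper prefix of some element of $X$ (with the convention that $v=1$ is allowed, since $1\notin X$). The correspondence $v\mapsto i_X\cdot v$ is then a bijection between the set of proper prefixes of $X$ and the state set $Q_X$, and for any prolongable $s$ the target state $i_X\cdot s$ equals $i_X\cdot v$, where $v$ comes from this canonical factorization of $s$.

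Given $q\in Q_X\cdot w$, I would write $q = i_X\cdot u\cdot w = i_X\cdot uw$ for some proper prefix $u$ of $X$ making $uw$ prolongable in $\mathcal{M}_{X^*}$, and take the canonical factorization $uw = zv$ with $z\in X^*$ and $v$ a proper prefix of $X$, so that $q = i_X\cdot v$. Uniqueness of $v$ as a proper prefix of $X$ satisfying $q = i_X\cdot v$ is immediate from the bijection above, so the task reduces to showing that $v$ is a suffix of $w$.

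Either $|v|\leq|w|$, in which case $v$ is already a suffix of $w$ and we are done, or $|v|>|w|$, which I would rule out as follows. In the second case $w$ is a suffix of $v$, so I write $v = u''w$ with $u''$ a nonempty suffix of $u$ and $u = u'u''$; the equality $uw = zv$ then forces $z = u'$. But $u'\in X^*$ is a prefix of $u$, itself a proper prefix of some $x\in X$; if $u'$ were nonempty, writing $u' = x_1\cdots x_k$ with $x_1\in X$ would make $x_1$ a prefix of $x$, forcing $x_1 = x$ by the prefix-code property, while $|x_1|\leq|u'|\leq|u|<|x|$, a contradiction. Hence $u' = 1$, so $z = 1$ and $uw = v$ is a proper prefix of some $x\in X$, say $x = uwr$ with $r\neq 1$. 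Combined with $u\neq 1$ (forced by $|v|>|w|$), this exhibits $w$ as an internal factor of $x\in X$, which by Theorem~\ref{t:characterization-of-finite-degree-codes} is incompatible with the hypothesis $\delta_X(w) = d_F(X) = d$.

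The main obstacle, such as it is, is the length-comparison step in the last paragraph; everything before is a routine unpacking of the states of $\mathcal{M}_{X^*}$ using right-unitarity of $X^*$, and the punchline is the interaction between the prefix-code property and the characterization of the $F$-degree via internal factors.
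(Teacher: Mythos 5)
A preliminary remark: the paper does not prove this lemma itself; it imports it verbatim from \cite[Lemma 7.1.4]{Berstel&Felice&Perrin&Reutenauer&Rindone:2012}, so your proposal has to be judged on its own merits rather than against an in-paper argument. Your existence argument is correct and is the standard one: the canonical factorization $uw=zv$ with $z\in X^*$ and $v$ a proper prefix of $X$, followed by the length comparison that converts the case $|v|>|w|$ into an internal-factor occurrence of $w$ in an element of $X$, contradicting $\delta_X(w)=d$ via Theorem~\ref{t:characterization-of-finite-degree-codes}.

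The uniqueness part, however, has a genuine gap. The map $v\mapsto i_X\cdot v$ from proper prefixes of $X$ to $Q_X$ is surjective but \emph{not} injective in general: $Q_X$ is the state set of the \emph{minimal} automaton, which identifies two proper prefixes $v,v'$ whenever $v^{-1}(X^*)=v'^{-1}(X^*)$. (It is a bijection only for the literal automaton whose states are the proper prefixes themselves.) For instance, for the bifix code $X=A^2$ over $A=\{a,b\}$, which satisfies the hypotheses with $F=A^*$ and $d=2$, the minimal automaton of $X^*$ has two states while $X$ has three proper prefixes, and $i_X\cdot a=i_X\cdot b$; taking $w=ab$ and $q=i_X\cdot a$, both $a$ and $b$ are proper prefixes of $X$ sent to $q$, and only the requirement that $v$ be a suffix of $w$ singles out $b$. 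So ``uniqueness is immediate from the bijection'' fails, and the suffix condition cannot be discarded. The uniqueness assertion in fact needs the suffix half of bifixity, which your argument never uses for this purpose. A correct argument runs as follows: suppose $v$ and $v'$ are proper prefixes of $X$, both suffixes of $w$ (hence comparable, say $v=tv'$), with $i_X\cdot v=i_X\cdot v'=q$. Choose $s\neq 1$ with $vs\in X$; then $i_X\cdot v's=i_X\cdot vs=i_X$, so $v's\in X^+$, and the last $X$-factor of $v's$ is an element of $X$ that is a suffix of $vs\in X$, hence equals $vs$ because $X$ is a suffix code; comparing lengths gives $t=1$, i.e.\ $v=v'$.
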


\begin{proof}[Proof of Lemma~\ref{l:iH}]
  Take $q\in Q_X$ and $u\in K$.
  Consider also an element $u'$ of $J(F)$ such that
  $u=uu'u$.  
  By Lemma~\ref{l:degree-in-JF},
  we have $\delta_X(u)=d_F(X)$.
Consider a sequence $(u_n)_n$ of elements of
$F$ converging in $\F A$ to $u$.
Taking subsequences,
we may as well suppose
that $\hat\eta_{X^\ast}(u_n)=\hat\eta_{X^\ast}(u)$
and $\delta_X(u_n)=\delta_X(u)=d_F(X)$
for all $n$,
respectively  because
of the continuity of $\hat\eta_{X^\ast}$
and of $\delta_X$ (cf.~Proposition~\ref{p:the-parse-is-continuous}).
In particular, we have $q\cdot u_n=q\cdot u$ for all $n$.
Moreover, by Lemma~\ref{l:lemma-714-from-jalgebra-paper}, for each $n$ there is a unique proper prefix $v_n$
of $X$ such that $v_n$ is a suffix of $u_n$
and $q\cdot u_n=i_X\cdot v_n$.
Let $v$ be an accumulation point of the sequence $(v_n)_n$.
Then,
$v$ is a suffix of $u$
and $q\cdot u=i_X\cdot v$.
Multiplying by $u'u$,
we get $q\cdot u=i_X\cdot vu'u$.
Since $v$ is a suffix of $u$,
we know that $vu'u$ is a suffix of $uu'u=u$,
and so $vu'u$ and $u$ are $\mathcal L$-equivalent.
This shows that $q\cdot u\in i_X\cdot K$, thus establishing
the equality $Q_X\cdot K=i_X\cdot K$.
\end{proof}

We are ready for the main result of this section. 
Note that, in view
of Theorem~\ref{t:group-code-isomor-maximal-subgroups},
the group $H_X$ in the next statement
is indeed a maximal subgroup of $J_F(X)$.

\begin{Thm}\label{t:equivalence-of-permutation-groups}
  Consider a recurrent subset $F$ of $A^*$.
  Suppose that $Z$
  is an $F$-charged rational bifix code of finite degree $d$. Let $X=Z\cap F$ and suppose
  that $X$ is rational.
  Let $H$ be a maximal subgroup of $J(F)$.
  Consider the maximal
  subgroup $H_Z=\hat\eta_{Z^\ast}(H)$ of $J(Z)$
  and the maximal subgroup $H_X=\hat\eta_{X^\ast}(H)$
  of $J_F(X)$.
  Denoting by $\mathcal L(H)$ the $\mathcal L$-class
  of $J(F)$ containing~$H$,
  take the correspondence $f$ from $Q_X\cdot H_X$ to $Q_Z\cdot H_Z$
  defined by
  \begin{equation*}
    f(i_X\cdot u)=i_Z\cdot u,\quad\forall u\in \mathcal L(H),
  \end{equation*}
  together with the unique isomorphism $\alpha\colon H_X\to H_Z$ such that
  the diagram
  \begin{equation*}
    \xymatrix{
      H\ar[r]^{\hat\eta_{Z^\ast}}\ar[d]_{\hat\eta_{X^\ast}}&H_Z\\
      H_X\ar[ur]_\alpha&
    }
  \end{equation*}
  commutes. Then $(f,\alpha)\colon (Q_X\cdot H_X,H_X)\to (Q_Z\cdot H_Z,H_Z)$
  is an equivalence between permutation groups
  of degree $d$.
\end{Thm}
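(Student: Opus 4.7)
The plan is to verify in turn that the underlying sets admit clean descriptions in terms of the $\mathcal{L}$-class $\mathcal{L}(H)$ of~$H$, that $f$ is a bijection, that $(f,\alpha)$ is equivariant, and that the common degree equals~$d$. The existence and uniqueness of the isomorphism $\alpha$ come from Theorem~\ref{t:injective-homomorphism-maximal-subgroups} once we know that the elements of $J(F)$ are forbidden in $\overline{Z}$; this in turn follows from $F$-chargedness via the equivalent condition $d_F(X)=d(Z)=d$ of Theorem~\ref{t:group-code-isomor-maximal-subgroups} together with Proposition~\ref{p:equal-degree-imlies-forbidden}.

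The central step, and the main delicate point of the argument, is to establish the equalities $Q_X\cdot H_X=i_X\cdot\mathcal{L}(H)$ and $Q_Z\cdot H_Z=i_Z\cdot\mathcal{L}(H)$. For the first, it suffices to check $Q_X\cdot H_X=Q_X\cdot\mathcal{L}(H)$ and then apply Lemma~\ref{l:iH}. The inclusion $\supseteq$ is immediate from $H\subseteq\mathcal{L}(H)$; conversely, since the continuous surjective homomorphism $\hat\eta_{X^\ast}$ preserves $\mathcal{L}$-equivalence, $\hat\eta_{X^\ast}(\mathcal{L}(H))$ is contained in the $\mathcal{L}$-class of $H_X$ in $\Syn(X^\ast)$, and $\mathcal{L}$-equivalent elements of a transformation monoid have identical image sets. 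The equality for~$Z$ is obtained in the same way.

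With the two identifications in hand, Lemma~\ref{l:action-of-the-group} applied to $K=\mathcal{L}(H)$ states precisely that $f$ is both well-defined and injective, and surjectivity is automatic from the description of the codomain. For equivariance, given $q=i_X\cdot u$ with $u\in\mathcal{L}(H)$ and $g=\hat\eta_{X^\ast}(h)$ with $h\in H$, an easy check (using $u\mathrel{\mathcal{L}}e\mathrel{\mathcal{L}}h$, where $e$ is the idempotent of~$H$) gives $uh\in\mathcal{L}(H)$, so that
\[
f(q\cdot g)=f(i_X\cdot uh)=i_Z\cdot uh=(i_Z\cdot u)\cdot\hat\eta_{Z^\ast}(h)=f(q)\cdot\alpha(g),
\]
where the final equality uses the defining diagram for $\alpha$.

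It remains to show both permutation groups have degree~$d$. For any $u\in H$ and any sequence $(u_n)$ in $F$ converging to~$u$, continuity of $\hat\eta_{X^\ast}$ forces $\hat\eta_{X^\ast}(u_n)=\hat\eta_{X^\ast}(u)\in H_X\subseteq J_F(X)$ eventually, so $|Q_X\cdot u|=|Q_X\cdot u_n|=d_F(X)$ by Proposition~\ref{p:d-parses-implies-you-are-in-JFX}. The analogous argument for~$Z$ (with $F$ replaced by the recurrent set~$A^\ast$, noting that $\hat\eta_{Z^\ast}(H)\subseteq J(Z)$ holds by Proposition~\ref{p:equal-degree-imlies-forbidden}) yields $|Q_Z\cdot u|=d(Z)$, and both values equal~$d$ by Theorem~\ref{t:group-code-isomor-maximal-subgroups}.
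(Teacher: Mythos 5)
Your proposal is correct and follows essentially the same route as the paper's proof: the identifications $Q_X\cdot H_X=i_X\cdot\mathcal L(H)$ and $Q_Z\cdot H_Z=i_Z\cdot\mathcal L(H)$ via Lemma~\ref{l:iH}, well-definedness and injectivity of $f$ via Lemma~\ref{l:action-of-the-group}, the degree count via Proposition~\ref{p:d-parses-implies-you-are-in-JFX} combined with $d_F(X)=d(Z)=d$ from Theorem~\ref{t:group-code-isomor-maximal-subgroups}, the isomorphism $\alpha$ from Theorem~\ref{t:injective-homomorphism-maximal-subgroups}, and the same equivariance computation. The only differences are cosmetic: you spell out the continuity approximation needed to transfer Proposition~\ref{p:d-parses-implies-you-are-in-JFX} from words of $F$ to pseudowords of $H$, and you get surjectivity of $f$ directly from the description of the codomain rather than from the cardinality count.
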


\begin{proof}
  Since
  $Q_X\cdot H=Q_X\cdot \mathcal L(H)$
  and
  $Q_Z\cdot H=Q_Z\cdot \mathcal L(H)$,
  the correspondence $f$ is well defined and injective
  by the combination of
  Lemmas~\ref{l:action-of-the-group} and~\ref{l:iH}.
  According to Proposition~\ref{p:d-parses-implies-you-are-in-JFX},
  we have $|Q_X\cdot H|=d_F(X)$ and $|Q_Z\cdot H|=d(Z)$.
  Applying Theorem~\ref{t:group-code-isomor-maximal-subgroups},
  we get that $|Q_X\cdot H|=|Q_Z\cdot H|=d$, establishing
  in particular that $f$ is a bijection.
  The existence and uniqueness of the isomorphism $\alpha$
  also follows from Theorem~\ref{t:group-code-isomor-maximal-subgroups}
  and its proof, based on Theorem~\ref{t:injective-homomorphism-maximal-subgroups}. Finally,
  let $q\in Q_X\cdot H$ and $g\in H_X$.
  Then, there are $u\in \mathcal L(H)$ and $v\in H$ such that
  $q=i_X\cdot u$ and $g=\hat\eta_{X^*}(v)$, the former because
  of Lemma~\ref{l:iH}.
  Noting that we also have $uv\in\mathcal L(H)$,
  we then have
  \begin{equation*}
    f(q)\cdot \alpha(g)
    =(i_Z\cdot u)\cdot\hat\eta_{Z^*}(v)
    =i_Z\cdot uv
    = f(i_X\cdot uv)
    =f(q\cdot g),
  \end{equation*}  
 thus concluding the proof.
\end{proof}

As an example of application of Theorem~\ref{t:equivalence-of-permutation-groups},
we next deduce a version of Corollary~\ref{t:equal-degree-connected-set-group-code} for permutation groups. Recall that a permutation group
$(Y,G)$ is \emph{transitive}
when, for all $x,y\in Y$, there is $g\in G$ such that $x\cdot g=y$.
In the following statement, we consider the natural action
of a maximal subgroup of $\Syn(X^*)$ on the minimal automaton
of $X^*$.

\begin{Cor}\label{c:transitivity-of-GFX}
  Consider a group code $Z$ of $A^*$ and let $F$
  be a uniformly recurrent connected set with alphabet $A$.
  Take the intersection~\mbox{$X=Z\cap F$}.
  The permutation groups
  $G(Z)$ and $G_F(X)$ are isomorphic transitive
  permutation groups of degree~$d(Z)$.
\end{Cor}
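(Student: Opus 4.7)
The plan is to assemble \textbf{Corollary~\ref{c:transitivity-of-GFX}} as a direct consequence of Proposition~\ref{p:group-codes-are-connected-charged} and Theorem~\ref{t:equivalence-of-permutation-groups}, once the two permutation groups $(Q_Z\cdot H_Z, H_Z)$ and $(Q_X\cdot H_X, H_X)$ appearing in the latter theorem are identified with the natural permutation representations of $G(Z)$ and $G_F(X)$, respectively. The only extra ingredient that needs argument is the transitivity, which comes entirely from the group-code side and is then transported across the equivalence.

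First I would apply Proposition~\ref{p:group-codes-are-connected-charged} to conclude that $Z$ is $F$-charged. Combined with Corollary~\ref{t:equal-degree-connected-set-group-code}, this gives $d_F(X)=d(Z)$, so all the hypotheses of Theorem~\ref{t:equivalence-of-permutation-groups} are satisfied with $d=d(Z)$. Choosing any maximal subgroup $H$ of $J(F)$ and setting $H_X=\hat\eta_{X^*}(H)$, $H_Z=\hat\eta_{Z^*}(H)$, Theorem~\ref{t:equivalence-of-permutation-groups} provides an equivalence of permutation groups
\begin{equation*}
(f,\alpha)\colon (Q_X\cdot H_X,H_X)\longrightarrow (Q_Z\cdot H_Z,H_Z)
\end{equation*}
of degree $d(Z)$.

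Next I would observe that, because $Z$ is a group code, its minimal automaton is a group automaton: every letter of $A$ acts on $Q_Z$ as a permutation, hence so does every element of $\Syn(Z^*)$. Consequently $\Syn(Z^*)$ coincides with its minimum ideal $J(Z)$, which in turn equals its unique maximal subgroup $G(Z)=H_Z$; and since each element of $H_Z$ acts as a bijection on $Q_Z$, one has $Q_Z\cdot H_Z=Q_Z$, a set of cardinality $d(Z)$. The permutation group $(Q_Z\cdot H_Z,H_Z)$ is therefore, by construction, the natural permutation representation of $G(Z)$ on the state set of $\mathcal M_{Z^*}$. Transitivity is immediate: given $q\in Q_Z$, the automaton is trim, so there exists $w\in A^*$ with $i_Z\cdot w=q$, and since $\hat\eta_{Z^*}(w)\in H_Z$ (as $\Syn(Z^*)=H_Z$), this shows that $H_Z$ acts transitively on $Q_Z$.

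By definition, $(Q_X\cdot H_X,H_X)$ is the natural permutation representation of the maximal subgroup $H_X$ of $J_F(X)$ acting on $Q_X\cdot H_X$; so it is a model for the permutation group $G_F(X)$. The equivalence $(f,\alpha)$ then transfers the transitivity established for $(Q_Z,H_Z)$ to $(Q_X\cdot H_X,H_X)$ and yields the stated isomorphism of permutation groups of degree $d(Z)$. I do not anticipate a real obstacle here: the only subtle point is to verify cleanly that $(Q_Z\cdot H_Z,H_Z)$ is indeed the natural action of $G(Z)$ on $Q_Z$ (which reduces to the remark that for a group code every element of $\Syn(Z^*)$ is a permutation of $Q_Z$), and that the trim property of $\mathcal M_{Z^*}$ delivers transitivity. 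Everything else is a direct invocation of the machinery already assembled in Sections 4 and 5.
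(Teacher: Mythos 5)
Your proposal is correct and follows essentially the same route as the paper: establish that $Z$ is $F$-charged via Proposition~\ref{p:group-codes-are-connected-charged}, invoke Theorem~\ref{t:equivalence-of-permutation-groups} (whose hypotheses are met thanks to Theorem~\ref{t:group-code-isomor-maximal-subgroups}), note that $G(Z)=\Syn(Z^*)$ acts transitively on the trim minimal automaton of $Z^*$, and transport transitivity across the equivalence. The extra detail you supply about identifying $(Q_Z\cdot H_Z,H_Z)$ with the natural action of $G(Z)$ on $Q_Z$ is a harmless elaboration of what the paper states in one line.
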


\begin{proof}
  The group $G(Z)$ acts transitively on the
  minimal automaton of $Z^*$
  as a permutation group of degree $d(Z)$, because the minimal automaton
  of $Z^*$ is trim and $G(Z)$ is the syntactic monoid of $Z^*$.
  Since transitivity is preserved by equivalence of permutation groups, the result follows immediately
  from Theorem~\ref{t:equivalence-of-permutation-groups},
  together with Proposition~\ref{p:group-codes-are-connected-charged}
  and Theorem~\ref{t:group-code-isomor-maximal-subgroups}.
\end{proof}

In general, for an arbitrary $F$-maximal bifix code $X$ with
$F$ uniformly recurrent, the natural action of $G_F(X)$ may not be transitive
(cf.~\cite[Example 7.2.1]{Berstel&Felice&Perrin&Reutenauer&Rindone:2012}).

We end this section with
a description of $F$-groups
as permutation groups defined by suitable sets of first return words,
under general conditions that
in particular apply to the setting
of Theorem~\ref{t:equivalence-of-permutation-groups}.

\begin{Prop}\label{p:description-of-permutation-groups}
  Consider a recurrent subset $F$ of $A^*$.
  Let $X$ be a weakly $F$-charged
  code of $A^*$.
  Let~$K$ be a maximal subgroup of~$J_F(X)$.
  Suppose that $u\in F$ is such that
  $\eta_{X^*}(u)$ is $\mathcal L$-equivalent
  to the elements of~$K$.
  For each $v\in R_F(u)$, let $\pi_v$
  be the permutation of $Q_X\cdot K$ resulting
  from the restriction to $Q_X\cdot K$ of
  the action of $v$ in the minimal automaton $\mathcal M_{X^*}$ of $X^*$.
  Then the group of permutations
  of $Q_X\cdot K$ generated by $\{\pi_v\mid v\in R_F(u)\}$
  is the group of permutations resulting
  from the restriction to $Q_X\cdot K$ of
  the action in $\mathcal M_{X^*}$ of the elements of~$K$.
\end{Prop}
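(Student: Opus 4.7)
The proof splits into two inclusions: $G:=\langle\pi_v\mid v\in R_F(u)\rangle\subseteq\widetilde K:=\{k|_{Q_X\cdot K}\mid k\in K\}$, and the reverse $\widetilde K\subseteq G$, both viewed as subgroups of the symmetric group on $Q_X\cdot K$.

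I would begin with the easier inclusion $G\subseteq\widetilde K$. Since $\eta_{X^\ast}(u)$ is $\mathcal L$-equivalent to the elements of $K$, Proposition~\ref{p:d-parses-implies-you-are-in-JFX} gives $\eta_{X^\ast}(u)\in J_F(X)$; as $\mathcal L$-equivalent elements of a finite transformation monoid share the same image, $Q_X\cdot u=Q_X\cdot K$. For $v\in R_F(u)$ one has $uv\in F\cap A^\ast u$; since $\eta_{X^\ast}(uv)\leq_{\mathcal J}\eta_{X^\ast}(u)\in J_F(X)$ and $\eta_{X^\ast}(uv)\in\eta_{X^\ast}(F)$, the minimality of $J_F(X)$ among $\mathcal J$-classes meeting $\eta_{X^\ast}(F)$ forces $\eta_{X^\ast}(uv)\in J_F(X)$, hence $Q_X\cdot uv=Q_X\cdot K$. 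The relation $uv\in A^\ast u$, combined with the rank equality $|Q_X\cdot uv|=|Q_X\cdot u|=d_F(X)$, forces $v$ to restrict to a bijection $\pi_v$ of $Q_X\cdot K$. Because the idempotent $\bar e$ of $K$ acts as the identity on $Q_X\cdot K=Q_X\cdot\bar e$, $\pi_v$ coincides with the restriction to $Q_X\cdot K$ of $\bar e\,\eta_{X^\ast}(v)\,\bar e$; this element lies in the local monoid $\bar e\,\Syn(X^\ast)\,\bar e$, whose group of units is $K$, so the fact that $\pi_v$ is a permutation forces $\bar e\,\eta_{X^\ast}(v)\,\bar e\in K$, yielding $\pi_v\in\widetilde K$.

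For the reverse inclusion $\widetilde K\subseteq G$, use the weakly $F$-charged hypothesis to fix a maximal subgroup $H$ of $J(F)$ with $\hat\eta_{X^\ast}(H)=K$. To realize a given $h\in K$ as a product of $\pi_v$'s, I would exhibit a word $w=\gamma u\in R_F(u)^\ast$ (where $u\gamma u\in F$, so that $\gamma u$ unambiguously decomposes through its successive occurrences of $u$ into a product $v_1\cdots v_k$ of elements of $R_F(u)$) such that the sandwich $\bar e\,\eta_{X^\ast}(w)\,\bar e$ equals $h$ in $K$. The composition identity $\pi_{v_1}\circ\cdots\circ\pi_{v_k}=(\eta_{X^\ast}(w))|_{Q_X\cdot K}$, established by induction using that each return word preserves $Q_X\cdot K$ and that $\bar e$ acts as the identity there, then delivers $h|_{Q_X\cdot K}$ in $G$. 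Such a $\gamma$ is to be produced via a profinite approximation: starting from a suitable pseudoword $\tilde g\in\F A$ with $u\tilde g u\in\overline F$ and with the correct syntactic image, Lemma~\ref{l:refine-factorization} yields a sequence of words $u\gamma_n u\in F$ with $\gamma_n\to\tilde g$, and the fact that $\Syn(X^\ast)$ is finite and discrete makes $\eta_{X^\ast}(\gamma_n u)$ stabilize to the target value.

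The main obstacle is the production of the pseudoword $\tilde g$ satisfying simultaneously the geometric condition $u\tilde g u\in\overline F$ and the algebraic condition that the associated sandwich produces $h$. The set $\{x\in\F A\mid uxu\in\overline F\}$ is closed in $\F A$ and meets $A^\ast$ by recurrence of $F$; my plan is to combine the structural analysis of $\overline F$ from Almeida--Costa~\cite{Almeida&ACosta:2007a,Almeida&ACosta:2013} (factoriality of $\overline F$ together with the sandwiching property asserting that for any $\alpha,\beta\in\overline F$ there is $\gamma\in\overline F$ with $\alpha\gamma\beta\in\overline F$) with the weakly $F$-charged hypothesis applied, via the Rees-matrix description of $J(F)$ and Green's-lemma style conjugation within $J(F)$, to the maximal subgroup of $J(F)$ whose image under $\hat\eta_{X^\ast}$ sits in the $\mathcal L$-class ``premultiplied'' into $K$ by $\eta_{X^\ast}(u)$. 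Varying this subgroup covers every $h\in K$, and once this density step is in place, the composition identity together with the continuity of $\hat\eta_{X^\ast}$ concludes the proof.
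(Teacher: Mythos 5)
Your first inclusion (that each $\pi_v$ is induced by an element of $K$) is essentially sound, and is a slightly different route from the paper's: the assertion that $\bar e\,\eta_{X^\ast}(v)\,\bar e$ must lie in the group of units of the local monoid is correct, but it needs the extra observation that $\bigl(\bar e\,\eta_{X^\ast}(v)\,\bar e\bigr)^{\omega}$, being a left multiple of $\bar e$ that restricts to the identity on $Q_X\cdot\bar e$, agrees with $\bar e$ on all of $Q_X$ and hence equals $\bar e$ by faithfulness; in a general transformation monoid an element of $\bar e\,\Syn(X^\ast)\,\bar e$ permuting $Q_X\cdot\bar e$ is not \emph{a priori} a unit of that local monoid, so this step should be spelled out. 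The paper argues instead through a maximal subgroup of $J(F)$ and the stability property.

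The reverse inclusion is where the real content lies, and your text explicitly leaves its key step as an unresolved ``main obstacle'' with only a plan. Two concrete problems. First, there need not exist a maximal subgroup $H$ of $J(F)$ with $\hat\eta_{X^\ast}(H)=K$: weak $F$-chargedness says that every maximal subgroup of $J(F)$ maps onto \emph{some} maximal subgroup of $J_F(X)$, not that every maximal subgroup of $J_F(X)$, in particular $K$, is attained. Second --- and this is the missing idea --- the paper chooses $H$ so that all of its elements admit $u$ as a suffix: writing an idempotent $e\in J(F)$ as $e=xuy$ (possible because $u\in F$ is a factor of every element of $J(F)$), the element $yexu$ is an idempotent of $J(F)$ ending in $u$, and one takes $H$ to be its maximal subgroup. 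Then $H_X=\hat\eta_{X^\ast}(H)$ is a maximal subgroup of $J_F(X)$ lying in the same $\mathcal L$-class as $K$, so it induces exactly the same permutations of $Q_X\cdot K$, and for every $w\in H$ one has $uw\in J(F)\subseteq\overline{F}$; approximating $uw$ by words of $F$ via Lemma~\ref{l:refine-factorization} produces a finite word $w'\in A^\ast u$ with $uw'\in F$ and $\eta_{X^\ast}(w')=\hat\eta_{X^\ast}(w)$, whence $w'\in R_F(u)^{+}$ and the permutation induced by $\hat\eta_{X^\ast}(w)$ is a product of the $\pi_v$. This single choice of $H$ resolves simultaneously the ``geometric'' and ``algebraic'' conditions you were trying to meet with your pseudoword $\tilde g$, and it is precisely what your proposal lacks.
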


\begin{proof}
  We claim that there is some maximal subgroup $H$ of $J(F)$
  such that $u$ is a suffix of the elements of $H$.
  Take an idempotent $e$ of $J(F)$.
  Since $u\in F$, there are $x,y\in\F A$ with $e=xuy$.
  From $e=(exuy)^3$ we get that both
  $yexu$ and $(yexu)^2$ are $\mathcal J$-equivalent to $e$,
  and so $yexu$ belongs to a maximal subgroup $H$ of~$J(F)$,
  establishing the claim.

  By the hypothesis
  that $X$ is weakly $F$-charged,
  the image $H_X=\hat\eta_{X^*}(H)$
  is a maximal subgroup of $J_F(X)$.
  Observe also that $K$ and $H_X$ are maximal subgroups contained in the
  same $\mathcal L$-class, namely the one that contains $\eta_{X^*}(u)$,
  and that therefore
  the permutations of $Q_X\cdot H_X$ induced by elements of $H_X$
  are precisely 
  the permutations of $Q_X\cdot K=Q_X\cdot H_X$ induced by elements of $K$.

  Let $v\in R_F(u)$.
  As $u$ is a suffix of $v$ and of the elements of $H$,
  and since $\eta_{X^*}(u)\in J_F(X)$, $uv\in F$ and $H_X\subseteq J(F)$,
    we know that $\eta_{X^*}(uv)$ and $\eta_{X^*}(u)$
    both belong to the $\mathcal L$-class of $\Syn(X^*)$ containing $H_X$.
    In particular,
    we have $Q_X\cdot uv=Q_X\cdot u=Q_X\cdot H_X$.
    Therefore, $(Q_X\cdot H_X)\cdot v=Q_X\cdot H_X$ holds.
    This means that $v$ acts as a permutation
    on the finite set $Q_X\cdot H_X$.
    Consider the idempotent $f$ in $H$.
    Then
    we have $\hat\eta_{X^*}(f)\mathrel{\mathcal L}\eta_{X^*}(u)$,
    whence $\hat\eta_{X^*}(fv)\mathrel{\mathcal L}\eta_{X^*}(uv)$.
    But we saw that we also have
    $\eta_{X^*}(u)\mathrel{\mathcal L}\eta_{X^*}(uv)$,
    and so $\hat\eta_{X^*}(f)\mathrel{\mathcal L}\hat\eta_{X^*}(fv)$.
    Since $\Syn(X^*)$ is stable, the latter implies
    that $\hat\eta_{X^*}(fv)$ belongs to the maximal subgroup containing
    $\hat\eta_{X^*}(f)$, that is, to $H_X$.
    Clearly, the action of $v$ in $Q_X\cdot H_X$ is the same
    as the action of $fv$ in $Q_X\cdot H_X$,
    and so the permutation $\pi_v$ of $Q_X\cdot H_X$
    induced by $v$ is a permutation induced by an element of $H_X$, namely
    $\hat\eta_{X^*}(fv)$.
    
    Conversely, let $h\in H_X$.
  Then $h=\hat\eta_{X^*}(w)$ for some $w\in H$.
  Because we also have $w^2\in H$ and the elements of $H$ have $u$ as a suffix,
  we know that $uw\in J(F)$. Hence $uw\in\overline{F}$,
  and  we may consider a sequence of words of $F$
  converging to~$uw$.
  Applying Lemma~\ref{l:refine-factorization}
  together with the continuity of
  $\hat\eta_{X^*}$, we conclude that
  there is some finite word $w'$ such that
  $h=\hat\eta_{X^*}(w)=\eta_{X^*}(w')$, $uw'\in F$ and $w'\in A^\ast u$.
  In particular, $w'\in R_F(u)^+$,
  and $h$ belongs
  to the subsemigroup of $\Syn(X^*)$ generated by $\eta_{X^*}(R_F(u))$.
   Therefore, the permutation on $Q_X\cdot H_X$ induced by
   $h$ belongs to the group of permutations
   of $Q_X\cdot H_X$ generated by
   $\{\pi_v\mid v\in R_F(u)\}$.
\end{proof}

\begin{Example}\label{eg:exampleDegree2-continuation}
  Let us go back to Example~\ref{eg:exampleDegree2}.
  In this example, we have $R_F(a)=\{a,ba\}$. The
  image of $a$ and $ba$ on the minimal automaton of~$X^*$,
  represented in Figure~\ref{figureAutomaton}, is the set $\{1,2\}$,
  and the set of corresponding
  permutations, respectively the transposition $(1,2)$ and
  the identity, generates the group or order $2$,
  as predicted by Proposition~\ref{p:description-of-permutation-groups}.
\end{Example}

\begin{Rmk}\label{eg:not-H-equivalent}
  In Proposition~\ref{p:description-of-permutation-groups},
  while $\{\pi_v\mid v\in R_F(u)\}$
  generates a group isomorphic to a maximal subgroup of $J_F(X)$,
  one may have distinct elements of $R_F(u)$
  being mapped into distinct $\mathcal H$-classes of $J_F(X)$:
  see Example~\ref{eg:exampleDegree2-continuation},
  where $\eta_{X^*}(a)$ and $\eta_{X^*}(ba)$
  are not $\mathcal H$-equivalent,
  as one can see in the egg-box diagram of Figure~\ref{figureAutomaton}.
\end{Rmk}

We end with an example
concerning an $F$-charged group code
such that $F$ is a non-connected uniformly
recurrent set.

\begin{Example}\label{eg:exampleThueMorse8}
  Let $A=\{a,b\}$.
  Consider the \emph{Prouhet-Thue-Morse substitution} $\tau\colon A^*\to A^*$,
  defined by $\tau(a)=ab$ and $\tau(b)=ba$,
  and let $F=F(\tau)$.
  The set $F$ is not connected: the extension graph of $aba$
  with respect to $F$ has two connected components,
  since $A^*abaA^*\cap F=\{aabab,babaa\}$.
  
  Let $h$ be the homomorphism
  $h:a\mapsto (123),b\mapsto (345)$ from $A^*$ onto the alternating
  group~$A_5$. It is shown in~\cite{Almeida&ACosta:2013}
  that its unique continuous extension
  $\hat h\colon \F A\to A_5$
  maps each maximal subgroup of $J(F)$
  onto $A_5$ (cf.~\cite[proof of Remark 7.8]{Almeida&ACosta:2013}). 

  Let $Z$ be the group code generating the submonoid of $A^*$ stabilizing $1$ via the action induced by $h$.
  This action describes the group automaton
  of~$Z^*$, with vertex set $\{1,2,3,4,5\}$
  and initial set $1$, and so~$h$ is precisely the syntactic homomorphism
  $\eta_{Z^*}$. Therefore, $Z$ is $F$-charged.
  It follows from Theorem~\ref{t:group-code-isomor-maximal-subgroups}
  that the
  $F$-maximal bifix code $X=Z\cap F$
  is weakly $F$-charged.
  The code $X$ is represented in
Figure~\ref{figureBifixDegree5Morse}.
We represent in Figure~\ref{figureBifixDegree5Morse}
only the nodes corresponding
to \emph{right special words}, that is, vertices with two sons.
\begin{figure}[hbt]
  \centering
{\tiny
  \begin{tikzpicture}[x=1.05mm,y=1.2mm,shorten >=1pt,on grid,semithick]
    \tikzstyle{every node}=[draw,rectangle,rounded corners,minimum size=0.8mm]
    \draw (0,10) node (1) {$1$};
    \draw (20,30) node (2) {$2$};
    \draw (20,0) node (b) {$1$};
    \draw (30,40) node (3) {$3$};
    \draw (30,20) node (4) {$4$};
    \draw (45,50) node (x1) {$1$};
    \draw (45,40) node (5) {$5$};
    \draw (45,25) node (6) {$6$};
    \draw (45,15) node (7) {$7$};
    \draw (60,50) node (x2) {$1$};
    \draw (60,40) node (x3) {$1$};
    \draw (60,33) node (x4) {$1$};
    \draw (60,25) node (8) {$8$};
    \draw (60,15) node (x9) {$1$};
    \draw (60,5) node (9) {$9$};
    \draw (80,30) node (10) {$10$};
    \draw (80,20) node (x8) {$1$};
    \draw (80,10) node (11) {$11$};
    \draw (80,0) node (x13) {$1$};
    \draw (100,40) node (x5) {$1$};
    \draw (100,30) node (12) {$12$};
    \draw (100,20) node (x10) {$1$};
    \draw (100,10) node (12b) {$12$};
    \draw (120,35) node (x6) {$1$};
    \draw (120,25) node (x7) {$1$};
    \draw (120,15) node (x11) {$1$};
    \draw (120,5) node (x12) {$1$};

    \tikzstyle{every node}=[inner sep=2pt]
    \path[-] (1) edge node [above left] {$a$} (2)
             (1) edge node [above] {$b$} (b)
             (2) edge node [above left] {$ab$} (3)
             (2) edge node [above right] {$b$} (4)
             (3) edge node [above left] {$abba$} (x1)
             (3) edge node [below] {$ba$} (5)
             (4) edge node [above left] {$a$} (6)
             (4) edge node [below] {$ba$} (7)
             (5) edge node [above left] {$aba$} (x2)
             (5) edge node [below] {$ba$} (x3)
             (6) edge node [above left] {$a$} (x4)
             (6) edge node [below] {$b^2a$} (8)
             (7) edge node [above] {$a$} (x9)
             (7) edge node [below left] {$\tau^2(b)$} (9)
             (8) edge node [above, inner sep=5pt] {$\tau^3(a)$} (10)
             (8) edge node [below] {$ba$} (x8)
             (9) edge node [above, inner sep=5pt] {$\tau^2(a)$} (11)
             (9) edge node [below] {$ba$} (x13)
             (10) edge node [above left] {$\tau^2(a)$} (x5)
             (10) edge node [below] {$\tau^3(b)$} (12)
             (11) edge node [above left] {$\tau^2(a)$} (x10)
             (11) edge node [below] {$\tau^2(bba)$} (12b)
             (12) edge node [above, inner sep=5pt] {$\tau^2(a)$} (x6)
             (12) edge node [below] {$\tau^2(b)a$} (x7)
             (12b) edge node [above, inner sep=5pt] {$\tau^2(a)$} (x11)
             (12b) edge node [below] {$\tau^2(b)a$} (x12);
           \end{tikzpicture}
         }
\caption{The bifix code $X$ from Example~\ref{eg:exampleThueMorse8}.}\label{figureBifixDegree5Morse}
\end{figure}
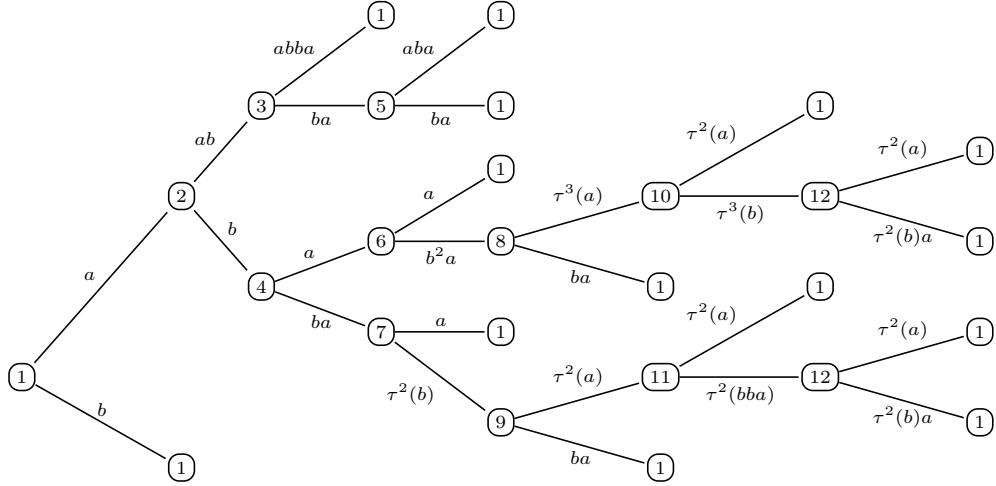
The image of $\tau^4(b)$ in $\mathcal M_{X^*}$ is $\{1,3,4,9,10\}$.
Since $\tau^4(b)$ has rank $5$ in $\mathcal M_{X^*}$
and $d_F(X)=d(Z)=5$ by Theorem~\ref{t:group-code-isomor-maximal-subgroups},
we know that the image of $\tau^4(b)$ in $\mathcal M_{X^*}$
belongs to the $F$-minimum $\mathcal J$-class $J_F(X)$.
The action
of $\tau^4(b)$ on its image is shown in Figure~\ref{figureMinimalImages}.
The return words to $\tau^4(b)$ are $\tau^4(b),\tau^3(a)$ and
$\tau^5(ab)$. The permutations on the image of $\tau^4(b)$ are the three
cycles of length~$5$ indicated in Figure~\ref{figureMinimalImages}.
They generate the group $G_F(X)=A_5=G(Z)$, in agreement with
Theorem~\ref{t:group-code-isomor-maximal-subgroups}
and Proposition~\ref{p:description-of-permutation-groups}.

\begin{figure}[hbt]
  \centering
  \begin{tikzpicture}[x=10mm,y=10mm,shorten >=1pt,on grid,semithick,
    node distance=5cm]
    \footnotesize
    \tikzstyle{every node}=[draw,rectangle,rounded corners,minimum size=0.8mm]
    \node (1) {$\{1,3,4,9,10\}$};
    \node (2) [right=of 1] {$\{1,2,7,8,12\}$};
    \tikzstyle{every node}=[]
    \path[->]
    (1) edge [loop left] node [left] {$\tau^4(b)\mid(1,9,10,3,4)$}
    (1) edge node [above] {$\tau^4(a)$} (2)
    (2) edge [bend right=30] node [above] {$\tau^3(a)\mid (1,10,9,3,4)$} (1)
    (2) edge [bend left=30]  node [below] {$\tau^4(b)\mid (1,10,9,4,3)$} (1);
  \end{tikzpicture}
\caption{The action on the image of $\tau^4(b)$.}\label{figureMinimalImages}
\end{figure}
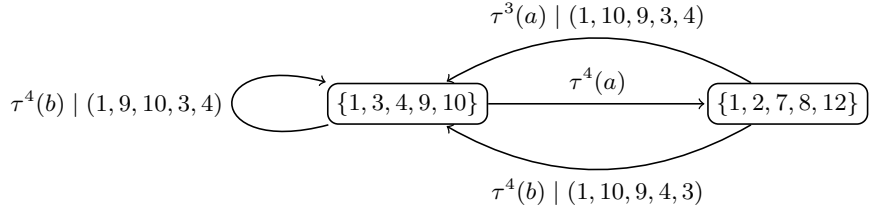

\end{Example}

\bibliographystyle{amsalpha}

\newcommand{\etalchar}[1]{$^{#1}$}
\providecommand{\bysame}{\leavevmode\hbox to3em{\hrulefill}\thinspace}
\providecommand{\MR}{\relax\ifhmode\unskip\space\fi MR }
\providecommand{\MRhref}[2]{%
  \href{http://www.ams.org/mathscinet-getitem?mr=#1}{#2}
}
\providecommand{\href}[2]{#2}

\end{document}